\documentclass[a4paper,11pt,reqno]{amsart}
\usepackage{enumitem}
\usepackage{amssymb,amsmath,amsfonts,amsthm}
\usepackage{mathrsfs}
\usepackage{amscd}
\usepackage[active]{srcltx}
\usepackage{verbatim}
\usepackage[colorlinks,linkcolor={blue},citecolor={blue},urlcolor={black}]{hyperref}
\usepackage[utf8]{inputenc}
\usepackage{xspace}
\usepackage{tikz}
\usetikzlibrary{backgrounds}
\usetikzlibrary{decorations.pathmorphing}
\usetikzlibrary{arrows}
\usetikzlibrary{shapes.geometric}
\usetikzlibrary{calc}
\usetikzlibrary{arrows.meta,bending,positioning}
\usepackage{pgfplots}
\pgfplotsset{compat=1.15}
\usepackage{fancyhdr}

\theoremstyle{plain}
\newtheorem{theorem}{Theorem}
\newtheorem{lemma}{Lemma}[section]

\newtheorem{proposition}[lemma]{Proposition}

\newtheorem{definition}[lemma]{Definition}

\newtheorem*{definition*}{Definition}

\theoremstyle{remark}
\newtheorem{remark}[lemma]{Remark}

\newtheorem*{claim*}{Claim}
\newtheorem*{remark*}{Remark}
\newtheorem*{example*}{Example}

\newtheorem*{notation*}{Notation}

\numberwithin{equation}{section}

\def\Z{{\mathbb Z}}
\def\R{{\mathbb R}}

\newcommand{\RR}{\mathbb R}

\newcommand{\RRd}{\mathbb{R}^d}
\newcommand{\TTd}{\mathbb T^d}
\newcommand{\TND}{\mathbb T_N^d}

\newcommand{\E}{{\mathbb E}}
\renewcommand{\P}{{\mathbb P}}
\newcommand{\PP}{\mathbb{P}}

\newcommand{\Ent}{{\rm Ent}}

\renewcommand{\phi}{\varphi}
\newcommand{\eps}{\varepsilon}
\newcommand{\dd}{\mathrm d}
\newcommand{\cI}{\mathcal I}
\newcommand{\cJ}{\mathcal J}
\newcommand{\cR}{\mathcal R}
\newcommand{\cL}{\mathcal L}

\newcommand{\cM}{\mathcal M}
\newcommand{\cX}{\mathcal X}

\newcommand{\cU}{\mathcal U}

\newcommand{\QQ}{\mathbb{Q}}
\newcommand{\EE}{\mathbb{E}}
\newcommand{\TT}{\mathbb{T}}
\newcommand{\ip}[1]{\left\langle #1\right\rangle}
\providecommand{\abs}[1]{\left\lvert#1\right\rvert}
\providecommand{\norm}[1]{\left\Vert#1\right\Vert}

\newcommand{\step}[1]{{\textbf{Step #1}}}

\makeatletter

\title{Pathological Large Deviations of the KMP Process in Dimension $d\ge 2$}
\author{Daniel Heydecker}
\address{}

\makeatother
\begin{document}

\begin{abstract}
We study dynamic large deviations for the Kipnis--Marchioro--Presutti process on the discrete torus $\TND$. By recasting the candidate rate function in terms of a linear hyperbolic-parabolic equation with rough drift, we show that pathological trajectories appear in the large deviations with finite rate in any dimension $d\ge 2$. Along the way, we rigorously validate the lower bound derived by Bertini-Gabrielli-Lebowitz in any dimension.
\end{abstract}

\maketitle

\tableofcontents

\section{Introduction \& Main Results}

The Kipnis--Marchioro--Presutti (KMP) process is a simple and canonical Markov process $\xi^N_t$ describing the conduction of heat through a solid \cite{kipnis1982heat}. This paper is dedicated to the problem of understanding the large deviations of $\xi^N_t$ away from \begin{equation}
	\label{eq:heat} \partial_tu=\frac12\Delta u; \qquad x\in \TTd, t\in [0,T],
\end{equation} which is an important step both in justifying the macroscopic fluctuation theory \cite{bertini2006non,bertini2015macroscopic,derrida2007non} as and understanding the formal gradient flow structures \cite{peletier2014large,adams2011large,adams2013large,duong2013wasserstein} associated to \eqref{eq:heat}. Starting from the formal gradient flow structure unveiled in \cite{bertini2005large}, we address the open question of how far the formal structure governs the large deviations, or whether other types of behaviour are possible. We refer to \cite{fehrman2019large,fehrman2021well,gess2023rescaled,fehrman2025matching} for a discussion of this issue in the context of the zero-range process, as well as \cite{heydecker2023large} for an example where the formal structure does not describe all trajectories. \\

The static large deviations are, by now, known, both in equilibrium \cite[Theorem 1]{peletier2014large} and for the non-equilibrium steady states coupled to heat baths \cite{bertini2005large,de2024hidden}. The starting point for the current work is the analysis of \cite{bertini2005large}, who formally showed that the dynamic cost associated to a trajectory $u$ is bounded above by $\mathcal{J}^{\rm FP}(u):=\frac12\int u(t,x)^2|\nabla H(t,x)|^2 dtdx$, whenever $u$ is a solution to the Fokker-Planck-like equation \begin{equation}\label{eq:FP-intro} \partial_tu(t,x)=\frac12\Delta u(t,x)-\nabla\cdot(u(t,x)^2\nabla H(t,x));\qquad H\in C^{1,2}_{t,x}.\end{equation} This equation stands in contrast to the analogous ones obtained in the contexts of exclusion-type processes \cite{kipnis1989hydrodynamics,quastel1999large} or the zero-range process (ZRP) \cite{benois1995large}, and the two fundamental hurdles become apparent at the level of this equation. \\\\The first difficulty is analytic. In contrast to the ZRP case, where both the diffusivity and mobility are governed by the same function, the diffusivity appearing in \eqref{eq:FP-intro} is linear, while the mobility multiplying the drift is quadratic. This suggests that, at the large deviation level, the fluctuations may compete with, or even overwhelm, the diffusive effects, despite the regularising properties of the macroscopic evolution \eqref{eq:heat}.  \\\\ The second obstruction is probabilistic. Existing proofs of dynamical lower bounds for conservative systems rely on a superexponential replacement principle \cite{kipnis1989hydrodynamics},\cite[Theorem 3.1] {kipnis1998scaling}, and in the context of the KMP model such an estimate is required for quadratic mobility, see also the discussions in \cite[Section 3.3]{bertini2005large}, \cite[Section VII]{peletier2014large}. Neither the exponential tails of the invariant product measures nor the Dirichlet-form methods available in the zero-range literature provide a superexponential estimate of the required strength; indeed, the analysis of \eqref{eq:FP-intro} below suggests that it is false in $d\ge 2$.  \\\\ The goal of the current work is to show that these obstacles are not merely technical hurdles, but rather fundamental. We will show that the balance between diffusivity and mobility, in particular, allows the construction of singular trajectories. As discussed in \cite{de2024hidden,kim2025spectral}, the KMP process is well-defined and physically reasonable in any physical dimension $d\ge 1$, and we therefore work at this level of generality; part of the surprise of the results is that the pathologies which appear are (strongly) dependent on the dimension. \\ \\ The arguments we develop are not unique to the KMP process, and we analyse the KMP process as a representative of similar processes, such as those defined in \cite{peletier2014large}. The key constructions take place already at the level of the limiting trajectories, and so the same conclusions would hold for other models where equations with the same form as either \eqref{eq:FP-intro} or its `skeletonised' equivalent \eqref{eq: sk} appear, such as other models of heat conduction in \cite{peletier2014large} or the stochastic diffusion-advection equation $$ \partial_t \rho=\frac12\Delta \rho-\sqrt{\epsilon}\nabla(\rho\circ \dot{W}^\delta); \qquad \rho\ge 0$$ for a space-time white noise $\dot{W}$, mollified on a spatial scale $\delta>0$, in a suitable joint limit $\epsilon, \delta\to 0$.  
\subsection{The KMP process \& A Putative Rate Function}

We consider the KMP process $\xi^N_t$, as in \cite{kipnis1982heat}, on the discrete torus $\TND = (N^{-1}\Z/\Z)^d$. Informally, at each  $x\in\TND$, we place a non-negative energy variable $\xi(x)\in[0,\infty)$, and on each edge a Poisson clock of rate $N^2$. When a clock on edge $\{x,y\}$ rings, the sites $x,y$ instantaneously thermalise: a sample $P$ is drawn from the uniform distribution on $[0,1]$, and the total energy between sites $x,y$ is redistributed according to $$ \xi(x)\mapsto P(\xi(x)+\xi(y));\qquad \xi(y)\mapsto (1-P)(\xi(x)+\xi(y)). $$
We defer a formal definition to Section \ref{sec:preliminaries}. To each configuration we associate an empirical measure by
\[
\pi^N(\xi) := \frac1{N^d}\sum_{x\in\TND}\xi(x)\,\delta_x.
\]
 The process admits a one-parameter family of invariant (equilibrium) distributions $\nu^N_\rho$, under which each component is independently, exponentially distributed with mean $\rho$. \\ \\ We next introduce the putative rate function on a forever fixed time interval $[0,T]$. For a scalar $\rho$ and a measure $\xi_0$, the static cost $S_\rho$ is given by  \cite[Theorem 1]{peletier2014large}: \begin{equation}\label{eq:srho}
 	S_\rho(\xi_0):=\left\langle \frac1\rho, \xi_0\right\rangle-\int_{\TTd}\log\left(\frac{u_0(x)}{\rho}\right)dx - 1
 \end{equation} where $u_0$ is the density of the absolutely continuous part of $\xi_0$. For the dynamic part of the rate function, it is convenient to rewrite the cost $\mathcal{J}^{\rm FP}$ defined above \eqref{eq:FP-intro} in a more suggestive form, following the logic of \cite{fehrman2019large}. Taking $g:=u\nabla H$, the Fokker-Planck equation \eqref{eq:FP-intro} becomes the skeleton equation \begin{equation} \tag{Sk$_g$}
 	\label{eq: sk}\partial_tu=\frac12\Delta u-\nabla\cdot(ug); \qquad g\in L^2_{t,x}
 \end{equation} and we define the dynamic cost 
\begin{equation}\label{eq:intro-action}
\cJ(u)
:=\frac12 \inf\Big\{ \|g\|_{L^2_{t,x}}^2 : u \text{ solves \eqref{eq: sk}} \Big\}
\end{equation} where the skeleton equation is understood in a weak sense, which is meaningful as soon as $u\in L^2_{t,x}$. The overall rate function is then defined as \begin{equation}
	\label{eq:intro-action-2} \cI_\rho(\xi):=S_\rho(\xi_0)+\cJ(\xi).
\end{equation}
As discussed above, we do not attempt to prove a full large deviation principle with matching bounds; Indeed, the main result Theorem \ref{thm:intro-pathological} shows that the large deviations include trajectories which lack the regularity or integrability to be interpreted as solutions to \eqref{eq: sk}. We consider the trajectories of empirical measures $\pi^N(\xi^N)=(\pi^N(\xi^N_t):0\le t\le T)$ as elements of a topological space $\cX$, defined in Section \ref{relaxed}, and which may be identified with the maps $t\mapsto \xi_t$ of measures with constant total energy. When $\Xi\in \cX$ admits the representation $t\mapsto \xi_t$ for some map which is continuous, respectively right-continuous and left-limited, for the weak topology on space of measures $\cM_+(\TTd)$, we say that $\xi$ is the unique continuous, respectively c{\`a}dl{\`a}g, representative of $\Xi$. In general, an arbitrary $\Xi\in \cX$ need not be of either such form. We seek a lower bound, restricted to the set of regular paths \begin{equation}\label{eq:intro-good-class}
\cR := \cX \cap \Big\{ u\in C^\infty([0,T]\times\TTd): \inf_{(t,x)} u(t,x) >0 \Big\}.
\end{equation}

\subsection{Statement of Results} The first result is the following, which rigorously validates the rate function on sufficiently regular paths. \begin{theorem}[Restricted lower bound]\label{thm:intro-restricted-lb}
Let $(\xi^N_t)_{t\in [0,T]}$ be a KMP process on $\TND$, with initial data $\xi^N_0$ drawn from the equilibrium measure $\nu^N_\rho, \rho\in (0,\infty)$. Then, for every $u\in \cR$ and every open neighbourhood $\cU\ni u$, it holds that
\[
\liminf_{N\to\infty} \frac1{N^d}\log \P\big(\pi^N(\xi^N)\in\cU\big)
\geq -\cI_\rho(u).
\]
Moreover, $\cJ^{\rm FP}=\cJ$ on the set $\cR$. 
\end{theorem}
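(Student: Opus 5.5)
The lower bound will come from the standard tilting argument of Kipnis--Olla--Varadhan, applied to the restricted class $\cR$, where everything is smooth and bounded away from zero. Fix $u\in\cR$.

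\textbf{Identification of $\cJ$ with $\cJ^{\rm FP}$ on $\cR$.} First I show that for $u\in\cR$ the infimum in \eqref{eq:intro-action} is attained at a gradient field $g=u\nabla H$ with $H$ smooth. Given any admissible $g$, the difference $g-g'$ of two admissible fields satisfies $\nabla\cdot(u(g-g'))=0$. Minimising $\|g\|_{L^2}^2$ over the affine space $\{g:\nabla\cdot(ug)=\frac12\Delta u-\partial_t u\}$ is then a projection problem. The minimiser is $L^2$-orthogonal to $\{h:\nabla\cdot(uh)=0\}$, and this orthogonal complement is the closure of $\{u\nabla\phi\}$.

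So $g=u\nabla H$, where for each $t$ the function $H$ solves the uniformly elliptic equation
\[
-\nabla\cdot(u^2\nabla H)=\partial_t u-\tfrac12\Delta u
\]
on $\TTd$. Solvability holds because the right-hand side has zero mean, by conservation of energy for $u\in\cX$. Since $u$ is smooth and $\inf u>0$, elliptic regularity with smooth dependence on $t$ gives $H\in C^\infty$. This yields $\cJ(u)=\frac12\int u^2|\nabla H|^2=\cJ^{\rm FP}(u)$, and uniqueness of the projection shows the infimum is exactly this value.

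\textbf{Tilted dynamics and law of large numbers.} Using this $H$, I define the tilted process whose generator replaces the uniform redistribution variable $P$ on edge $\{x,y\}$ by the law with density proportional to
\[
\exp\!\Big(N^{-d}\big[H(t,x)P\Sigma+H(t,y)(1-P)\Sigma\big]\cdot N^d\Big),\qquad \Sigma=\xi(x)+\xi(y).
\]
This is the Doob-type exponential martingale tilt by $\exp\big(N^d[\langle H_T,\pi^N_T\rangle-\langle H_0,\pi^N_0\rangle-\int\ldots]\big)$. I also tilt the initial law to a product of exponentials with means $u_0(x)$, whose relative entropy per $N^d$ converges to $S_\rho(u_0)$.

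Expanding the tilted jump rates to second order in $N^{-1}$ gives an energy current $\frac12\nabla\xi-c\,\xi^2\nabla H$. The quadratic term arises from $\mathrm{Var}(P\Sigma)\sim\Sigma^2/12$ averaged over local equilibrium; with the correct constants, the local-equilibrium average of $\Sigma^2$ under exponentials of mean $u$ matches the mobility $u^2$. I then need a hydrodynamic limit for the tilted process: its empirical measure converges in $\cX$ to the solution of \eqref{eq:FP-intro} with this $H$, which is $u$ itself by uniqueness for this smooth, nondegenerate parabolic equation.

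I plan to prove the hydrodynamic limit via Yau's relative entropy method against the product measure with slowly varying means $u(t,\cdot)$. This is where smoothness and $\inf u>0$ are used. Relative entropy avoids the superexponential replacement lemma: it only requires a one-block/large-deviation estimate under local product exponential measures, and the exponential moments there are finite because $H$ is bounded and of order one.

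\textbf{Entropy bound and conclusion.} The standard entropy inequality gives
\[
\liminf_N N^{-d}\log\P(\pi^N\in\cU)\ge -\limsup_N N^{-d}H(\mathbb Q^N|\P^N),
\]
once $\mathbb Q^N(\pi^N\in\cU)\to1$. Computing $N^{-d}H(\mathbb Q^N|\P^N)$ via the Radon--Nikodym derivative, and using the law of large numbers under $\mathbb Q^N$ again to replace quadratic functionals of $\xi$ by $u^2$, yields $S_\rho(u_0)+\frac12\int u^2|\nabla H|^2$. This equals $\cI_\rho(u)$ by the identification step.

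\textbf{Main obstacle.} I expect the main obstacle to be the hydrodynamic limit for the tilted process, together with the replacement of $\xi(x)\xi(x+e)$ by $u^2$ inside the entropy computation. The energies are unbounded and the tilt grows exponentially in $\Sigma$, so the relative entropy method needs exponential moment control of local energies uniformly in time. I would first try to obtain this by a Gronwall bound on the entropy relative to the local equilibrium $\otimes_x\mathrm{Exp}(u(t,x))$, using that its log-density is linear in $\xi$ with smooth coefficients.
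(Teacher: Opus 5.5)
Your identification $\cJ=\cJ^{\rm FP}$ on $\cR$ (projection onto $\overline{\{u\nabla\phi\}}$, then elliptic regularity for $-\nabla\cdot(u^2\nabla H)=\partial_tu-\frac12\Delta u$) is the paper's Lemma~\ref{lem:optimal-drift-smooth} and is fine. The gap is in the lower bound, at exactly the step you defer. The reason you give for why the relative entropy method handles that step is false.

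In Yau's method, the large-deviation step needs a bound on $N^{-d}\log\E_{\nu^N_{u(t)}}\exp\big(\gamma\sum_x a(x)[\cdots]\big)$, where the bracket carries the mobility: $\xi(x)\xi(y)$ for $x\sim y$, or after a one-block step its block analogue $\ell^d(\bar\xi^\ell(x)-u(x))^2$. Under a product of exponentials these have no exponential moments at all. Indeed $\E[e^{\gamma\xi_1\xi_2}]=\infty$ for every $\gamma>0$, and boundedness of $H$ only rescales $\gamma$. This is precisely the obstruction the paper points to: the quadratic mobility can be controlled neither by a superexponential estimate nor through the tails of $\nu_\rho$.

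Your untruncated tilt makes things worse. Its rates carry a factor $e^{c\Sigma/N}$, so the second-order expansion of the generator is not uniform in $\xi$. The final entropy computation (``replace quadratic functionals by $u^2$ via the LLN'') needs uniform integrability of $\xi^2$ under $\QQ^N$, which convergence in probability does not supply. Your proposed remedy cannot close this either. A Gr\"onwall bound on the entropy relative to $\otimes_x\mathrm{Exp}(u(t,x))$, even of size $o(N^d)$, only controls functionals with exponential moments under that measure, i.e.\ linear ones. It gives neither exponential nor second moments of site energies.

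The missing idea is to truncate the tilt itself. The paper replaces every $\xi$ in the Radon--Nikodym derivative by $\tau_K(\xi)=\xi\wedge K$. This has several consequences:
\begin{enumerate}
\item The tilt is uniformly small, $|\vartheta^N|\le C_{H,K}N^{-1}$.
\item The drift local function $\Psi_K$ is locally Lipschitz with linear growth, and the entropy local function $\Upsilon_K$ is bounded.
\item The limit equation becomes $\partial_tv=\frac12\Delta v-\nabla\cdot(\chi_K\Theta_K(v)\nabla H)$ with $\Theta_K$ globally Lipschitz. Choosing $\chi_K=u^2/\Theta_K(u)$ makes $u$ its unique solution, even among measure-valued paths, by a mild formulation and Gr\"onwall.
\item The cost is $\frac12\int R_K(u)u^2|\nabla H|^2$, and $R_K\to1$ as $K\to\infty$.
\item Boundedness of the truncated tilt yields an $L^2_{t,x}$ bound on $\xi^N$ under $\QQ^{N,K}$, via the Lyapunov functional $\langle\xi,G_N\xi\rangle_N$. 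That bound lets you truncate linear-growth local functions. The bounded-Lipschitz superexponential estimate then transfers to $\QQ^{N,K}$ through the entropy inequality, using $\mathrm{Ent}(\QQ^{N,K}|\PP)\le C_KN^d$.
\end{enumerate}

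There is also a second, smaller error. The tilt you write down, which renormalises the law of $P$ to a density $\propto e^{(H(x)-H(y))P\Sigma}$, is not the exponential-martingale tilt you name. It keeps the edge rate at $N^2$. Its drift is $(H(x)-H(y))\Sigma^2/12$, which averages to $\frac12u^2$, at a cost of about $(H(x)-H(y))^2\Sigma^2/24$ per jump. Matching mobility $u^2$ then forces $H\mapsto2H$, and the dynamic cost becomes $2\cJ(u)$ rather than $\cJ(u)$. The sharp tilt multiplies the rates by $e^{(H(x)-H(y))(P\Sigma-\xi(x))}$ without normalising. Its modulation of the total jump rate supplies the missing $\E(\xi(y)-\xi(x))^2/4=\frac12u^2$.
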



The main result addresses the question, in light of Theorem \ref{thm:intro-restricted-lb}, of whether a matching upper bound is possible, and hence whether the candidate function $\mathcal{I}$ describes all possible large deviations. The following theorem shows that the large deviations may develop singularities in any dimension $d\ge 1$, with increasingly severe pathologies as the dimension increases. In particular, trajectories obtained in this way need not be continuous in time in the weak topology of measures in $d\ge 2$, and certainly cannot be interpreted as solutions to \eqref{eq: sk}. We write $\inf_{U\ni \Xi}$ for the infimum of a quantity over all open neighbourhoods of a trajectory $\Xi\in \cX$. 

\begin{theorem}[Pathological trajectories as large deviations]\label{thm:intro-pathological}
Let $(\xi^N_t)_{t\in [0,T]}$ be a KMP process on $\TND$, with initial data $\xi^N_0$ drawn from an equilibrium measure $\nu^N_\rho, \rho\in (0,\infty)$. Then:
\begin{enumerate}[label=(\roman*)]
    \item in dimension $d=1$, for any $t_0\in [0,T]$, there exists a path $\Xi\in \cX$ whose unique continuous-in-time representative $\xi$ admits a smooth density $u_t(x)$ for $t\neq t_0$, such that $\xi_{t_0}$ is singular with respect to the Lebesgue measure, and where \begin{equation}
    	\label{eq: collapse to singularity d=1} \inf_{\cU\ni\Xi} \liminf_N \frac1N\log\PP\left(\pi^N(\xi^N)\in \cU\right)>-\infty;
    \end{equation} \item in dimension $d=2$, for any sequence $\epsilon_k>0$ and distinct times $t_k\in (0,T), k\ge 1$ which satisfy $\sum_k \epsilon_k^{\gamma}<\infty$ for some $0<\gamma<1$, there exists a path $\Xi\in \cX$ whose unique c{\`a}dl{\`a}g representative $\xi$ satisfies, for each $k$, $$ \|\xi_{t_k}-\xi_{t_k-}\|_{\rm TV}=2\epsilon_k $$ and such that\begin{equation}
    	\label{eq: collapse to singularity d=2} \inf_{\cU\ni\Xi} \liminf_N \frac1{N^2}\log\PP\left(\pi^N(\xi^N)\in \cU\right)>-\infty;
    \end{equation}
    \item in dimension $d\ge 3$, fix $c,M\in (0,\infty)$. Then there exists $J=J(\rho, c,M)<\infty$ such that, for any relaxed measure $\Xi$ with constant energy $e(\Xi)\le M$ and a lower bound $\Xi(dt,dx)\ge c dt dx$, it holds\begin{equation}
    	\label{eq: pathologies d>=3} \inf_{\cU\ni \Xi}\liminf_N \frac1{N^d}\log \PP\left(\pi^N(\xi^N)\in \cU\right)\ge -J(\rho, c,M).
    \end{equation}\end{enumerate}

\end{theorem}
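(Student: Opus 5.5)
The natural route is to combine Theorem \ref{thm:intro-restricted-lb} with lower semicontinuity of the left-hand side under approximation. Fix $\Xi$ and an open neighbourhood $\cU\ni\Xi$. If we can find $u^n\in\cR$ with $u^n\to\Xi$ in the topology of $\cX$ and $\sup_n \cI_\rho(u^n)\le J<\infty$, then for $n$ large $\cU$ is also an open neighbourhood of $u^n$. Theorem \ref{thm:intro-restricted-lb} then gives $\liminf_N N^{-d}\log\PP(\pi^N(\xi^N)\in\cU)\ge -J$. Taking the infimum over $\cU$ proves all three parts. So the whole task reduces to a deterministic approximation problem: construct smooth, positive paths with bounded cost $S_\rho(u^n_0)+\cJ^{\rm FP}(u^n)$ that converge to the given pathological $\Xi$.

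The cost computation uses the identity $\cJ=\cJ^{\rm FP}$ on $\cR$. For a smooth positive path one may take $g=u\nabla H$, and $\cJ(u)\le\frac12\|g\|_{L^2}^2$ for any $g$ solving \eqref{eq: sk}. It is therefore enough to exhibit a drift. The key local building block is a \emph{concentrating bump}. On a short time interval of length $\tau$, move mass $\epsilon$ from a smooth profile into a ball of radius $r$ around a point, or disperse it back out. Take $u=\bar u+\epsilon\phi_t$ with $\phi_t$ a mollified profile at scale $r(t)$, and solve $\nabla\cdot(ug)=\frac12\Delta u-\partial_tu$ with a gradient field. The dominant cost is roughly
\[
\int \frac{|j|^2}{u}\,dx\,dt,
\]
where $j$ is the flux that transports the mass. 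This is the key scaling. Because the mobility is quadratic, $g=j/u$ and $|g|^2 u^{-1}\cdot u^2$, so the cost density is $|j|^2/u$ rather than $|j|^2/u^2$. The self-similar contraction flux has $|j|\sim u\,|\dot r|$, which gives cost $\sim\int\int u|\dot r|^2\,dx\,dt\sim \epsilon\int|\dot r|^2dt$ plus the diffusive correction $\sim\epsilon\int r^{-2}dt$. Optimising the contraction rate, and letting the diffusion do work when expanding, the cost of concentrating mass $\epsilon$ at a point depends on the dimension only through how the static and dynamic terms interact. In $d=1$ we can collapse to a Dirac mass and re-expand with finite total cost. In $d=2$ the transport is critical and the collapse-reexpansion over a time window of length $\to0$ costs $O(\epsilon^{\gamma})$; this is where the summability hypothesis $\sum_k\epsilon_k^\gamma<\infty$ enters. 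In $d\ge3$ the cost of moving mass along thin tubes on vanishing time windows is bounded by constants depending only on $c,M$. That allows arbitrary rearrangements, including time discontinuities, at uniformly bounded cost.

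\emph{Part (i).} Concatenate a smooth positive background with a bump whose radius $r(t)\to0$ as $t\to t_0$, e.g.\ $r(t)=|t-t_0|^{\alpha}$, with $\alpha$ chosen to make $\int(|\dot r|^2+r^{-2}\cdot\text{weight})dt$ finite. Regularise near $t_0$ by $r_n=\max(r,1/n)$ and check that the costs are uniformly bounded and that $u^n\to\xi$.

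\emph{Part (ii).} Realise the $k$-th jump by transporting mass $\epsilon_k$ through a bump that collapses and re-expands on a window of length $\delta_k\to0$ around $t_k$. In $\cX$ this converges to a jump, since the topology cannot resolve what happens on vanishing time sets. The windows are made disjoint, and the costs $C\epsilon_k^\gamma$ are summed.

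\emph{Part (iii).} Approximate a general relaxed $\Xi\ge c\,dt\,dx$ by piecewise-in-time smooth profiles bounded below by $c/2$. Connect consecutive profiles by fast rearrangements, where each unit of transported mass costs at most a constant. The lower bound $c$ controls the $1/u$ factor. We also need $S_\rho(u_0)\le C(\rho,c,M)$, which holds because $\log(u_0/\rho)\ge\log(c/2\rho)$.

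The main obstacle is the sharp dimension-dependent cost estimate for the concentrating and rearranging flows. One has to show that the explicit drift $g$ built from the bump has $\|g\|_{L^2}^2$ controlled uniformly as $r\to0$ or $\delta\to0$, and that it stays positive and smooth. I would first try explicit radial self-similar profiles, solving the divergence equation for radial flux in closed form, and only then handle the gluing to the background with cutoffs.
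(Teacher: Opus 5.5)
Your reduction, Theorem~\ref{thm:intro-restricted-lb} plus the construction of $u^n\in\cR$ with $u^n\to\Xi$ in $\cX$ and $\sup_n\cI_\rho(u^n)<\infty$, is exactly the paper's. Your plan for (i) is the paper's construction in outline. The gap is in the cost calculus on which (ii) and (iii) rest. With $j=ug$ the cost density is $\frac12|g|^2=\frac12|j|^2/u^2$, not $|j|^2/u$. The costs you write, $\epsilon\int|\dot r|^2\,dt+\epsilon\int r^{-2}\,dt$, are the linear-mobility ones. Under them no bump can ever collapse, since $|\log(r(t)/r(s))|\le(\int|\dot r|^2)^{1/2}(\int r^{-2})^{1/2}$, so your heuristic even contradicts (i). The relevant estimates for a bump $\epsilon\rho_r$ on a background $c$ are $\lesssim|\dot r|^2\min\{r^d,\epsilon/c\}$ for transport (volume, not mass) and $\lesssim\epsilon^{1-\theta}r^{d\theta-2}$ for diffusion (Lemma~\ref{lem:rho-disipation-est}). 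The factor $r^{d-2}$ is where the dimension enters. In (i) both terms are integrable for $r\sim|t-t_0|^{1/2}$, as in the paper.

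\emph{Part (ii).} You put the collapse and re-expansion inside a window that shrinks in the approximation. Take your self-similar profiles starting from a radius $R$ that does not shrink with the window, with $R^2\gg\epsilon$. Testing the equation against a cut-off $|x-a|^2$ on the part of the window where $u\le 2c$ shows the cost is at least of order $\epsilon^2R^4/\delta\to\infty$. In the paper the limit path itself shrinks and re-expands like $\sigma_0|t-t_k|^{1/2}$, at cost $\epsilon(1+|\log\epsilon|)+\epsilon^{1-\theta}$. Only the translation $a_k\to b_k$ happens on a vanishing window $\lambda$, at a frozen radius $\tau\to0$. The estimate \eqref{eq: inhomogenous cost} then forces the balance $\tau^2\ll\lambda\ll\epsilon^{\theta-1}\tau^{2-2\theta}$, e.g.\ $\lambda=\epsilon^{(\theta-1)/2}\tau^{2-\theta}$. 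That balance is the heart of the argument and is absent from your plan. Also, the windows cannot be made disjoint if $\{t_k\}$ is dense; the paper superposes all bumps and uses $|g|^2\le\sum_k\frac{v^k}{1+v^k}|h^k|^2$.

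\emph{Part (iii).} Rearranging bounded smooth profiles in time $\tau'$ costs at least of order $W_2^2/\tau'$ by Benamou--Brenier, and there are $T/\tau$ transitions. So ``a constant per unit mass'' is false, and even if true it would sum to $O(MT/\tau)$. Nor is the holding cost, of order $\|\nabla\log w\|_{L^2}^2$ per unit time, controlled by $c,M$ for a generic smooth approximant $w$. Note that $d\ge3$ plays no role in your argument. The missing idea is to write the excess mass as $\frac mL\sum_\ell\rho_\sigma(\cdot-x^k_\ell)$, many tiny disjoint spikes. In $d\ge3$, holding them costs $\lesssim L^\theta\sigma^{d\theta-2}\to0$ for $\theta>2/d$. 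Moving them along disjoint paths with divergence-free fields supported on $\sigma$-balls costs $\lesssim L\sigma^d/(\tau\tau')\to0$. Hence $\cJ(u^n)\to0$, and $J$ comes only from $S_\rho(u^n_0)$.
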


\subsection{An Overview of the Proof} We first give an overview of the key ideas in Theorem \ref{thm:intro-pathological}. Starting from the skeleton equation \eqref{eq: sk} and formally computing $\frac{d}{dt}\int \psi(u) dx$ for a smooth nonlinearity $\psi$ , we find \begin{equation}\begin{split}
	\frac{d}{dt}\int_{\TTd} \psi(u(t,x)) dx &= -\frac12\int_{\TTd}\psi''(u(t,x))|\nabla u(t,x)|^2 dx \\ & + \int_{\TTd}(\psi''(u(t,x))u(t,x)) \nabla u(t,x)\cdot g(t,x) dx. \end{split}
\end{equation} Since the only quantity controlled by the rate function is $\|g\|_{L^2_{t,x}}$, this is the only structure to which we may appeal: Any other structure may be lost along a sequence of smooth paths with uniformly bounded rate. Using Cauchy-Schwarz to extract $\|g(t)\|_{L^2_x}^2$ from the second term, we find that the estimate closes if, and only if, the nonlinearity $\psi$ satisfies $$ (\psi''(u))^2u^2\lesssim \psi''(u) $$ which forces the choice $\psi''(u)=u^{-2}$, $\psi(u)=-\log u$. The stable estimate may therefore be written \begin{equation}\label{eq: entropy est}
	S_\rho(u_T)+\frac12\|\nabla \log u\|_{L^2_{t,x}}^2 \le S_\rho(u_0)+\frac12\|g\|^2_{L^2_{t,x}}
\end{equation} for the function $S_\rho$ given by \eqref{eq:srho}. In contrast to the corresponding estimates in \cite{FG22}, the static entropy $S_\rho$ grows asymptotically linearly in the density, and so $S_\rho(u)$ may remain finite while $u$ collapses to a singular measure. \\ \\ The dependence on dimension enters through what sort of compactness or boundedness may be obtained from the integrated entropy dissipation $\|\nabla \log u\|_{L^2_{t,x}}^2$. In dimension $d=1$, the Sobolev embedding and the finiteness of the (constant) energy is sufficient to guarantee that $u\in L^2_{t,x}$, and the equation \eqref{eq: sk} can a priori be understood in a weak form. In $d=2$, the Moser-Trudinger inequality shows that the sublevel sets $$\{u\in L^1(\TT^2,[0,\infty): \|\nabla \log u\|_{L^2_x}^2\le M, \langle u, 1\rangle \le M\} \subset L^1(\TT^2)$$ are compact in the weak topology of $L^1_x$, and so the finiteness of the integrated entropy dissipation $\|\nabla \log u\|_{L^2_{t,x}}^2$ forces $u$ to be a $L^1_x$ function for $dt$-almost all times $t$. However, the embedding is no longer strong enough to force $u\in L^2_{t,x}$, so that the product $ug$ is not well-defined. We exploit this by constructing $u^n\in \cR, g^n$ solving \eqref{eq: sk}, where $g^n$ are bounded in $L^2_{t,x}$, and where, over asymptotically vanishing time slices $I$, $\int_I \int_{\TT^2}|g^n|^2\to 0$ while $\int_I \int_{\TT^2} u^n g^n\not \to 0$, producing instantaneous jumps in the limit. \\ \\ In dimension $d\ge 3$, the picture changes again, because the instantaneous entropy dissipation is no longer even lower semicontinuous for the weak topology of $L^1_x$. For this reason, it is possible to approximate an arbitrary measure $\Xi$, with the same sort of assumptions $\Xi(dx)\ge cdx, \langle 1, \Xi\rangle \le M$ as in the theorem by $u^n$ with $S_\rho(u^n)\le C(\rho,c,M), \|\nabla \log u^n\|_{L^2_x}\to 0$. Each $u^n$ is a superposition of many concentrated spikes, and we will see that the dynamic cost of rapidly moving the spikes may also be constructed to vanish in the limit.

\subsection{Organisation of the Paper}

Section~\ref{sec:preliminaries} gathers notation used repeatedly later, including the formal definitions of the KMP process and the topological space $\cX$ of relaxed paths on which the LDP takes place. Section~\ref{sec:restricted-lb} contains the restricted lower bound and is written so as to isolate exactly where the properties of the constructed change of measure $\QQ^{N,K}$ are used to obtain a replacement estimate without a superexponential estimate. Section~\ref{sec:pathological} constructs the pathological paths by from the lower semicontinuous envelope of $\mathcal{I}|_{\cR}$.

\subsection{Literature Review \& Discussion}\label{sec: LRD}

\subsubsection{The KMP Process and Related Models}

The KMP process considered here was introduced in \cite{kipnis1982heat} as a stochastic model of heat transport for which Fourier's law and the linear stationary temperature profile could be derived rigorously. The hydrodynamic and Fourier-law aspects for related models were further developed by \cite{bernardin2005fourier,bernardin2007hydrodynamics,bernardin2008stationary,giacomin1999deterministic}. We refer also to \cite{bertini2005large,bertini2015macroscopic} for a macroscopic fluctuation theory (MFT) viewpoint, in which the KMP process has the distinction that mobility $\chi(\rho)$ is quadratic rather than linear. We also refer to \cite{carinci2013duality,de2024hidden,carinci2024solvable} for derivation of the MFT predictions, as well as a rigorous derivation of the quasipotential \cite{de2024hidden} via a hidden temperature. The exposition of the present work in the periodic, equilibrium setting, in contrast to the most physically important setting of non-equilibrium with fixed temperature reservoirs at the boundary, is primarily made to ease exposition and notation. Indeed, the driving mechanism of Theorem \ref{thm:intro-pathological}, namely the balance between linear diffusivity and quadratic mobility, remains the same with either periodic or Dirichlet boundary conditions.
A body of work has also analysed more general energy-transport models such as the Brownian energy process (BEP$(m)$) \cite{giardina2009duality,carinci2013duality} and `stick-breaking' processes \cite{feng1997microscopic,suzuki1993hydrodynamic}. \cite{peletier2014large} proved the hydrodynamic limit for $\mathrm{BEP}(m)$ and gave formal derivations of the pathwise large-deviation functional and associated gradient flow structures for $\mathrm{BEP}(m)$, KMP and the generalised Brownian Energy Process. The works \cite{bernardin2007hydrodynamics} and \cite{feng1997microscopic,suzuki1993hydrodynamic} derive the heat and porous medium equations respectively as the hydrodynamic limit, but rely on $L^p_{t,x}$ estimates which are generally not possible at the large deviations scale.

\subsubsection{Large Deviations in Lattice Systems \& The Superexponential Replacement Estimate}

The probabilistic strategy behind Theorem~\ref{thm:intro-restricted-lb} belongs to the now classical large-deviation method for interacting particle systems \cite{kipnis1989hydrodynamics,donsker1989large,varadhan1997diffusive,quastel1999large,kipnis1998scaling}. On the lower-bound side, one introduces a weakly tilted dynamics, proves tightness under the tilted law, identifies every subsequential hydrodynamic limit as a weak solution of the desired skeleton equation, and then computes the relative entropy cost of the change of measure. The most involved step is the validity of the (superexponential) replacement lemma, where one must show that, for every $\delta>0$ and every regular test function $\varphi$,
\begin{equation}\label{eq: superexponential prototype}
 \limsup_{\epsilon\to 0}\limsup_N \frac1{N^d}\log \mathbb P\Bigg(\Big|\int_0^T V^{N,F,\varphi}_{\epsilon}(t, \xi^N_t)\,dt\Big|>\delta\Bigg)=-\infty
\end{equation}
where $F$ is a local function of the configuration, and $V^{N,F,\varphi}_\epsilon$ is an error given by replacing $F(\xi^N)$ by functions of the local average $\bar{\xi}^{N\epsilon}$ on a small macroscopic scale, see Sections \ref{subsec:technical} and \ref{subsec:proof-technical-inputs}. In the settings of exclusion and zero-range processes, such estimates are proven in \cite{kipnis1989hydrodynamics,donsker1989large,varadhan1997diffusive,quastel1999large,kipnis1998scaling}, and the same arguments can readily be seen to apply to the KMP process without essential change \cite{bertini2025private}, as soon as $F$ is bounded and Lipschitz-continuous. In contrast, as discussed in \cite{bertini2005large,peletier2014large}, the quadratic mobility inherent to the KMP model does not fit into this class, nor can the observables be truncated using the tails of the invariant distributions $\nu_\rho$. We refer also to \cite{gess2023rescaled,gess2026porous} for a similar issue in the context of the zero-range process with superlinear jump rates.\\

 We do not attempt to resolve the issue of the superexponential estimate at the level necessary to take quadratically growing $F$ in \eqref{eq: superexponential prototype}. Instead, part of the contribution is a method to bypass this obstruction in the context of a restricted lower bound. Provided that the desired limit is sufficiently regular - in the present context, $u\in\mathcal R$ - it is possible to construct candidate changes of measure $\QQ^N\ll \PP$ in order to exploit the regularity of $u$ to deduce additional integrability for $\xi^N$ in $\QQ^N$-expectation. With this in hand, and using the validity of \eqref{eq: superexponential prototype} for bounded and Lipschitz $F$, the error in \eqref{eq: superexponential prototype} may be shown to be small with high $\QQ^N$-probability, which is sufficient for the lower bound. Crucially, we seek information only about the specially constructed $\QQ^N$, and not about generic changes of measure at the exponential scale.
 
 We further remark that the dependence on dimension appears to be a somewhat different effect from that investigated in \cite{quastel1999large}, see also the discussion in \cite{QY}. In these works, the dimension appears in determining the correction to the diffusivity, respectively logarithmic super-diffusivity in $d=2$. In contrast, both the diffusivity and mobility for our model are fixed, independent of dimension, and the dimension-dependence enters via the strength of the available Sobolev embeddings, as discussed under \eqref{eq: entropy est}. 
\subsubsection{Parabolic-Hyperbolic PDE with Rough Drift}

The examples in Theorem \ref{thm:intro-pathological} are made possible by exploiting the link between large deviations and parabolic-hyperbolic PDE with rough drift \cite{giacomin1999deterministic,fehrman2019large,dirr2020conservative,fehrman2021well,fehrman2025matching}, which motivates the reformulation of the Fokker-Planck-like equation \eqref{eq:FP-intro} into \eqref{eq: sk}. As in the analysis leading to \eqref{eq: entropy est}, while each $u \in \cR$ and associated $H$ have better regularity, any regularity beyond that guaranteed by the rougher form \eqref{eq: sk} is lost in passing to the lower semicontinuous envelope.
The analysis of such equations goes back to the renormalised solution theory \cite{diperna1989ordinary,ambrosio2004transport}, see also \cite{crippa2008estimates,de2008ordinary,jabin2010differential,jabin2016critical}. Solution theories of kinetic solutions were developed in \cite{LPT94,P02}, and the notion of entropy of solutions was developed in \cite{chen2003well,karlsen2003uniqueness,bendahmane2004renormalized,lellis2003structure}. We remark that the formulation \eqref{eq: sk} forces $g\in L^2_{t,x}$, which is much less regular than the cases considered in these works; for instance, \cite{karlsen2003uniqueness} requires $g\in L^1_t W^{1,1}_x\cap L^\infty_tC^0_x$. The construction of the trajectories in Theorem \ref{thm:intro-pathological} in Section \ref{sec:pathological} may be understood as exploiting ill-posedness phenomena for \eqref{eq: sk}, for which we refer to  \cite{modena2018non,modena2020convex}. As before, the setting relevant to the KMP process is much rougher than that of the cited works, allowing for a more dramatic failure of regularity.\section{Preliminaries}\label{sec:preliminaries}

\subsection{Discrete Torus, Empirical Measures, Function Spaces}

We work with functions on the discrete torus $\TND$, which we define including the rescaling as $\TND:=\{0,N^{-1}, \dots, 1-N^{-1}\}^d$. We write $x\sim y$ for adjacency of $x,y\in \TND$ as nearest neighbours and, to avoid overcounting certain orientation-sensitive quantities, fix an orientation of each pair of nearest neighbours. In the sequel, the sum $\sum_{x\sim y}$ will therefore stand for a sum over all edges $E(\TND)$, each counted once with its privileged orientation. One may check that the choice of orientations plays no r\^ole in determining `physical' quantities, such as the macroscopic evolution, any may be chosen arbitrarily. 
 
For $f,g:\TND\to\RR$ we write
\[
\ip{f,g}_N := \frac1{N^d}\sum_{x\in\TND} f(x)g(x),
\qquad
\norm{f}_{L^p_N} := \Big(\frac1{N^d}\sum_{x\in\TND} \abs{f(x)}^p\Big)^{1/p}.
\]
Throughout, we use the abbreviations $\|\cdot\|_{L^p_x}, \|\cdot\|_{L^p_{t,x}}$ for the usual Lebesgue norms for functions of space, respectively time and space, under the usual Lebesgue measure $dx$, respectively $dt dx$.

\subsection{Formalities of the KMP Process}

We now give some formalities of the KMP dynamics defined in \cite{kipnis1982heat}. The process is defined on the state space $\Lambda_N:=[0,\infty)^{\TND}$ with generator \begin{equation}\label{eq:intro-generator}
\cL_t^{N} F(\xi) = N^2\sum_{x\sim y} L_{x,y}F(\xi)=N^2\sum_{x\sim y}\int_0^1 [F(\xi^{x,y,p})-F(\xi)] dp
\end{equation}
where the new configuration is given by redistributing the energy between $x$ and $y$:
\begin{equation}\label{eq:reconfig}
\xi^{x,y,p}(z)=\begin{cases}\xi(z) & \text{for } z\notin\{x,y\} \\
p(\xi(x)+\xi(y)) & z=x; \\ (1-p)(\xi(x)+\xi(y)) & z=y. \end{cases}
\end{equation}


\paragraph{\bf Empirical Measures}To each microscopic energy configuration, we associate the empirical measure \begin{equation}\label{eq:empirical-measure-prelim}
\pi^N(\xi):=\frac1{N^d}\sum_{x\in\TND}\xi(x)\,\delta_x\in \cM_+(\TTd).
\end{equation}
We will also use a restriction of the state space $\Lambda_{N,a}$ of configurations with total energy bounded above by $a$: 
\[
\Lambda_{N,a}:=\{\xi\in [0,\infty)^{\TND}: \langle 1, \pi^N(\xi)\rangle \le a\}.
\]
Since the KMP redistribution mechanism \eqref{eq:reconfig} preserves the total energy $\langle 1, \pi^N(\xi)\rangle=\ip{\xi}_N$ globally, each $\Lambda_{N,a}$ is invariant for the dynamics specified by \eqref{eq:intro-generator}. 

\paragraph{\bf Empirical Flux} It is convenient, for later application, to enhance the dynamics with an auxiliary `flux measure' $\mu^N$. We define $\mu^N$ to be the unnormalised empirical measure on $[0,T]\times E(\TND)\times [0,1]$, assigning mass $1$ to each tuple $(t,(x,y),p)$ such that, at time $t$, energy is redistributed across the (oriented) edge $(x,y)$ according to \eqref{eq:reconfig} with parameter $p$. $\mu^N$ is thus a Poisson random measure with intensity \begin{equation}
	\label{eq:prm-intensity} \bar{\mu}^N(dt,(x,y),dp)=N^2dp dt.
\end{equation}

\paragraph{\bf Invariant Measures} It is well-known \cite{peletier2014large} and can be straightforwardly verified that the KMP dynamics are ergodic on each simplex $$ \partial \Lambda_{N,a}=\{\xi\in[0,\infty)^{\TND}: \langle 1, \pi_N(\xi^N)\rangle=a\}.$$ Each microcanonical ensemble, given by the uniform measure on $\partial \Lambda_{N,a}$, is thus invariant for the dynamics. We will work with the canonical invariant measures $\nu^N_\rho, \rho\in (0,\infty)$ under which the energy of each site is an independent exponential random variable: \begin{equation}
	\label{eq:equilib-rho} \nu^N_\rho(d\xi^N)=\prod_{x\in \TND} \frac1\rho e^{-\xi^N(x)/\rho}d\xi^N(x).
\end{equation} Similarly, for a profile $u\in C(\TTd,[0,\infty))$, we denote $\nu^N_u$ the slowly varying local equilibrium \begin{equation}
	\label{eq:SVLE} \nu^N_u(d\xi^N)=\prod_{x\in \TND} \frac1{u(x)}e^{-\xi^N(x)/u(x)}d\xi^N(x).
\end{equation} In the case where $u$ vanishes on one or more sites, the corresponding marginals in \eqref{eq:SVLE} should be understood to be the degenerate distribution $\delta_0$. We will always use $u$ for spatially dependent profiles, in order to distinguish between global and slowly varying local equilibria. We will use the following lemma. \begin{lemma}[Convergence under slowly varying local equilibrium] \label{lem:svle}Let $u\in C(\TTd,[0,\infty))$. Then the law of $\pi_N(\xi^N_0)$ under $\xi^N_0\sim \nu^N_u$ converges in distribution to $\delta_u$. The same holds for the measures $\nu^N_{u, a}$  \begin{equation}
	\nu^N_{u,a}(A):=\frac{\nu^N_u(A\cap \{\xi^N\in \Lambda_N: \langle 1, \pi_N(\xi^N)\rangle \le a\}}{\nu^N_u(\{\xi^N\in \Lambda_N: \langle 1, \pi_N(\xi^N)\rangle \le a\})}; \quad A\subset \Lambda_N \end{equation} which are the measures of $\nu^N_u$ conditioned to have total mass not exceeding $a$, as soon as $a>\langle 1, u\rangle$. 
	
\end{lemma}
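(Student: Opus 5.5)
The plan is to prove convergence in probability of $\langle \phi, \pi_N(\xi^N_0)\rangle$ to $\langle \phi, u\rangle$ for each $\phi\in C(\TTd)$. Since $\cM_+(\TTd)$ with the weak topology is metrised by a countable family of such test functions (for instance a dense sequence in $C(\TTd)$), this gives convergence in distribution of $\pi_N(\xi^N_0)$ to the deterministic limit $\delta_u$.

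\textbf{Step 1 (law of large numbers under $\nu^N_u$).} Under $\nu^N_u$ the energies $\xi^N(x)$ are independent, each exponential with mean $u(x)$ (or $\delta_0$ where $u(x)=0$), so $\mathrm{Var}(\xi^N(x))=u(x)^2$. Hence
\begin{equation*}
\E\,\langle \phi,\pi_N(\xi^N)\rangle=\frac1{N^d}\sum_{x\in\TND}\phi(x)u(x)\to\int_{\TTd}\phi u\,dx,
\end{equation*}
which is a Riemann sum of a continuous function. The variance is
\begin{equation*}
\frac1{N^{2d}}\sum_{x}\phi(x)^2u(x)^2\le N^{-d}\|\phi\|_\infty^2\|u\|_\infty^2\to 0.
\end{equation*}
Chebyshev's inequality then gives the convergence in probability. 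Taking $\phi\equiv 1$ in particular gives $\langle 1,\pi_N(\xi^N)\rangle\to\langle 1,u\rangle$ in probability.

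\textbf{Step 2 (conditioned measures).} Let $B_N=\{\langle 1,\pi_N(\xi^N)\rangle\le a\}$. Since $a>\langle 1,u\rangle$, Step 1 with $\phi\equiv1$ gives $\nu^N_u(B_N)\to 1$. For any event $A$,
\begin{equation*}
\nu^N_{u,a}(A)=\frac{\nu^N_u(A\cap B_N)}{\nu^N_u(B_N)}\le \frac{\nu^N_u(A)}{\nu^N_u(B_N)}.
\end{equation*}
Apply this to $A=\{|\langle\phi,\pi_N\rangle-\langle\phi,u\rangle|>\delta\}$. The numerator tends to $0$ and the denominator tends to $1$, so the probability of $A$ under $\nu^N_{u,a}$ tends to $0$, and the same conclusion follows.

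\textbf{Main obstacle.} The only point needing care is that the strict inequality $a>\langle1,u\rangle$ is essential: it is what makes $\nu^N_u(B_N)\to1$. A strict margin is enough because the fluctuations of the total energy are of order $N^{-d/2}$. The passage from single test functions to the weak topology is standard, using the countable dense family, and uses tightness only trivially because the total masses converge.
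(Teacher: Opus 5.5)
Your proof is correct and complete. The Chebyshev law of large numbers for each test function, the passage to the weak topology through a countable dense family containing $\phi\equiv 1$, and the bound $\nu^N_{u,a}(A)\le \nu^N_u(A)/\nu^N_u(B_N)$ are all that is needed.

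The paper states this lemma without proof and treats it as standard, so there is no competing argument to compare with. Your Step 2 also shows that $\nu^N_u(B_N)\to 1$. This is exactly the fact the paper uses later, in the proof of Lemma~\ref{lem:entropy-est}, when it asserts that the conditioning factor in $Y^N_0$ converges to $1$.
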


\subsection{The Path-Space of Relaxed Measures} \label{relaxed}

We next specify a topological space in which we consider the large deviations. Throughout, for a compact topological space $X$,
$\cM_+(X)$ denotes the space of finite nonnegative Borel measures on $X$, and $\cM_a(X)$ for those measures with mass at most $a$.
\begin{definition}[Relaxed-measure path space]
We define $\mathcal X$ to be the set of all finite nonnegative Borel measures $\Xi$ on
$[0,T]\times \TTd$ whose time marginal is of the form \[
\Xi(dt\times\TTd)=e(\Xi)dt
\] for some $e(\Xi)\in [0,\infty)$. We equip $\mathcal X$ with the topology of weak convergence of measures on $[0,T]\times\TTd$, that is, the topology induced by the maps\[
\Xi\mapsto \int_{[0,T]\times\TTd} \phi(t,x)\,\Xi(\dd t,\dd x)
\]
for continuous test functions $\phi$ on $[0,T]\times\TTd$.
\end{definition}
Similarly to the state spaces, we write $\cX_a$ for the set of those $\Xi\in \cX$ where $e(\Xi)\le a$, each of which is compact by Prokhorov's theorem.\\\\ We also remark on some useful identifications in order to ease notation. Since the time-marginal of any $\Xi \in \cX$ is absolutely continuous with respect to the Lebesgue $dt$, it may be identified with its disintegration $(\xi_t)_{t\in[0,T]}$, such that
$\xi_t(\TTd)$ is constant in time. In the sequel, we identify any measurable map $\xi: [0,T]\to \cM_+(\TTd)$ with constant mass to the relaxed path $\Xi(dt,dx)=\xi_t(dx)dt$, identifying trajectories which coincide for almost all times. Similarly, to ease notation, we do not distinguish between a density $u\in L^1_x, u\ge 0$ and the measure it induces, nor between a nonnegative function $u\in L^\infty_tL^1_x$ and the corresponding relaxed path. \\ \paragraph{\bf Distances on Measures} It is convenient to introduce some distances on measures and relaxed measures which induce, on bounded sets, weak convergence or convergence in $\cX$. For two finite measures $\mu, \nu$ on $\TTd$, we set \begin{equation}
	\label{eq: def W} W(\mu, \nu):=\|\mu-\nu\|_{W^{-1,1}(\TTd)}=\sup\{\langle f, \mu-\nu\rangle: \|f\|_{W^{1,\infty}(\TTd)}\le 1\}.
\end{equation} We also write $\|\cdot\|_{\rm TV}$ for the total variation norm. For relaxed measures $\Xi, \Xi'\in \cX$, we write $d(\Xi, \Xi')$ for the distance induced by duality against functions which are Lipschitz in both space and time: \begin{equation}
	d(\Xi, \Xi'):=\sup\left\{\langle f, \Xi-\Xi'\rangle: \|f\|_{L^\infty_t W^{1,\infty}_x\cap W^{1,\infty}_tL^\infty_x}\le 1\right\}.
\end{equation} \paragraph{\bf Continuous and C{\`a}dl{\`a}g Trajectories} Although it is not possible to restrict the LDP to trajectories enjoying any continuity properties, it is convenient to introduce notation and terminology. We say that a map $t\mapsto \xi_t$ is continuous and write $\xi \in C([0,T],\cM_+(\TTd))$ if it is continuous when $\cM$ is equipped with the distance $W$. Similarly, we say that $\xi$ is c{\`a}dl{\`a}g and write $\xi \in D([0,T],\cM_+(\TTd))$ if the map $t\mapsto \xi_t$ is right-continuous in the distance $W$, and $\xi$ admits left-limits in $W$: $$ W(\xi_s,\xi_{t-})\to 0 \text{ as }s\uparrow t.$$ We equip the spaces $C([0,T], \cM_+(\TTd)), D([0,T], \cM_+(\TTd))$ with the topologies of uniform, respectively Skorokhod, convergence for the metric $W$. \\ \\Using the identifications above, we will shorten referring to the existence of continuous or c{\`a}dl{\`a}g representatives of $\Xi\in \cX$ to saying that $\Xi$ itself is continuous, respectively c{\`a}dl{\`a}g. If such a representative exists, it is necessarily unique.

\section{Restricted Lower Bound without a Superexponential Estimate}\label{sec:restricted-lb}

In this section we give the proof of Theorem~\ref{thm:intro-restricted-lb}. We therefore fix, forever, $u\in \cR$, and $\rho>0$ as in the theorem. We first give an overview of the proof; several compensations are needed to deal both with the potentially poor behaviour of the skeleton equation appearing in \eqref{eq:intro-action}, and with the lack of a suitable superexponential estimate for the mobility. \\ \\ The argument proceeds by the usual `tilting' or relative entropy method \cite[Lemma 7]{mariani2010large}: it is sufficient to exhibit a change of measures $\QQ^N\ll\PP$ such that, under $\QQ^N$, $\pi_N(\xi^N)\to u$, with relative entropy $$ \limsup_N \frac1{N^d}{\rm Ent}(\QQ^N|\PP)\le \cI(u).$$ Formally, the desired transformation $\QQ^N$ was written down in \cite[Section 3.2]{bertini2005large}. Both difficulties discussed in the introduction appear in making this argument rigorous. First, since the exponential martingale defining $\frac{d\QQ^N}{d\PP}$ involves local functionals of the form $\sim \xi(x)^2$, proving that $\pi_N(\xi^N)$ concentrates on solutions to the Fokker-Planck equation \eqref{eq:FP-intro} requires the superexponential estimate for this mobility. Secondly, due to the lack of superexponential tightness, the limit path may a priori only be measure-valued, and one cannot (na\"ively) make sense of, or prove well-posedness for, the limiting equation. The resolution is as follows. \begin{enumerate} \item First, we write down, for general $H$ and cutoffs $K\in (0,\infty)$, the same change-of-measures in \eqref{eq:RND-tilt-1}-\eqref{eq:RND-tilt-3}, but with all occurrences of $\xi(x)$ in the Radon-Nikodym derivative replaced by $\xi(x)\land K$. \item Secondly, instead of seeking replacement lemmas for the functions of the form $\xi(x)(\xi(y)\land K)$ which are superexponential, and would therefore hold for {\em any} change of measures, we instead tailor the replacement lemma to $\QQ^{N,K}$, allowing us to exploit the regularity of the desired $u\in \cR$. This is achieved by using the boundedness of the drift to prove, for each fixed $K$, an integrability estimate for $\|\xi\|_{L^2_{t,x}}$ Together with the validity of the superexponential replacement lemma for bounded local functions, this allows us to prove the replacement lemma under $\QQ^{N,K}$. \item The limiting equation, for the change of measure thus constructed, takes the form \begin{equation}
	\label{eq: FP-cutoff-prototype} \partial_t v =\frac12\Delta v -\nabla \cdot (\chi_K \Theta_K(v)\nabla H)
\end{equation} where $\Theta_K$ is Lipschitz continuous, and hence admits an extension to measures. Using Lipschitz continuity, it is possible to prove uniqueness for equations of the form \eqref{eq: FP-cutoff-prototype} in the space of measure-valued trajectories. \item Finally, instead of viewing a limiting $u\in \cR$ as a solution to \eqref{eq:FP-intro}, the function $\chi_K$ may be chosen so that the desired $u$ is a solution to \eqref{eq: FP-cutoff-prototype}. The relative entropy associated to the change of measures is no longer exactly $\frac12\int u^2|\nabla H|^2$, as in \cite[Section 3.2]{bertini2005large}, but disagrees only by a multiplicative factor which $\to 1$ as $K\to \infty$. \end{enumerate}\subsection{Properties of $g$}

\begin{lemma}[Regularity of the optimal drift]\label{lem:optimal-drift-smooth}
For any $u\in\cR$, there exists a unique $g\in L^2_{t,x}$ attaining the minimum \eqref{eq:intro-action}, which additionally is of the form $g=u\nabla H, \nabla H\in C^\infty_{t,x}$, and hence is smooth $g\in C^\infty([0,T]\times\TTd,\RRd).$ 
\end{lemma}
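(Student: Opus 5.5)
The plan is to view \eqref{eq:intro-action} as a minimum-norm problem in the Hilbert space $L^2_{t,x}$ over an affine constraint set. Fix $u\in\cR$. The constraint that $u$ solves \eqref{eq: sk} weakly reads
\[
\int_0^T\!\!\int_{\TTd} u\, g\cdot\nabla\phi \,dx\,dt = \int_0^T\!\!\int_{\TTd}\Big(\partial_t u-\tfrac12\Delta u\Big)\phi\,dx\,dt
\]
for all test functions $\phi\in C^\infty([0,T]\times\TTd)$, together with the initial condition. Because $u$ is smooth, this is equivalent to the distributional identity $\nabla\cdot(ug)=f:=\frac12\Delta u-\partial_t u$. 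Since $u$ is bounded, each functional $g\mapsto \int ug\cdot\nabla\phi$ is continuous on $L^2_{t,x}$, so the admissible set $A_u$ is a closed affine subspace. Once $A_u$ is shown to be nonempty, uniqueness of a minimiser of $\|g\|^2_{L^2}$ follows from strict convexity of the norm and the projection theorem.

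For existence and the explicit form, look for $g=u\nabla H$. The constraint then becomes, for each fixed $t$, the elliptic equation
\[
\nabla\cdot(u^2\nabla H(t,\cdot))=f(t,\cdot)\quad\text{on }\TTd.
\]
Mass is conserved by $u\in\cX$, and so $\int_{\TTd} f(t,x)\,dx=-\frac{d}{dt}\int u\,dx=0$, which is exactly the solvability condition on the torus. The coefficient $u^2$ is smooth and bounded below by $(\inf u)^2>0$, so the operator is uniformly elliptic. Lax--Milgram on mean-zero $H^1(\TTd)$ gives a unique mean-zero solution $H(t,\cdot)$, and elliptic regularity gives $H(t,\cdot)\in C^\infty$.

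Smoothness in $t$ is the step needing care. I would differentiate the equation in $t$ (or use difference quotients): $\partial_t H$ solves the same elliptic problem with right-hand side $\partial_t f-\nabla\cdot(\partial_t(u^2)\nabla H)$, which is smooth and mean-zero. Inducting on the order of derivatives, with Schauder or $H^k$ estimates that are uniform in $t$ (uniform because $u$ and $1/u$ are bounded with all derivatives on the compact set $[0,T]\times\TTd$), yields $\nabla H\in C^\infty_{t,x}$. This gives an admissible $g_*=u\nabla H$, and it is smooth.

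It remains to show that $g_*$ is the minimiser. Any other admissible $g$ differs from $g_*$ by some $h$ with $\nabla\cdot(uh)=0$ in the distributional sense on $(0,T)\times\TTd$. Then
\[
\langle g_*,h\rangle_{L^2_{t,x}}=\int_0^T\!\!\int_{\TTd}\nabla H\cdot(uh)\,dx\,dt=0,
\]
justified by testing the weak identity with $\phi=H$, which is allowed since $H$ is smooth. Pythagoras then gives $\|g\|^2=\|g_*\|^2+\|h\|^2$, so $g_*$ is the unique minimiser. The main obstacle I expect is making the weak formulation precise, in particular the handling of the boundary terms at $t=0,T$. I would choose the formulation with test functions on $[0,T]$ that include the initial and final pairings, so that $\nabla\cdot(uh)=0$ is tested against all smooth $\phi$ in space-time. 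This makes testing with $H$ legitimate and the boundary terms cancel.
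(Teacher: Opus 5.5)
Your proof is correct, but it runs in the opposite direction to the paper's.

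\textbf{The paper's route.} It first obtains existence and uniqueness abstractly. It exhibits an explicit, non-optimal admissible element $g_0=u^{-1}\bigl(\frac12\nabla u+\nabla(-\Delta)^{-1}\partial_t u\bigr)$ to show the constraint set is nonempty, then uses strict convexity on the closed affine subspace. Only afterwards does it characterise the minimiser through the first variation: $\langle g,h\rangle=0$ for all $h$ with $\nabla\cdot(uh)=0$. This places $g$ in the $L^2_{t,x}$-closure of the weighted gradients, which the paper identifies with $\{u\nabla H: H\in L^2_tH^1_x\}$. Elliptic regularity for $\nabla\cdot(u^2\nabla H)=\frac12\Delta u-\partial_t u$ then upgrades $H$.

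\textbf{Your route.} You solve this Euler--Lagrange equation first, slice by slice in time, and verify optimality directly by orthogonality and Pythagoras. This has three advantages. You never need to identify the orthogonal complement of $\{h:\nabla\cdot(uh)=0\}$ with the closure of the weighted gradients. The strict-convexity/projection step becomes redundant, since Pythagoras gives existence and uniqueness at once. And you handle the regularity in $t$ explicitly, whereas the paper leaves it implicit in its appeal to elliptic regularity. The price is that you need solvability of the elliptic problem, and its smooth dependence on $t$, up front. Here that is cheap because $u$ is smooth and bounded below.

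\textbf{What the paper's route buys.} It is more structural: the minimiser is a weighted gradient purely by the first-order condition. That conclusion does not require solving for $H$ first.

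\textbf{Boundary terms.} Your worry about boundary terms at $t=0,T$ is unnecessary. The constraint on $h$ involves only spatial gradients of the test function. For a temporal cutoff $\chi_n(t)$ one has $\nabla(\chi_n H)=\chi_n\nabla H$. Dominated convergence, using $uh\in L^2_{t,x}$ and $\nabla H$ bounded, then removes the cutoff, whichever weak formulation is adopted.
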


\begin{proof}
Fix $u$. First, we note that the set of candidate $g$
\[
\mathfrak C:=\Big\{g\in L^2([0,T]\times\TTd;\RR^d): \nabla\cdot(ug)=\frac12\Delta u-\partial_t u\Big\}
\]
is nonempty, because it contains 
\[
g_0:=u^{-1}\left(\frac12\nabla u+\nabla(-\Delta)^{-1}\partial_t u\right)
\]
which is well-defined in $C^\infty_{t,x}\subset L^2_{t,x}$ thanks to the smoothness and strict positivity of $u$, and because $\partial_tu$ is a smooth function with zero mean on each time-slice. Next, by the definition of weak solutions, $\mathfrak C$ is a closed, affine subspace of the Hilbert space $L^2([0,T]\times\TTd;\RR^d)$, and since the norm $\|\cdot\|_{L^2_{t,x}}$ is strictly convex, the problem \eqref{eq:intro-action} admits a unique minimiser $g$.

We now show that the optimal $g$ must enjoy additional regularity. Let
\[
\mathfrak C_0:=\{h\in L^2([0,T]\times\TTd;\RR^d): \nabla\cdot(uh)=0\}
\] be the space of tangent directions to $\mathfrak C$. By considering the first variation, $\langle g, h\rangle=0$ for all $h\in \mathfrak C_0$, which in turn implies that $g$ belongs to the closure $$ g\in \overline{\{u\nabla H:H\in C^1([0,T]\times\TTd)\}}^{L^2_{t,x}}.   $$ Since $u$ is bounded, this set coincides exactly with $\{u\nabla H: H\in L^2_tH^1_x\}$, and hence $g$ may be written in this form. Making this substitution, the skeleton equation is converted back to the Fokker-Planck form:
\begin{equation}\label{eq:weighted-elliptic-lambda}
\nabla\cdot\big(u^2\nabla H\big)=-\partial_t u+\frac12\Delta u.
\end{equation}
Viewed as an equation for $H$, the smoothness and strict positivity of $u$ ensure that \eqref{eq:weighted-elliptic-lambda} is an elliptic equation with smooth coefficients. Standard
elliptic regularity (see, for example, \cite{ladyzhenskaia1968linear}) therefore implies that $H$ is smooth modulo time-varying additive constants, and hence so are $\nabla H$ and $g=u\nabla H$.\end{proof}
\subsection{Cutoff at Large Energies} Before defining a change of measure, we first specify a cutoff parameter $K$ at large energies and associated notation. For any $K>0$, we set $\tau_K(r):=r\wedge K$. Writing $\xi$ for an exponential variable of mean $1$, we define the truncated moments $m_1^K$, $m_{1,1}^K$, and $m_2^K$ by
\[
m_1^K(\rho):=\E[(\rho \xi)\wedge K],\quad m_{1,1}^K(\rho):=\E[(\rho\xi)(\rho\xi\wedge K)],\quad m_2^K(\rho):=\E[(\rho\xi\wedge K)^2],
\]
so that
\[
m_1^K(\rho)=\rho\big(1-e^{-K/\rho}\big),\qquad m_{1,1}^K(\rho)=2\rho^2-(K\rho+2\rho^2)e^{-K/\rho},
\]
\[
m_2^K(\rho)=2\rho^2-2(K\rho+\rho^2)e^{-K/\rho}.
\]
We further define the combinations
\begin{equation}\label{eq:def-Theta-Gamma-K}
\Theta_K(\rho):=\frac13\big(2m_{1,1}^K(\rho)-\rho m_1^K(\rho)\big),
\qquad
\Gamma_K(\rho):=\frac23 m_2^K(\rho)-\frac13\big(m_1^K(\rho)\big)^2,
\end{equation} as well as the ratios
\begin{equation}\label{eq:def-AK-RK}
A_K(\rho):=\frac{\rho^2}{\Theta_K(\rho)},
\qquad
R_K(\rho):=\frac{\Gamma_K(\rho)\rho^2}{\Theta_K(\rho)^2}.
\end{equation}
Since the path $u$ is smooth and strictly positive, we may choose $0<m\le M<\infty$ so that $m\le u(t,x)\le M$. Using the uniform convergence $\Theta_K(\rho)\to \rho^2$ and $\Gamma_K(\rho)\to \rho^2$ on $[m,M]$, we may choose $K$ large enough that
\begin{equation}\label{eq:choice-of-K}
\sup_{\rho\in[m,M]} |R_K(\rho)-1|
\le {\delta}\left(1+\mathcal{J}(u)\right)^{-1}.
\end{equation}
For the remainder of the section this cutoff level $K$ is fixed, and we abbreviate $\chi_K(t,x):=A_K(u(t,x))$.
\subsection{Construction of a Change of Measure}
We are now in a position to define a change of measure $\QQ^{N,K}$. We fix, for the remainder of this section, $u\in \cR$, write $g=u\nabla H$ with $H\in C^\infty([0,T]\times\TTd)$ as in Lemma \ref{lem:optimal-drift-smooth}, and fix $\delta>0$. 
We first fix $a>\langle 1, u_0\rangle$, and define \begin{equation}\begin{split}
\label{eq:RND-tilt-1} &Y^N_0:=\exp\left(-\sum_{x\in \TND} \left[\frac{\xi^N_0(x)}{u_0(x)}-\frac{\xi^N_0(x)}{\rho}+\log\left(\frac{u_0(x)}{\rho}\right)\right]\right)\\ &\hspace{7cm}\times \frac{1(\langle 1, \pi_N(\xi^N_0)\rangle \le a)}{\nu^N_{u_0}(\langle 1, \pi_N(\xi^N_0)\rangle \le a)} \end{split}
\end{equation} which is readily seen to have $\mathbb{E}[Y_0]=1, Y_0\ge 0$. Next we define, for $x\sim y$ and $p\in[0,1]$,
\begin{equation}\label{eq:def-beta-r}
{\vartheta^N_t(x,y,p,\xi):=\chi_K\!\left(t,m(x,y)\right)\big(H(t,x)-H(t,y)\big)\Big(p\tau_K(\xi(y))-(1-p)\tau_K(\xi(x))\Big),}
\end{equation}
\begin{equation}\label{eq:def-r-cutoff}
{r_t(x,y,p,\xi):=\exp\big(\vartheta^N_t(x,y,p,\xi)\big),}
\end{equation}
where $m(x,y)\in \TTd$ is the midpoint of the edge $x,y$ and, recalling the definition of $\mu^N$ above \eqref{eq:prm-intensity}, the corresponding exponential martingale \begin{equation}\label{eq:RND-tilt-2}\begin{split}
&Z^N_t
:=\exp\Bigg\{
\int_{(0,t]\times[0,1]}\hspace{0.1cm}\sum_{x\sim y} \log r_s(x,y,p,\xi^N_{s-})\,\mu^N(\dd s,(x,y),\dd p)
\\ &\hspace{4.2cm} -N^2\int_{(0,t]\times[0,1]}\!\sum_{x\sim y}\big(r_s(x,y,p,\xi^N_{s-})-1\big)\,\dd p\,\dd s
\Bigg\}.
\end{split} \end{equation} It is readily seen that $Z_t$ is a mean-1, nonnegative $\PP$-martingale, which allows us to define a change of measure by \begin{equation}
\label{eq:RND-tilt-3} \frac{d\QQ^{N,K}}{d\PP}:=Y^N_0 Z^N_T.
\end{equation} Using the Girsanov theorem for jump processes, we obtain the following properties of $\mu^N, \xi^N$ under $\QQ^{N,K}$, which follow immediately from \cite[Appendix 1, Proposition 7.3]{kipnis1998scaling}.
\begin{lemma}[Girsanov tilt]\label{lem:girsanov}
Under the probability measures $\QQ^{N,K}$ given by \eqref{eq:RND-tilt-3}, the processes $\xi^N_t$ are Markov with generator given by \begin{equation}\label{eq:tilted-generator-1}
{\cL_t^{N,H,K} = N^2\sum_{x\sim y} L^{H,K}_{x,y,t};}
\end{equation} \begin{equation}\label{eq:tilted-generator-2}
{L^{H,K}_{x,y,t}F(\xi)
=\int_0^1 r_t(x,y,p,\xi)\big[F(\xi^{x,y,p})-F(\xi)\big] \,\dd p.}
\end{equation} Moreover, under $\QQ^{N,K}$, the empirical jump measure $\mu^N$ has predictable intensity
\[
{N^2 r_t(x,y,p,\xi^N_{t-})\,\dd p\,\dd t,}
\]
and the initial condition is distributed $$\xi^N_0\sim \nu^N_{u_0, a} $$ as a slowly varying local equilibrium with mass conditioned not to exceed $a$, as in Lemma \ref{lem:svle}.
\end{lemma}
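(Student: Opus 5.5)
The plan is to treat the two factors of $\frac{d\QQ^{N,K}}{d\PP}=Y^N_0Z^N_T$ separately, since $Y^N_0$ depends only on the initial configuration and $Z^N_T$ is a Doléans-Dade exponential for the marked point process $\mu^N$. This follows the structure of \cite[Appendix 1, Proposition 7.3]{kipnis1998scaling}.

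\textbf{Step 1: the initial law.} For any bounded $F$ of $\xi^N_0$,
\[
\EE_{\PP}[Y^N_0F(\xi^N_0)]=\int F\,Y^N_0\,d\nu^N_\rho .
\]
The exponential prefactor in \eqref{eq:RND-tilt-1} is exactly the product over sites of the density ratio
\[
\frac{u_0(x)^{-1}e^{-\xi(x)/u_0(x)}}{\rho^{-1}e^{-\xi(x)/\rho}},
\]
so it turns $\nu^N_\rho$ into $\nu^N_{u_0}$. The indicator divided by its $\nu^N_{u_0}$-probability then conditions on $\{\langle 1,\pi_N\rangle\le a\}$. Hence the initial law under $\QQ^{N,K}$ is $\nu^N_{u_0,a}$, and in particular $\EE[Y^N_0]=1$. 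Since $\langle 1,u_0\rangle<a$, the normalising probability is positive by Lemma \ref{lem:svle}.

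\textbf{Step 2: $Z^N$ is a true martingale.} The key observation is that the truncation makes the log-intensity bounded: $|\vartheta^N_t|\le \|\chi_K\|_\infty\,\|\nabla H\|_\infty N^{-1}K$. This uses that $\chi_K=A_K(u)$ is bounded because $u\in[m,M]$, and that $|H(t,x)-H(t,y)|\le \|\nabla H\|_\infty/N$. It follows that $r$ is bounded above and below uniformly. Since the number of edges is finite for fixed $N$, the compensator $N^2\sum_{x\sim y}\int(r-1)$ is bounded on $[0,T]$. Then $Z^N$, the stochastic exponential of the compensated integral of $r-1$ against $\mu^N-\bar\mu^N$, is a local martingale satisfying a Novikov-type bounded-jump condition, hence a genuine mean-one martingale. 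Conditioning on $\mathcal F_0$ and using the independence structure, $\EE[Y^N_0Z^N_T]=1$.

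\textbf{Step 3: the tilted intensity and generator.} Under $\QQ^{N,K}$, the jump-measure Girsanov theorem (equivalently, a direct check that $\mu^N-N^2r\,dp\,dt$ is a $\QQ$-martingale, via Itô's product rule for $Z^N$ times the $\PP$-compensated measure) gives predictable intensity $N^2r_t(x,y,p,\xi^N_{t-})\,dp\,dt$. Since $\xi^N$ is a deterministic functional of $\xi^N_0$ and $\mu^N$, with jumps $\xi\mapsto\xi^{x,y,p}$, the Markov property and the generator \eqref{eq:tilted-generator-1}–\eqref{eq:tilted-generator-2} follow from the martingale problem. Concretely, for bounded $F$, the process
\[
F(\xi_t)-\int_0^t\cL^{N,H,K}_sF(\xi_s)\,ds
\]
is a $\QQ$-martingale. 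Uniqueness of the martingale problem for bounded time-dependent rates identifies the law.

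\textbf{Main obstacle.} The only delicate point is Step 2, the true-martingale property. The truncation $\tau_K$ is precisely what makes it routine here: without it, rates of size $e^{c\xi}$ could fail to be integrable. I therefore expect no real difficulty, and the lemma is essentially a citation.
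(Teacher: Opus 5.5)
Your proposal is correct and takes the same route as the paper, which simply invokes the Girsanov theorem for jump processes \cite[Appendix 1, Proposition 7.3]{kipnis1998scaling}. Your Steps 1--3 unpack that citation faithfully, with the truncation $\tau_K$ supplying the uniform bound $|\vartheta^N_t|\le C_{H,K}N^{-1}$ that makes $Z^N$ a true martingale; one trivial quibble is that $\nu^N_{u_0}(\langle 1,\pi_N(\xi)\rangle\le a)>0$ holds directly for every $N$ and every $a>0$, since Lemma \ref{lem:svle} only shows that this probability tends to $1$.
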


We record the following easy estimate for later convenience, see also Lemma \ref{lem:entropy-est} for a refined version of the same argument. \begin{lemma}\label{lem:ent-crude}\begin{equation}\label{eq:entropy-prebound-QNK}
    \limsup_{N\to\infty}
    \frac1{N^d}\Ent(\QQ^{N,K}|\PP)
    \le C_K .
\end{equation} \end{lemma}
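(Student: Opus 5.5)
The plan is to split the relative entropy, using the product structure of $d\QQ^{N,K}/d\PP=Y^N_0Z^N_T$, into an initial part and a dynamic part:
\[
\Ent(\QQ^{N,K}|\PP)=\E_{\QQ^{N,K}}[\log Y^N_0]+\E_{\QQ^{N,K}}[\log Z^N_T].
\]
Both contributions will be bounded by $C_KN^d$.

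\textbf{Initial part.} By Lemma~\ref{lem:girsanov}, $\xi^N_0\sim\nu^N_{u_0,a}$ under $\QQ^{N,K}$. Hence $\log Y^N_0$ equals
\[
-\sum_x\big[\xi^N_0(x)(u_0(x)^{-1}-\rho^{-1})+\log(u_0(x)/\rho)\big]-\log\nu^N_{u_0}\big(\langle 1,\pi_N\rangle\le a\big).
\]
We control the pieces as follows.
\begin{itemize}
\item The coefficients $u_0^{-1}-\rho^{-1}$ and $\log(u_0/\rho)$ are bounded, since $m\le u_0\le M$.
\item $\E_{\nu^N_{u_0,a}}[\xi^N_0(x)]\le 2\,\E_{\nu^N_{u_0}}[\xi^N_0(x)]\le 2M$ once $N$ is large. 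This uses Lemma~\ref{lem:svle}: since $a>\langle 1,u_0\rangle$, the conditioning probability tends to $1$ by the law of large numbers.
\item For the same reason, the last logarithm is $o(1)$.
\end{itemize}
Together these give $\E_{\QQ}[\log Y^N_0]\le C N^d$.

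\textbf{Dynamic part.} Under $\QQ^{N,K}$ the compensator of $\mu^N$ is $N^2r\,dp\,dt$, so the martingale part of $\log Z^N_T$ has zero $\QQ$-mean. This leaves
\[
\E_\QQ[\log Z^N_T]=\E_\QQ\int_0^T N^2\sum_{x\sim y}\int_0^1\big(r\log r-r+1\big)\,dp\,dt.
\]
The key observation is that $\vartheta^N_t$ is uniformly small:
\[
|\vartheta^N_t|\le\|\chi_K\|_\infty\,|H(t,x)-H(t,y)|\,K\le C_K N^{-1},
\]
because $H$ is smooth, $\chi_K=A_K(u)$ is bounded on $[m,M]$, and $\tau_K\le K$. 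The integrand $e^\vartheta\vartheta-e^\vartheta+1$ is at most $C\vartheta^2$ for $|\vartheta|\le1$. Each edge therefore contributes at most $C_KN^{-2}$. Multiplying by $N^2$ and summing over the $dN^d$ edges gives $\E_\QQ[\log Z^N_T]\le C_KTdN^d$.

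Adding the two parts, dividing by $N^d$ and taking $\limsup$ yields \eqref{eq:entropy-prebound-QNK}.

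The only mildly delicate point is justifying that the stochastic-integral term has zero $\QQ$-mean. Since $\log r$ is bounded deterministically by $C_K/N$ and the intensity is bounded by $N^2e^{C_K/N}$ per edge, the integral is a true $\QQ$-martingale with integrable compensator. No moment bounds on $\xi^N$ are needed, because the cutoff $\tau_K$ removes all dependence on large energies.
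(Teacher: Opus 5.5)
Your proposal is correct and follows the paper's proof essentially step for step. You split the entropy into $\E_{\QQ^{N,K}}[\log Y^N_0]+\E_{\QQ^{N,K}}[\log Z^N_T]$, bound the first using the boundedness of $u_0^{-1}-\rho^{-1}$ and $\log(u_0/\rho)$, and bound the second via the compensated form $N^2\int\sum(r\log r-r+1)$ together with $|\vartheta^N_t|\le C_{H,K}N^{-1}$. The only cosmetic differences are these: the paper controls $\sum_x\xi^N_0(x)$ directly through the almost-sure mass bound $\langle 1,\pi_N(\xi^N_0)\rangle\le a$ rather than through your first-moment bound under $\nu^N_{u_0,a}$, and you treat the conditioning term $-\log\nu^N_{u_0}(\langle 1,\pi_N\rangle\le a)$ and the zero mean of the stochastic integral slightly more explicitly than the paper does.
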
\begin{proof}
By definition, \begin{equation}\Ent(\QQ^{N,K}|\PP)=\EE_{\QQ^{N,K}}[\log Y^N_0]+\EE_{\QQ^{N,K}}[\log Z^N_T]. \end{equation} Since $u_0$ is strictly positive and the mass is $\QQ^{N,K}$-almost surely bounded, the first term is at most $$ \EE_{\QQ^{N,K}}[\log Y^N_0] \le N^d\left(a\left\|\frac{1}{u_0}-\frac1\rho\right\|_{L^\infty_x}  + \left\|\log\frac{u_0}{\rho}\right\|_{L^\infty_x}\right)$$ which is of the desired order. For the dynamic part, using the $\QQ^{N,K}$-predictable intensity identified in Lemma \ref{lem:girsanov}, the contribution to \eqref{eq:entropy-prebound-QNK} is $$\EE_{\QQ^{N,K}}[\log Z^N_T]=\EE_{\QQ^{N,K}}\left[N^2\int_{(0,T]\times[0,1]}\sum_{x\sim y} (r_s\log r_s-r_s+1)(x,y,p,\xi^N_s) \dd p \dd s\right]. $$ As in \eqref{eq:theta-small}, $
    |\vartheta_t^N(x,y,p,\xi)|\le C_{H,K}N^{-1}
$, and so the integrand is of order
\[
    r_t\log r_t-r_t+1\le C_{H,K}N^{-2}
\]
uniformly in $t,\{x,y\},p$ and $\xi$. Summing over the $dN^d$ edges $x\sim y$ produces the result. \end{proof}

{\subsection{Technical Inputs} \label{subsec:technical} We next give the statements of two technical lemmata for the probability measures $\QQ^{N,K}$ which will be needed in the proofs. To ease readability, the proofs are deferred until Subsection~\ref{subsec:proof-technical-inputs}. We begin with a replacement lemma. \\\\We say that a function $F:\Lambda_N\to\RR$ is {\em local} if it only depends on the values of  $\xi(x), x\in A$ for a fixed, finite set $A$, and (locally) Lipschitz if its dependence in those coordinates is (locally) Lipschitz continuous $F:[0,\infty)^A\to \RR$. In the applications below we shall only use local Lipschitz functions with at most linear growth in these coordinates. For such a function, we define $\bar{F}(\rho)$ by its expectation $$\bar{F}(\rho):=\EE_{\nu^N_\rho}[F(\xi)]$$ under an equilibrium measure. For a configuration $\xi^N$, we define the local average $$\bar{\xi}^{N\eps}(x):=\frac{1}{(2\lfloor N\eps\rfloor+1)^d}\sum_{y\in \mathcal{B}(x,N\epsilon)}\xi(y)$$ where $\mathcal{B}(x,N\epsilon)$ is the lattice box of side length $2\lfloor N\epsilon\rfloor +1$ centred at $x$. For $y\in \TND$, we define $\tau_y:\Lambda_N\to\Lambda_N$ to be the translation $\tau_y\xi^N(x):=\xi^N(x+y)$, and $\tau_yF:=F\circ \tau_y$. For a test function $\varphi \in C([0,T]\times\TTd)$, we define \begin{equation} \label{eq: def V} V^{F,N,\varphi}_\eps(t,\xi):=\frac1{N^d}\sum_{x\in\TND}
\varphi(t,x)
\big[
\tau_xF(\xi_t^N)-\bar F(\bar\xi^{N\eps}_t(x))
\big]. \end{equation} With these defined, we give the statement of the replacement lemma.  \begin{proposition}[Replacement Estimate]\label{prop:replacement} Let $F$ be local, locally Lipschitz and satisfy the growth bound $|F(\xi)|\le C(1+\sum_{x\in A}\xi_x)$, for some finite set $A$. For fixed $\varphi\in C([0,T]\times\TTd)$, fixed $K$, and fixed $\delta>0$, it holds that \begin{equation}
\label{eq:replacement-statement} \limsup_{\eps\downarrow0} \limsup_N\QQ^{N,K}\left(\left|\int_0^T V^{F,N,\varphi}_\eps(t,\xi^N_t)\dd t \right|>\delta\right)=0.
\end{equation} \end{proposition}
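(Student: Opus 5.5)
The plan follows the strategy sketched in the overview of Section~\ref{sec:restricted-lb}. Truncate $F$ at a level $L$, use the superexponential replacement lemma for bounded Lipschitz functions under $\PP$ together with the entropy bound of Lemma~\ref{lem:ent-crude} to transfer it to $\QQ^{N,K}$, and control the truncation error with an integrability estimate for $\xi^N$ that holds specifically under $\QQ^{N,K}$.

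\emph{Step 1: truncation.} For $L>0$ let $F_L(\xi):=F(\xi\wedge L)$, the cutoff applied coordinatewise on $A$. This function is bounded and Lipschitz, so the classical superexponential estimate \eqref{eq: superexponential prototype} holds for $F_L$ under $\PP$. Write
\[
V^{F,N,\varphi}_\eps = V^{F_L,N,\varphi}_\eps + \frac1{N^d}\sum_x\varphi(t,x)\big[\tau_x(F-F_L)(\xi^N_t)\big] - \frac1{N^d}\sum_x\varphi(t,x)\big[(\bar F-\bar F_L)(\bar\xi^{N\eps}_t(x))\big].
\]

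\emph{Step 2: transfer to $\QQ^{N,K}$.} Apply the entropy inequality $\QQ(B)\le \frac{\log 2+\Ent(\QQ|\PP)}{\log(1+1/\PP(B))}$ to $B=\{|\int_0^TV^{F_L,N,\varphi}_\eps dt|>\delta/3\}$. Lemma~\ref{lem:ent-crude} gives $\Ent\le C_KN^d$, while $\PP(B)\le e^{-N^dM}$ for any $M$ once $\eps$ is small and $N$ large. Hence $\QQ^{N,K}(B)\le C_K/M+o(1)$, which tends to $0$.

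\emph{Step 3: truncation errors.} Using the linear growth of $F$ and the Lipschitz property, $|F-F_L|\le C\sum_{y\in A}\xi(y)1(\xi(y)>L)$, which is at most $CL^{-1}\sum_{y\in A}\xi(y)^2$. Similarly, $|\bar F-\bar F_L|(\rho)\le C\rho e^{-L/\rho}$, which is small uniformly for $\rho\le L^{1/2}$. For large $\rho$ it is bounded by $C\rho^2/L$. Both error terms are therefore bounded by
\[
\frac{C\|\varphi\|_\infty}{L}\int_0^T\frac1{N^d}\sum_x\xi^N_t(x)^2\,dt + o_L(1),
\]
using Jensen's inequality, $(\bar\xi^{N\eps})^2\le \overline{\xi^2}^{N\eps}$, for the averaged term. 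By Markov's inequality it is thus enough to prove the key estimate
\[
\sup_N \EE_{\QQ^{N,K}}\Big[\int_0^T\frac1{N^d}\sum_x\xi^N_t(x)^2dt\Big]\le C_K<\infty,
\]
after which we send $N\to\infty$, then $\eps\to0$, then $L\to\infty$.

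\emph{Step 4: the $L^2$ bound under $\QQ^{N,K}$. This is the main obstacle.} Compute $\cL^{N,H,K}_t$ applied to $\Phi(\xi)=\frac1{N^d}\sum_x\xi(x)^2$. The unperturbed part $N^2\sum L_{x,y}\Phi$ is nonpositive up to a bounded multiple: a KMP update of $(a,b)$ gives $\E[p^2+(1-p)^2](a+b)^2-a^2-b^2=-\frac13(a-b)^2+\dots$. More precisely, the second moment is not conserved, but it is dissipated by averaging. The tilt contributes $N^2\int(r-1)[\Phi(\xi^{x,y,p})-\Phi(\xi)]dp$. Since $|\vartheta|\le C_{H,K}N^{-1}$, we have $r-1=\vartheta+O(N^{-2})$. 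The change $\Phi(\xi^{x,y,p})-\Phi(\xi)$ is $N^{-d}O((\xi(x)+\xi(y))^2)$, and $\vartheta$ is of size $N^{-1}K|\nabla H|$, linear in the bounded variables $\tau_K\xi$. The leading antisymmetric part in $p$ pairs against $(\xi(x)-\xi(y))$ terms. I expect that one only obtains the bound $\cL^{N,H,K}\Phi\le C_KN\cdot N^{-d}\sum|\xi(x)-\xi(y)|(\xi(x)+\xi(y))$. This should be absorbed by the dissipation $-cN^2N^{-d}\sum(\xi(x)-\xi(y))^2$ via Young's inequality, leaving $C_K\Phi$. Gronwall's inequality, combined with $\EE_{\nu^N_{u_0,a}}\Phi\le C$, then closes the estimate. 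If the dissipation term turns out to be unhelpful, the fallback is cruder. Since $|\vartheta|\le C/N$ deterministically, the tilted rates are bounded by $e^{C/N}$, so $\Phi$ grows at most like $e^{Ct}$ in expectation because of the bounded drift. In either case, the cutoff $K$ is precisely what makes the drift bounded and lets the estimate close.
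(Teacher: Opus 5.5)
Your Steps 1--3 are the paper's own proof. The paper truncates to $F(\xi\wedge L)$ and transfers Lemma~\ref{lem:supex} to $\QQ^{N,K}$ using the entropy inequality and Lemma~\ref{lem:ent-crude}. It then controls both truncation errors (with Jensen for the block average) by Markov's inequality and the time-integrated bound $\sup_N\EE_{\QQ^{N,K}}\int_0^T\|\xi^N_t\|_{L^2_N}^2\,dt<\infty$. That bound is exactly Proposition~\ref{prop:l2-estimate}, and citing it would finish the proof.

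\textbf{The gap is Step 4.} Your own proof of this bound does not work, because $\Phi=\|\xi\|^2_{L^2_N}$ satisfies no differential inequality with $N$-independent constants. A KMP update changes $\Phi$ by $N^{-d}O(s_{xy}^2)$, where $s_{xy}=\xi(x)+\xi(y)$, with no cancellation in $p$. The term hidden in your ``$\dots$'' is of order $N^2\Phi$; precisely,
\[
N^2\sum_{x\sim y}L_{x,y}\Phi(\xi)=\frac{N^2}{N^d}\sum_{x\sim y}\Big[-\tfrac13\big(\xi(x)-\xi(y)\big)^2+\tfrac23\,\xi(x)\xi(y)\Big],
\]
which equals $\frac{2d}{3}N^2\Phi$ on a flat configuration. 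The microscopic second moment is not dissipated. It relaxes on the time scale $N^{-2}$ to its local-equilibrium value $\approx 2u^2$, so it is not a slow variable and Gronwall cannot close.

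The tilt term is also worse than you expect. After integrating in $p$ it equals
\[
\frac{N^2}{N^d}\sum_{x\sim y}\chi_K(t,m(x,y))\big(H(t,x)-H(t,y)\big)\big(\tau_K\xi(y)-\tau_K\xi(x)\big)\big[\tfrac13 s_{xy}^2-\tfrac12(\xi(x)^2+\xi(y)^2)\big].
\]
Per edge this is of size $N^{1-d}\min(|\xi(y)-\xi(x)|,K)\,s_{xy}^2$. That is cubic, not the $N|\xi(y)-\xi(x)|s_{xy}$ you guessed.

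Your fallback fails as well, for three reasons:
\begin{enumerate}
\item The obstruction is already present for the untilted generator.
\item Per-jump rate ratios $e^{\pm C/N}$ only give a Radon--Nikodym derivative of size $e^{O(N^d)}$.
\item Transferring $\EE_\PP\Phi=2\rho^2$ to $\QQ^{N,K}$ through the entropy inequality would need exponential moments of $\sum_x\int_0^T\xi^N_t(x)^2\,dt$ under $\PP$. These are infinite for exponential marginals, and this is precisely the missing superexponential estimate the paper works around.
\end{enumerate}

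\textbf{The missing idea.} Work with a functional of a smoothed field, namely the massive $H^{-1}$ norm $F_N(\xi)=\langle\xi,G_N\xi\rangle_N$ with $(-\Delta_N+1)G_N=\mathrm{Id}$. Then extract $\|\xi\|^2_{L^2_N}$ from its dissipation, rather than bounding it pointwise in time. For $F_N$ the three contributions behave as follows.
\begin{enumerate}
\item The mean increment gives exactly $F_N-\|\xi\|^2_{L^2_N}$ after summation by parts.
\item The quadratic increment carries the factor $\Gamma_N=G_N(0)-G_N(e_1/N)\le N^{d-2}/(2d)$. This cancels the rate $N^2$ and leaves at most $\frac56\|\xi\|^2_{L^2_N}$, even with $r\le\frac54$.
\item The deviation $r-1=O(N^{-1})$ only needs tracking in the linear increment. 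There it meets the discrete gradient of $G_N\xi$, whose $L^2_N$ norm is at most $CF_N^{1/2}$. The resulting error $C\|\xi\|_{L^2_N}F_N^{1/2}$ is absorbed by Young's inequality.
\end{enumerate}
Together these give $\cL^{N,H,K}_tF_N\le CF_N-c\|\xi\|^2_{L^2_N}$, which is Lemma~\ref{lem:quadratic-generator}. Integrate against $e^{-Ct}$, drop $F_N(\xi^N_T)\ge0$, and bound $\EE F_N(\xi^N_0)$ under $\nu^N_{u_0,a}$; this yields the time-integrated $L^2$ bound your Step 3 needs.
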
 The second key step is the following $L^2_{t,x}$-estimate. \begin{proposition}[Microscopic energy estimate]\label{prop:l2-estimate}
Under the measures $\QQ^{N,K}$ defined above,
\begin{equation}\label{eq:l2-estimate-main}
{\limsup_{N\to\infty}\E_{\QQ^{N,K}}\Big[\int_0^T \|\xi_t^N\|_{L^2_N}^2\,\dd t\Big]<\infty.}
\end{equation}
\end{proposition}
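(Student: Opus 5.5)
We prove the microscopic energy estimate of Proposition~\ref{prop:l2-estimate} by a Lyapunov-function argument applied to the tilted dynamics, using the second moment functional
\[
\Phi(\xi):=\|\xi\|_{L^2_N}^2=\frac1{N^d}\sum_{x\in\TND}\xi(x)^2 .
\]
The point is that the KMP dynamics dissipate $\Phi$ at a rate comparable to $\Phi$ itself once the diffusive scaling is accounted for, while the tilt in $\QQ^{N,K}$ only perturbs the rates by $O(N^{-1})$ and uses the truncated $\tau_K$. We will therefore control $\Phi$ through the bounded initial mass and bounded drift, together with a Dynkin formula.

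\textbf{Step 1: a priori bounds.} Under $\QQ^{N,K}$, the initial law is $\nu^N_{u_0,a}$ by Lemma~\ref{lem:girsanov}, so $\EE[\Phi(\xi_0)]\le C\|u_0\|_\infty^2$ uniformly in $N$. Moreover, the total mass is almost surely at most $a$ for all times.

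\textbf{Step 2: the untilted generator.} For a single edge,
\[
L_{x,y}(\xi(x)^2+\xi(y)^2)=\tfrac23 s^2-(\xi(x)^2+\xi(y)^2),\qquad s=\xi(x)+\xi(y),
\]
which equals $-\frac13(\xi(x)-\xi(y))^2$. Hence
\[
N^2\cL\Phi=-\frac{N^2}{3N^d}\sum_{x\sim y}(\xi(x)-\xi(y))^2\le 0 .
\]
This term is a discrete Dirichlet form. It does not give $\Phi$ directly, but it dominates any tilt contribution that is of the form gradient times energy.

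\textbf{Step 3: the tilt correction.} Since $|\vartheta|\le C_{H,K}N^{-1}$, we have $r-1=\vartheta+O(N^{-2})$. The extra term $N^2\sum\int(r-1)[\Phi(\xi^{x,y,p})-\Phi(\xi)]\,dp$ is therefore bounded by
\[
C N\,N^{-d}\sum_{x\sim y}|\Delta\Phi_{xy}|\,\sup|\nabla H|\,K/N + O(1)\cdot N^{-d}\sum s^2 .
\]
We need to compute the first-order term more carefully. Integrating in $p$, the jump in $\xi(x)^2$ paired with $p\tau_K(\xi(y))-(1-p)\tau_K(\xi(x))$ produces terms $\lesssim K\,s\,|\xi(x)-\xi(y)|+K s^2/N$ after using the antisymmetry and $H(x)-H(y)=O(1/N)$. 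We then apply Young's inequality,
\[
N K s|\xi(x)-\xi(y)|\cdot N^{-1}\le \tfrac16 N^2(\xi(x)-\xi(y))^2\cdot N^{-2}\cdots .
\]
Precisely, the drift contribution is bounded by $\frac{N^2}{6}\sum(\xi(x)-\xi(y))^2\,N^{-d}+C_{H,K}\Phi$. This is the main obstacle: we must verify that the tilt gives only a term linear in $\Phi$ plus something absorbable in the Dirichlet form, and not a term like $N\Phi$. The truncation at level $K$ and the fact that the tilt rate is $O(1/N)$ are exactly what make this work.

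\textbf{Step 4: Gronwall.} Steps 2 and 3 give $\cL^{N,H,K}\Phi\le C_{H,K}\Phi$, and Gronwall then bounds $\EE\Phi(\xi_t)$. The one caveat is that the estimate must be closed with a mass-based term rather than circularly. If Step 3 instead yields $\cL^{N,H,K}\Phi\le C\Phi$, Dynkin's formula (with localisation by stopping times) and Gronwall give $\sup_t\EE_{\QQ^{N,K}}\Phi(\xi_t)\le e^{CT}\EE\Phi(\xi_0)$. Integrating over $[0,T]$ then yields \eqref{eq:l2-estimate-main}.

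As a fallback, if the pointwise generator bound fails, we will instead use the entropy inequality. By Lemma~\ref{lem:ent-crude}, $\Ent(\QQ^{N,K}|\PP)\le C_KN^d$, and under $\PP$ (equilibrium) the variables $\xi(x)$ are exponential. The only difficulty is that $\int\Phi\,dt$ has Gaussian-type rather than exponential moments at scale $N^d$. We would therefore truncate: we control $\min(\xi,L)^2$ via entropy, and handle the tail $\xi\ge L$ through the mass bound combined with the dissipation from Step 2.
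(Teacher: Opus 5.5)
\textbf{Step 2 contains an algebra error, and it breaks the Gronwall strategy.} Write $a=\xi(x)$, $b=\xi(y)$, $s=a+b$. Then
\[
\tfrac23 s^2-(a^2+b^2)=\tfrac13\big(4ab-a^2-b^2\big)=-\tfrac13(a-b)^2+\tfrac23ab,
\]
not $-\frac13(a-b)^2$; at $a=b$ the left side is $\frac23a^2>0$. Summing over edges, with the paper's generator $\cL^N_t$ (which already contains $N^2$),
\[
\cL^N_t\Phi(\xi)=\frac{2N^2}{3}\Big(d\,\Phi(\xi)-\frac1{N^d}\sum_{x\sim y}\big(\xi(x)-\xi(y)\big)^2\Big).
\]
On a flat configuration this equals $\frac{2d}{3}N^2\Phi$.

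This is a defect of the functional, not of the bookkeeping. The KMP randomisation pushes $\xi(x)^2$ up to its local-equilibrium value (an exponential of mean $\rho$ has second moment $2\rho^2$) on the microscopic time scale $N^{-2}$. So the second-order (It\^o) part of $\cL^N_t\Phi$ is at least $\frac{2d}{3}N^2\Phi$, while the Dirichlet form can vanish. No bound $\cL^{N,H,K}_t\Phi\le C\Phi$ with $C$ independent of $N$ can hold, and Step 4 has nothing to work with.

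\textbf{Step 3 has the same problem.} After symmetrising $p\mapsto 1-p$, the first-order tilt term is
$\frac{N^2}{2N^d}\sum_{x\sim y}\chi_K(t,m(x,y))\,(H(t,x)-H(t,y))\,(\tau_K(b)-\tau_K(a))\,(\tfrac23 s^2-a^2-b^2)$.
This is cubic in $\xi$ and only $O(NK\Phi)$. Young's inequality against the quadratic Dirichlet form leaves a quartic remainder, not $C\Phi$.

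\textbf{The fallback also fails.} For every $\gamma>0$, $\E_{\PP}\big[\exp\big(\gamma\int_0^T\sum_x\xi_t(x)^2\,dt\big)\big]=\infty$. Indeed, a single site holding energy $E$ for a time of order $N^{-2}$ has probability $\gtrsim e^{-E/\rho}$ and contributes $\gtrsim E^2N^{-2}$. The tail $\{\xi\ge L\}$ that you postpone is exactly the content of the proposition. The mass bound does not control it, and the dissipation you invoke from Step 2 does not exist.

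\textbf{The missing idea is to measure the energy one derivative lower.} The paper uses the massive $H^{-1}$-type functional $F_N(\xi)=\langle \xi,G_N\xi\rangle_N$, with $G_N$ the Green kernel of $-\Delta_N+1$. Three things happen for this functional:
\begin{enumerate}[label=(\roman*)]
\item By summation by parts, the first-order symmetric part is exactly $F_N-\|\xi\|_{L^2_N}^2$. The $L^2_N$ norm therefore appears as the \emph{dissipation}, rather than as the quantity to be Gronwalled.
\item The It\^o term carries the factor $\Gamma_N=G_N(0)-G_N(e_1/N)\le N^{d-2}/(2d)$. It totals at most $\frac56\|\xi\|_{L^2_N}^2$, strictly below the dissipation.
\item The tilt term involves only discrete gradients of the smoothed field $\psi=G_N\xi$. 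These supply the factor $N^{-1}$ that compensates $N^2\cdot O(N^{-1})$, giving the bound $C\|\xi\|_{L^2_N}F_N^{1/2}$, which Peter--Paul absorbs.
\end{enumerate}
Together these give $\cL^{N,H,K}_tF_N\le CF_N-c\|\xi\|_{L^2_N}^2$. Dynkin's formula for $e^{-Ct}F_N(\xi^N_t)$, using $F_N\ge 0$ and bounded $\E_{\QQ^{N,K}}F_N(\xi^N_0)$, then gives the time-integrated bound directly. This route controls only $\int_0^T\|\xi^N_t\|_{L^2_N}^2\,dt$, not the stronger $\sup_t\E\|\xi^N_t\|_{L^2_N}^2$ that your Gronwall argument aims for.
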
}

\subsection{Weak Solutions to the Truncated Fokker-Planck Equation}
We first make precise the nonlinear equation generated by the cutoff tilt. For fixed $K<\infty$, the function $\Theta_K$ defined in \eqref{eq:def-Theta-Gamma-K} is continuous on $[0,\infty)$, globally Lipschitz, and has linear growth. More precisely,
\[
\Theta_K(\rho)=\rho^2\Bigg[1-\Big(1+\frac{2K}{3\rho}\Big)e^{-K/\rho}\Bigg],
\qquad \rho>0,
\]
so that $\Theta_K(\rho)\sim \frac K3\rho$ as $\rho\to\infty$. We therefore set
\[
\Theta_K^\infty:=\lim_{\rho\to\infty}\frac{\Theta_K(\rho)}{\rho}=\frac K3.
\]
If $\xi\in \cM_+(\TTd)$ has Lebesgue decomposition $\xi=v\,dx+\xi^{\perp}$, we define the finite measure
\begin{equation}\label{eq:def-Theta-on-measures}
\Theta_K(\xi):=\Theta_K(v)\,dx+\Theta_K^\infty\xi^{\perp}.
\end{equation}
The linear growth of $\Theta_K$ ensures that this is well-defined and satisfies the growth and Lipschitz continuity
\begin{equation}\label{eq:Theta-measure-growth}
\langle 1,\Theta_K(\xi)\rangle\le C_K\langle 1,\xi\rangle;\qquad \|\Theta_K(\xi)-\Theta_K(\xi')\|_{\rm TV}\le C_K\|\xi-\xi'\|_{\rm TV}
\end{equation}
for a constant $C_K<\infty$ depending only on $K$. With this, we make precise the notion of weak, measure-valued solutions.

\begin{definition}\label{def:weak_measure_soln}
A trajectory $\xi\in C([0,T],\cM_+(\TTd))$ is a weak solution to the truncated Fokker--Planck equation
\begin{equation}\label{eq:tfp}
\partial_t v=\frac12\Delta v-\nabla\cdot\big(\chi_K\,\Theta_K(v)\nabla H\big)
\end{equation}
with initial datum $u_0\,dx$ if, for every $\varphi\in C_c^{1,2}([0,T)\times\TTd)$,
\begin{equation}\label{eq:weak-tfp}\begin{split}
\langle \varphi(0,\cdot),u_0\rangle&+
\int_0^T\Big\langle \partial_t\varphi(t,\cdot)+\frac12\Delta\varphi(t,\cdot),\xi_t\Big\rangle\,dt
\\&+\int_0^T\Big\langle \chi_K(t,\cdot)\nabla H(t,\cdot)\cdot \nabla\varphi(t,\cdot),\Theta_K(\xi_t)\Big\rangle\,dt=0.\end{split}
\end{equation}
\end{definition}

\begin{remark}\label{rmk:strong_implies_weak} If $\xi_t=v_t(x)dx$ for $v\in C^{1,2}([0,T]\times\TTd)$, it is straightforward to check that $\xi$ is a weak solution to \eqref{eq:tfp} if, and only if, $v$ is a solution to \eqref{eq:tfp} in the classical sense. 
	
\end{remark}

\begin{lemma}\label{lem:unique-tfp}
For any fixed $K<\infty$, any $u\in \cR$, and any $H\in C^\infty([0,T]\times\TTd)$, there exists at most one weak solution to \eqref{eq:tfp}. In particular, when $H$ is the control associated to $u$ by \eqref{eq:weighted-elliptic-lambda}, the trajectory $u\,dx$ is the unique weak solution to \eqref{eq:tfp}.
\end{lemma}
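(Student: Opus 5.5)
Fix $K$, $u$, $H$, and let $\xi,\xi'\in C([0,T],\cM_+(\TTd))$ be two weak solutions of \eqref{eq:tfp} with initial datum $u_0\,dx$. The equation is linear in the diffusion and Lipschitz in the drift nonlinearity, so the plan is a duality/mild-formulation Gronwall argument. I will work in total variation, which is the norm in which \eqref{eq:Theta-measure-growth} gives Lipschitz control of $\Theta_K$.

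\textbf{Mild formulation.} First I upgrade the weak formulation \eqref{eq:weak-tfp}. Test against $\varphi(s,x)=\eta(s)\,(P_{t-s}f)(x)$ for $s\le t$, where $P_r=e^{r\Delta/2}$ is the heat semigroup, $f\in C^\infty(\TTd)$, and $\eta$ approximates $1_{[0,t]}$. Using continuity of $s\mapsto\xi_s$ (with a standard cut-off in time so that $\varphi\in C^{1,2}_c([0,T)\times\TTd)$), this gives
\[
\langle f,\xi_t\rangle=\langle P_tf,u_0\rangle+\int_0^t\big\langle \chi_K\nabla H\cdot\nabla P_{t-s}f,\ \Theta_K(\xi_s)\big\rangle\,ds .
\]
Since $\Theta_K(\xi_s)$ is a finite measure with mass at most $C_K\langle1,\xi_s\rangle$ and $\chi_K\nabla H$ is bounded, the integrand is controlled by $\|\nabla P_{t-s}f\|_{L^\infty}\le C(t-s)^{-1/2}\|f\|_{L^\infty}$, which is integrable. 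The total mass is conserved: take $f=1$. So the formula makes sense, and it identifies $\xi_t$ as $P_tu_0-\int_0^t\nabla P_{t-s}^*\!\cdot(\chi_K\nabla H\,\Theta_K(\xi_s))\,ds$.

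\textbf{Gronwall step.} Subtract the two mild formulas and take the supremum over $\|f\|_{L^\infty}\le 1$. Since $C(\TTd)$ is norming for TV, this gives
\[
\|\xi_t-\xi'_t\|_{\rm TV}\le C_{H,K}\int_0^t (t-s)^{-1/2}\|\Theta_K(\xi_s)-\Theta_K(\xi'_s)\|_{\rm TV}\,ds\le C\int_0^t (t-s)^{-1/2}\|\xi_s-\xi'_s\|_{\rm TV}\,ds,
\]
where the second inequality is \eqref{eq:Theta-measure-growth}. The quantity $\sup_s\|\xi_s-\xi'_s\|_{\rm TV}$ is finite, since both solutions have the same conserved mass. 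A singular Gronwall inequality (iterate once to get a bounded kernel, or argue on a short interval $[0,\tau]$ with $2C\sqrt\tau<1$ and then step forward) gives $\xi\equiv\xi'$.

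\textbf{Main obstacle.} The delicate point is measurability and regularity of $s\mapsto\Theta_K(\xi_s)$. Weak continuity of $\xi$ does not make the Lebesgue decomposition continuous, so I must check that the pairing $s\mapsto\langle \chi_K\nabla H\cdot\nabla P_{t-s}f,\Theta_K(\xi_s)\rangle$ is measurable. This should follow because the term already appears in Definition~\ref{def:weak_measure_soln}, and because $\Theta_K$ is measurable on $\cM_+$, via Radon--Nikodym derivatives obtained as limits of ratios of mollifications. A second point is justifying the test function $P_{t-s}f$, which is singular as $s\uparrow t$. I would handle this by using $P_{t-s+\epsilon}f$ and letting $\epsilon\downarrow0$, using weak continuity at time $t$.

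\textbf{Identifying the solution.} For the final claim, $u\in\cR$ is smooth and positive, so $\Theta_K(u)$ is an ordinary function. By the choice $\chi_K=A_K(u)=u^2/\Theta_K(u)$,
\[
\chi_K\,\Theta_K(u)\nabla H=u^2\nabla H .
\]
Therefore \eqref{eq:tfp} for $u$ is exactly \eqref{eq:weighted-elliptic-lambda}, which holds classically by Lemma~\ref{lem:optimal-drift-smooth}. By Remark~\ref{rmk:strong_implies_weak}, $u\,dx$ is a weak solution, and by the uniqueness just proved it is the only one.
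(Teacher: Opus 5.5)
Your proposal is correct and follows essentially the same route as the paper's proof. The shared steps are: the mild formulation obtained by testing against $P_{t-s}f$; a total-variation estimate combining the Lipschitz bound \eqref{eq:Theta-measure-growth} with the $(t-s)^{-1/2}$ heat-semigroup gradient bound, used in its dual form $\|\nabla P_{t-s}f\|_{L^\infty}\le C(t-s)^{-1/2}\|f\|_{L^\infty}$ rather than the paper's TV-smoothing form; a singular Gr\"onwall inequality closed by iterating once; and identification of $u\,dx$ as the solution via $\chi_K\Theta_K(u)=u^2$ and Remark~\ref{rmk:strong_implies_weak}. Your extra remarks on measurability of $s\mapsto\Theta_K(\xi_s)$ and on the cut-off in time fill in details the paper leaves implicit without changing the argument.
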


\begin{proof}
Set
\[
b(t,x):=\chi_K(t,x)\nabla H(t,x),
\qquad (t,x)\in [0,T]\times\TTd.
\]
Since $u$ and $H$ are smooth and $\chi_K=A_K\circ u$, the vector field $b$ is smooth and bounded.

Let $P_t=e^{\frac t2\Delta}$ denote the heat semigroup on $\TTd$. We first show that every weak solution admits the mild formulation
\begin{equation}\label{eq:mild-tfp}
\langle \phi,\xi_t\rangle
=\langle P_t\phi,u_0\rangle+
\int_0^t\big\langle b(s,\cdot)\cdot \nabla P_{t-s}\phi,\Theta_K(\xi_s)\big\rangle\,ds,
\qquad \phi\in C^2(\TTd).
\end{equation}
This follows by taking, for fixed $t\in[0,T]$, smooth test functions in \eqref{eq:weak-tfp} approximating\[
\varphi(s,x):=\mathbf 1_{[0,t]}(s) P_{t-s}\phi(x).
\]


Let $\xi, \xi'$ be two weak solutions. 
Subtracting the two mild formulations \eqref{eq:mild-tfp}, using the Lipschitz estimate \eqref{eq:Theta-measure-growth} and using the TV-smoothing estimate
\begin{equation}\label{eq:l1-smoothing}
\|\nabla P_t \xi\|_{\rm TV}\le C t^{-1/2}\|\xi\|_{\rm TV},
\qquad t\in (0,T],
\end{equation}
we obtain
\begin{align*}
\|\xi_t-\xi'_t\|_{\rm TV}
&\le \int_0^t \big\|\nabla P_{t-s}\big(b(s,\cdot)(\Theta_K(\xi_s)-\Theta_K(\xi'_s))\big)\big\|_{\rm TV}\,ds \\
&\le C_K \|b\|_\infty \int_0^t (t-s)^{-1/2}\|\xi_s-\xi'_s\|_{\rm TV}\,ds.
\end{align*}
This implies that the difference satisfies, for a new constant $C$,
\[
\|\xi_t-\xi'_t\|_{\rm TV}\le C\int_0^t (t-s)^{-1/2}\|\xi_s-\xi'_s\|_{\rm TV}\,ds.
\]
Iterating this inequality once gives
\[
\|\xi_t-\xi'_t\|_{\rm TV}\le C^2\int_0^t\int_0^s (t-s)^{-1/2}(s-r)^{-1/2}\|\xi_r-\xi'_r\|_{\rm TV}\,dr\,ds
= C^2\pi\int_0^t \|\xi_r-\xi'_r\|_{\rm TV}\,dr,
\]
which implies that $\xi=\xi'$ by Gr\"onwall's lemma.\\

Finally, for the given trajectory $u$ and associated control $H$, we recall $\chi_K\Theta_K(u)=u^2$ by the definition of $\chi_K=A_K\circ u$. By definition of $H$, $u$ is a classical solution to 
\[
\partial_t u=\frac12\Delta u-\nabla\cdot(u^2\nabla H)
\]
and the second term on the right-hand side is equal to $-\nabla\cdot(\chi_K \Theta_K(u)\nabla H)$. The given trajectory $u$ is therefore a weak solution to \eqref{eq:tfp} by Remark \ref{rmk:strong_implies_weak}, and hence the unique weak solution, in the sense of Definition \ref{def:weak_measure_soln}.
\end{proof}

\subsection{Convergence to the Specified Path}
The remainder of this subsection is dedicated to the following, which shows that each family of measures $\QQ^{N,K}$ produce a tilt under which the limit path is typical.
\begin{proposition}
\label{prop:convergence-under-Q}
Under the measures $\QQ^{N,K}$, the processes $\xi^N$ converge to $u$ in the topology of $\cX$.
\end{proposition}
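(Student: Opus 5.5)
The plan is the standard route: tightness, identification of every limit point as a weak solution of \eqref{eq:tfp}, and uniqueness via Lemma~\ref{lem:unique-tfp}. Because the limit $u\,dx$ is deterministic, convergence in law in $\cX$ is the same as convergence in probability.

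\textbf{Step 1 (tightness).} I would work in the stronger space $D([0,T],\cM_+(\TTd))$ with the distance $W$, which is what the definition of weak solution requires. Under $\QQ^{N,K}$ the total mass is conserved and bounded by $a$ (Lemma~\ref{lem:girsanov}), so each time-marginal lies in the compact set $\cM_a(\TTd)$. For $\phi\in C^2(\TTd)$ I would write the Dynkin decomposition under the tilted generator \eqref{eq:tilted-generator-1}: $\langle\phi,\pi^N(\xi^N_t)\rangle$ equals its initial value, plus $\int_0^t\cL^{N,H,K}_s\langle\phi,\pi^N\rangle ds$, plus a martingale $M^{N,\phi}_t$. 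The symmetric part of the generator gives the discrete Laplacian, bounded by $\|\phi\|_{C^2}a$. The tilt contributes
\[
N^2\sum_{x\sim y}\int_0^1(r-1)\,[\text{jump}]\,dp .
\]
Here $r-1=\vartheta+O(N^{-2})$ with $|\vartheta|\le C_{H,K}N^{-1}$, and the jump of $\langle\phi,\pi^N\rangle$ is $N^{-d}(\phi(x)-\phi(y))\cdot O(\xi(x)+\xi(y))$, which is $O(N^{-d-1}(\xi(x)+\xi(y)))$. So this drift is bounded by $C_{H,K}\|\phi\|_{C^1}a$, using the truncation $\tau_K$ on one factor and mass conservation on the other. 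The quadratic variation of $M^{N,\phi}$ is at most
\[
N^2\cdot N^{-2d-2}\sum(\xi(x)+\xi(y))^2 ,
\]
whose integral is $O(N^{-d}\int\|\xi_t\|^2_{L^2_N}dt)$. Its expectation vanishes by Proposition~\ref{prop:l2-estimate}. Aldous' criterion then gives tightness, and Doob's inequality shows every limit point is continuous in time. The initial law converges to $\delta_{u_0}$ by Lemma~\ref{lem:svle}.

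\textbf{Step 2 (identifying the drift).} Expanding $\vartheta$, the first-order tilt term for an edge $x\sim y$ is
\[
N\chi_K(t,m(x,y))\,N(H(x)-H(y))\cdot N(\phi(x)-\phi(y))\cdot N^{-d}\cdot F_{x,y}(\xi),
\]
up to $N^{-1}$ factors, where
\[
F_{x,y}(\xi)=\int_0^1 \bigl(p\tau_K(\xi(y))-(1-p)\tau_K(\xi(x))\bigr)\bigl(p(\xi(x)+\xi(y))-\xi(x)\bigr)\,dp .
\]
This $F_{x,y}$ is local, locally Lipschitz and of linear growth. A computation with independent exponentials of mean $\rho$ should give $\bar F(\rho)=\Theta_K(\rho)$, which is how $m^K_{1,1}$ and $m^K_1$ enter \eqref{eq:def-Theta-Gamma-K}. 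The $O(\vartheta^2)$ remainder carries an extra $N^{-1}$ and is handled by the same $L^2$ bound as the martingale. Proposition~\ref{prop:replacement} then replaces $F$ by $\Theta_K(\bar\xi^{N\eps}_t(x))$ with vanishing error in $\QQ^{N,K}$-probability.

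\textbf{Step 3 (passing to the limit).} This is the main obstacle: passing from $\Theta_K(\bar\xi^{N\eps})$ to $\Theta_K(\xi_t)$, where the latter is defined by the Lebesgue decomposition \eqref{eq:def-Theta-on-measures}. The map $\xi\mapsto\int\psi(x)\Theta_K(\xi*\iota_\eps)(x)\,dx$ is continuous for weak convergence at fixed $\eps$, since $\Theta_K$ is Lipschitz and $\xi\mapsto\xi*\iota_\eps$ is weak-to-uniform continuous on bounded sets. So as $N\to\infty$ we obtain $\Theta_K(\xi_t*\iota_\eps)$. It then remains to show that $\int\psi\,\Theta_K(\xi*\iota_\eps)\,dx\to\langle\psi,\Theta_K(\xi)\rangle$ as $\eps\to0$. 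On the absolutely continuous part this follows from Lebesgue differentiation. On the singular part I would use $\Theta_K(r)/r\to\Theta^\infty_K$, since the mollified density blows up on the support of $\xi^\perp$; this is Reshetnyak-type continuity for the linear-growth, perspective-convex functional. The contributions from regions where both parts are small are controlled by the Lipschitz bound.

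Combining the three steps, every limit point is a continuous weak solution of \eqref{eq:tfp} with initial datum $u_0$. By Lemma~\ref{lem:unique-tfp} it equals $u\,dx$, which gives convergence to $u$ in $\cX$.
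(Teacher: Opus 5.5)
Your proof is correct. It follows the paper's architecture: tightness from mass conservation and Proposition~\ref{prop:l2-estimate}, replacement of $\Psi_K$ (your $F_{x,y}$ is $-\Psi_K$) by $\Theta_K$ of the box average, and uniqueness from Lemma~\ref{lem:unique-tfp}. The one genuine divergence is your Step 3.

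The paper never meets a singular part. It combines Proposition~\ref{prop:l2-estimate} with lower semicontinuity of the $L^2_{t,x}$ norm to show that the limit $\Xi$ lies almost surely in $L^2_{t,x}$. So for a.e.\ $s$ one has $\iota_\eps*\Xi_s\to\Xi_s$ in $L^1_x$, and the Lipschitz bound \eqref{eq:Theta-measure-growth} finishes the argument.

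You instead take $\eps\to0$ for an arbitrary finite measure, using the linear recession of $\Theta_K$. This is more than the paper needs, but it is exactly why $\Theta_K(\xi)$ carries $\Theta_K^\infty$ on $\xi^\perp$. Your route also works without any a priori integrability of the limit. You do not need Reshetnyak-type continuity for this. The function $G(r):=\Theta_K(r)-\Theta_K^\infty r$ is bounded and continuous on $[0,\infty)$ (indeed $G(r)\to K^2/6$). Hence
\[
\int\psi\,\Theta_K(\xi*\iota_\eps)\,dx=\Theta_K^\infty\int\psi\,(\xi*\iota_\eps)\,dx+\int\psi\,G(\xi*\iota_\eps)\,dx .
\]
The first term converges by weak convergence. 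The second converges by bounded convergence, because $\xi*\iota_\eps\to v$ Lebesgue-a.e.\ by the differentiation theorem: the singular part has zero density a.e.

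One correction. Since $\iota_\eps$ is the normalised indicator of a cube $Q(x,\eps)$, the map $\xi\mapsto\xi*\iota_\eps$ is \emph{not} weak-to-uniform continuous; Dirac masses crossing the cube boundary are a counterexample. At fixed $\eps$ you only get $\bar\xi^{N\eps}_s(x)\to(\iota_\eps*\Xi_s)(x)$ for those $x$ with $\Xi_s(\partial Q(x,\eps))=0$. This excludes at most a Lebesgue-null set of $x$, since each coordinate hyperplane carries positive $\Xi_s$-mass for only countably many levels. Combined with the uniform bound $\bar\xi^{N\eps}\le C a\eps^{-d}$ and bounded convergence in $x$, this still gives convergence of the integrated functional. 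In the paper's route the issue does not arise, because $\Xi_s$ is absolutely continuous for a.e.\ $s$.
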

We divide the proof of Proposition \ref{prop:convergence-under-Q} into several lemmata. We first identify the action of the generator against smooth test functions. To do this, we define functions $\mathfrak{D}^{N,K}_{t,\varphi}(\xi)$ by \begin{equation}\label{eq:def-local-drift-functional} \mathfrak{D}^{N,K}_{t,\varphi}(\xi):=\frac1{N^{d-2}}\sum_{x\sim y}(\varphi(y)-\varphi(x))(H(t,x)-H(t,y))\Psi_K(\xi(x),\xi(y)) \end{equation} where we define \begin{equation}\label{eq:def-psiK-early}
\Psi_K(a,b):=\int_0^1 \big(p\tau_K(b)-(1-p)\tau_K(a)\big)\big((1-p)a-pb\big)\,dp.
\end{equation} With this defined, the action of the generators takes the following form. \begin{lemma}\label{lem:gen-smooth-evaluation}
For any twice-differentiable test function $\varphi\in C^2(\TTd)$, there exist constants $\epsilon_{N,\varphi,H,K}$, vanishing as $N\to\infty$, such that, $\QQ^{N,K}$-almost surely, for all $0\le t\le T$,
\begin{equation}\label{eq:tilted-generator-smooth}
\left|\cL^{N,H,K}_t\langle \varphi, \pi_N(\xi^N_t)\rangle-\frac12\langle \Delta\varphi,\pi_N(\xi^N_t)\rangle-\mathfrak D_{t,\varphi}^{N,K}(\xi^N_t)\right|\le \epsilon_{N,\varphi,H,K}.
\end{equation} 
Moreover, the $\QQ^{N,K}$-martingale
\[
M^{N,\varphi}_t:=\langle \varphi, \pi_N(\xi^N_t)\rangle-\langle \varphi, \pi_N(\xi^N_0)\rangle-\int_0^t\cL^{N,H,K}_s\langle \varphi, \pi_N(\xi^N_s)\rangle ds
\]
has predictable quadratic variation at most
\begin{equation}
\label{eq:quad-var}
[M^{N,\varphi}]_t\le \frac{C_{\varphi,H,K}}{N^d}\int_0^t\|\xi^N_s\|_{L^2_N}^2 ds.
\end{equation}
\end{lemma}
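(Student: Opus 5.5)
We compute the generator directly from \eqref{eq:tilted-generator-2}, expand the tilt factor $r_t=e^{\vartheta^N_t}$ to first order, and bound the quadratic variation from the jump sizes. Nothing beyond Taylor expansion and the bound $|\vartheta^N_t|\lesssim N^{-1}$ should be needed.

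\textbf{Step 1: single-edge increment.} For the observable $F(\xi)=\langle\varphi,\pi_N(\xi)\rangle$, a redistribution across the edge $x\sim y$ with parameter $p$ changes $F$ by
\[
F(\xi^{x,y,p})-F(\xi)=N^{-d}\big(\varphi(x)-\varphi(y)\big)\big((p-1)\xi(x)+p\,\xi(y)\big).
\]
Note that $(p-1)\xi(x)+p\xi(y)=-\big((1-p)\xi(x)-p\xi(y)\big)$.

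\textbf{Step 2: expand the tilt.} Write $r_t=1+\vartheta^N_t+R$ with $|R|\le \tfrac12|\vartheta^N_t|^2e^{|\vartheta^N_t|}$. Since $\tau_K\le K$, $\chi_K$ is bounded and $H$ is smooth, we have
\[
|\vartheta^N_t|\le \|\chi_K\|_\infty\|\nabla H\|_\infty N^{-1}\cdot 2K = C_{H,K}N^{-1}.
\]

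\textbf{Step 3: the three resulting terms.}
\begin{itemize}
\item The zeroth-order term is the untilted KMP generator. Since $\int_0^1 (p\xi(y)-(1-p)\xi(x))\,dp=\tfrac12(\xi(y)-\xi(x))$, it gives $\tfrac{N^{2-d}}{2}\sum_{x\sim y}(\varphi(x)-\varphi(y))(\xi(y)-\xi(x))$. Summing by parts, this equals $\tfrac12\langle\Delta_N\varphi,\pi_N\rangle$. Replacing $\Delta_N\varphi$ by $\Delta\varphi$ costs $o(1)\,\langle 1,\pi_N\rangle$. The total energy is conserved and, $\QQ^{N,K}$-a.s., bounded by $a$ (Lemma \ref{lem:girsanov}), so this error is deterministic and vanishes. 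For $\varphi\in C^2$ only, we use the modulus of continuity of $D^2\varphi$.
\item The first-order term is exactly
\[
N^{2-d}\sum_{x\sim y}\chi_K(t,m(x,y))(H(t,x)-H(t,y))(\varphi(y)-\varphi(x))\Psi_K(\xi(x),\xi(y)),
\]
using the sign identity from Step 1. This coincides with $\mathfrak D^{N,K}_{t,\varphi}$ once the factor $\chi_K$ is included (it is evidently intended there, or absorbed into the notation).
\item The remainder is bounded by
\[
N^{2-d}\sum_{x\sim y}C N^{-2}\|\nabla\varphi\|_\infty N^{-1}\big(\xi(x)+\xi(y)\big)\le C N^{-1}\langle 1,\pi_N\rangle\le C a N^{-1}.
\]
Hence it is also deterministic and vanishing.
\end{itemize}
This proves \eqref{eq:tilted-generator-smooth}.

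\textbf{Step 4: quadratic variation.} For the pure-jump martingale $M^{N,\varphi}$, the predictable quadratic variation is
\[
\int_0^t N^2\sum_{x\sim y}\int_0^1 r_s\,\big(F(\xi^{x,y,p})-F(\xi)\big)^2\,dp\,ds.
\]
Each jump is at most $N^{-d}\|\nabla\varphi\|_\infty N^{-1}(\xi(x)+\xi(y))$, and $r_s\le e^{C}$. The integrand is therefore at most
\[
C N^{2-2d-2}\sum_{x\sim y}(\xi(x)+\xi(y))^2\le C N^{-2d}\sum_x\xi(x)^2=CN^{-d}\|\xi\|_{L^2_N}^2,
\]
which gives \eqref{eq:quad-var}.

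The main obstacle is only bookkeeping: getting the sign and orientation conventions in Step 1 to match $\mathfrak D$, and making sure every error is uniform in $\xi$. The latter is why the a.s. mass bound $a$ is essential. The Taylor remainder would otherwise involve $\xi$ unboundedly; this is harmless here because $\vartheta^N_t$ is bounded, thanks to the cutoff $\tau_K$.
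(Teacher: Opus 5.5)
Your proposal is correct and follows the paper's proof essentially step by step. The shared steps are the single-edge increment of $\langle\varphi,\pi_N\rangle$, the second-order Taylor expansion of $e^{\vartheta^N_t}$ using $|\vartheta^N_t|\le C_{H,K}N^{-1}$, summation by parts together with the a.s.\ mass bound $a$ for the diffusive and remainder terms, and the jump-size bound with $r_t$ uniformly bounded for the quadratic variation. Your remark that the factor $\chi_K(t,m(x,y))$ belongs in $\mathfrak D^{N,K}_{t,\varphi}$ is also right: the paper's proof, and its later use in Lemma~\ref{lem:replacement-drift}, carry this factor even though the displayed definition omits it.
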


\begin{proof}
Let
\[
F_\varphi(\xi):=\langle \varphi,\pi_N(\xi)\rangle=\frac1{N^d}\sum_{z\in\TND}\varphi(z)\xi(z).
\]
Fix an edge $\{x,y\}$ and $p\in[0,1]$. Since only the values at $x$ and $y$ are modified by the jump $\xi\mapsto \xi^{x,y,p}$, one has
\begin{align*}
F_\varphi(\xi^{x,y,p})-F_\varphi(\xi)
&=\frac{\varphi(y)-\varphi(x)}{N^d}\big((1-p)\xi(x)-p\xi(y)\big)
\end{align*}
and so
\begin{align}
L_{x,y,t}^{H,K}F_\varphi(\xi)
&=\frac{\varphi(y)-\varphi(x)}{N^d}
\int_0^1 e^{\vartheta_t^N(x,y,p,\xi)}\big((1-p)\xi(x)-p\xi(y)\big)\,dp.
\label{eq:edge-generator-linear-cutoff}
\end{align}
Because $H$ and $\chi_K$ are smooth and $\tau_K\le K$, there exists $C_{H,K}<\infty$ such that
\begin{equation}\label{eq:theta-small}
|\vartheta_t^N(x,y,p,\xi)|\le \frac{C_{H,K}}{N}
\end{equation}
uniformly in $t,\{x,y\},p\in[0,1]$ and $\xi\in \Lambda_{N,a}$. We may therefore Taylor expand the exponential to second order and write\[
e^{\vartheta_t^N(x,y,p,\xi)}=1+\vartheta_t^N(x,y,p,\xi)+R_t^N(x,y,p,\xi),
\qquad |R_t^N(x,y,p,\xi)|\le \frac{C_{H,K}}{N^2}.
\]
Substituting this into \eqref{eq:edge-generator-linear-cutoff}, we obtain
\begin{align*}
L_{x,y,t}^{H,K}F_\varphi(\xi)
&=\frac{\varphi(y)-\varphi(x)}{N^d}\int_0^1\big((1-p)\xi(x)-p\xi(y)\big)\,dp \\
&\quad +\frac{\varphi(y)-\varphi(x)}{N^d}\int_0^1 \vartheta_t^N(x,y,p,\xi)\big((1-p)\xi(x)-p\xi(y)\big)\,dp \\
&\quad +\mathcal R_{x,y,t}^N(\xi),
\end{align*}
where
\[
|\mathcal R_{x,y,t}^N(\xi)|
\le \frac{C_{\varphi,H,K}}{N^{d+3}}\big(\xi(x)+\xi(y)\big),
\]
because $|\varphi(y)-\varphi(x)|\le C_\varphi N^{-1}$. The first integral is equal to $\frac12(\xi(x)-\xi(y))$, whereas, recalling \eqref{eq:def-psiK-early}, the second equals
\[
\chi_K\!\left(t,\frac{x+y}{2}\right)\big(H(t,x)-H(t,y)\big)\Psi_K\big(\xi(x),\xi(y)\big).
\]

Summing over all unordered nearest-neighbour edges and multiplying by $N^2$ therefore yields
\begin{align}
\cL_t^{N,H,K}F_\varphi(\xi)
&=\frac{N^2}{2N^d}\sum_{x\sim y}\big(\varphi(y)-\varphi(x)\big)\big(\xi(x)-\xi(y)\big)
+\mathfrak D_{t,\varphi}^{N,K}(\xi)+\mathcal E_{N,\varphi,H,K}(\xi),
\label{eq:generator-before-discrete-lap}
\end{align}
with
\[
|\mathcal E_{N,\varphi,H,K}(\xi)|
\le \frac{C_{\varphi,H,K}}{N}\,\langle 1,\pi_N(\xi)\rangle.
\]
By discrete summation by parts,
\[
\frac{N^2}{2N^d}\sum_{x\sim y}\big(\varphi(y)-\varphi(x)\big)\big(\xi(x)-\xi(y)\big)
=\frac12\langle \Delta_N\varphi,\pi_N(\xi)\rangle,
\]
where $\Delta_N$ is the discrete Laplacian. Since $\varphi\in C^2(\TTd)$,
\[
\sup_{x\in\TND}|\Delta_N\varphi(x)-\Delta\varphi(x)|=c_{N,\varphi}\to 0.
\]
Consequently, using the mass bound, $\QQ^{N,K}$-almost surely, for all time,
\[
\left|\frac12\langle \Delta_N\varphi-\Delta\varphi,\pi_N(\xi_t)\rangle\right|
\le a c_{N,\varphi}
\]
and the claim \eqref{eq:tilted-generator-smooth} follows. To compute the quadratic variation of the martingale $M^{N,\varphi}_t$, note that the jump size of $F_\varphi$ along the edge $\{x,y\}$ is bounded by
\[
|F_\varphi(\xi^{x,y,p})-F_\varphi(\xi)|
\le \frac{C_\varphi}{N^{d+1}}\big(\xi(x)+\xi(y)\big).
\]
Under $\QQ^{N,K}$, the compensator of the jump measure is multiplied by the factor $r_t(x,y,p,\xi)$, which is uniformly bounded by $e^{C_{H,K}/N}\le 2$ for all sufficiently large $N$. Hence
\begin{align*}
[M^{N,\varphi}]_t
&\le C_{\varphi,H,K}\int_0^t\frac{N^2}{N^{2d+2}}\sum_{x\sim y}\big(\xi_s^N(x)+\xi_s^N(y)\big)^2\,ds \\
&\le \frac{C_{\varphi,H,K}}{N^d}\int_0^t\|\xi_s^N\|_{L^2_N}^2\,ds,
\end{align*}
which is exactly \eqref{eq:quad-var}.
\end{proof}

We next prove a tightness property.
\begin{lemma}\label{lem:tightness2}
Under $\QQ^{N,K}$, the processes $\pi_N(\xi^N)$ are tight in the topology of $D([0,T],\cM_+(\TTd))$, and all subsequential limits are almost surely valued in $C([0,T],\cM_+(\TTd))$.
\end{lemma}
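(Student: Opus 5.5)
We will verify the standard criterion for tightness of measure-valued processes: since the mass is $\QQ^{N,K}$-almost surely bounded by $a$, all trajectories lie in $\cM_a(\TTd)$, which is compact in the weak topology. By Jakubowski's criterion (or Aldous' criterion composed with a countable dense family), it is therefore enough to show that, for each $\varphi$ in a countable family $\{\varphi_j\}\subset C^2(\TTd)$ dense in $C(\TTd)$, the real processes $\langle\varphi,\pi_N(\xi^N_t)\rangle$ are tight in $D([0,T],\RR)$ with continuous limit points. For continuity of limits we shall also bound the maximal jump. For a fixed $\varphi$ we use the semimartingale decomposition of Lemma \ref{lem:gen-smooth-evaluation}:
\[
\langle\varphi,\pi_N(\xi^N_t)\rangle=\langle\varphi,\pi_N(\xi^N_0)\rangle+\int_0^t\cL^{N,H,K}_s\langle\varphi,\pi_N(\xi^N_s)\rangle\,ds+M^{N,\varphi}_t .
\]

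The drift term will be handled by controlling $\mathfrak D^{N,K}_{t,\varphi}$. Since $|\varphi(y)-\varphi(x)|\,|H(t,x)-H(t,y)|\le C N^{-2}$ and $|\Psi_K(a,b)|\le K(a+b)$, we get $|\mathfrak D^{N,K}_{t,\varphi}(\xi)|\le C_{\varphi,H,K}\langle 1,\pi_N(\xi)\rangle\le C_{\varphi,H,K}\,a$. Together with \eqref{eq:tilted-generator-smooth} and $|\langle\Delta\varphi,\pi_N\rangle|\le a\|\Delta\varphi\|_\infty$, the integrand is uniformly bounded $\QQ^{N,K}$-a.s. The drift is therefore uniformly Lipschitz in time, hence tight in $C([0,T])$. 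The initial values are bounded by $a\|\varphi\|_\infty$.

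For the martingale, Doob's inequality and \eqref{eq:quad-var} give
\[
\EE_{\QQ^{N,K}}\Big[\sup_{t\le T}|M^{N,\varphi}_t|^2\Big]\le \frac{4C_{\varphi,H,K}}{N^d}\EE_{\QQ^{N,K}}\int_0^T\|\xi^N_s\|^2_{L^2_N}ds\to0,
\]
using Proposition \ref{prop:l2-estimate}. So $M^{N,\varphi}\to0$ uniformly in probability. The main point, and the only place nontrivial input enters, is precisely this use of the $L^2$ energy estimate. The bounded mass alone gives only $\|\xi\|^2_{L^2_N}\le N^d a^2$, which is useless. Combining the three parts, $\langle\varphi,\pi_N(\xi^N)\rangle$ is a sum of a tight sequence in $C$ and a term vanishing uniformly. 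It is therefore tight in $D$ with continuous limits (C-tightness).

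Finally, we transfer this to $\pi_N(\xi^N)$ itself. Tightness in $D([0,T],\cM_+(\TTd))$ follows from Jakubowski's criterion: compact containment in $\cM_a$ plus tightness of $\langle\varphi_j,\cdot\rangle$ for a separating countable family. For continuity of the limit, note that any limit $\xi$ has $t\mapsto\langle\varphi_j,\xi_t\rangle$ continuous for all $j$ almost surely. Since masses are bounded by $a$ and $\{\varphi_j\}$ is dense, $t\mapsto\xi_t$ is then weakly continuous, equivalently continuous in $W$ on $\cM_a$. Alternatively, we can show directly that the maximal jump of $\langle\varphi,\pi_N\rangle$ vanishes: it is bounded by $C_\varphi N^{-d-1}\max_{x,t}(\xi(x)+\xi(y))\le C_\varphi a/N$.
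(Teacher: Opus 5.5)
Your proof is correct and follows the paper's argument essentially step for step: compact containment from the conserved mass bound $a$, the decomposition of Lemma~\ref{lem:gen-smooth-evaluation} with the drift bounded by $C_{\varphi,H,K}\,a$, and Doob's inequality with \eqref{eq:quad-var} and Proposition~\ref{prop:l2-estimate} to make $M^{N,\varphi}$ vanish uniformly in probability. The differences are cosmetic: you invoke Jakubowski's criterion where the paper cites Mitoma's (take the family of all maps $\mu\mapsto\langle\varphi,\mu\rangle$, $\varphi\in C^2(\TTd)$, so that it is also closed under addition), and your deterministic jump bound $C_\varphi a/N$ gives a slightly more explicit argument for continuity of limit points than the paper's one-line remark.
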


\begin{proof}
Let $a>\langle 1,u_0\rangle$ be the constant fixed in the definition of $Y_0^N$. Since every jump preserves the total energy on the corresponding edge, the total mass is conserved along the dynamics. Hence, under $\QQ^{N,K}$,
\[
\langle 1,\pi_N(\xi_t^N)\rangle=\langle 1,\pi_N(\xi_0^N)\rangle\le a,
\qquad 0\le t\le T,
\]
almost surely, which proves compact containment. To prove tightness in time, fix $\varphi\in C^2(\TTd)$ and set
\[
X_t^{N,\varphi}:=\langle \varphi,\pi_N(\xi_t^N)\rangle.
\]
By Lemma~\ref{lem:gen-smooth-evaluation},
\[
X_t^{N,\varphi}=X_0^{N,\varphi}+B_t^{N,\varphi}+M_t^{N,\varphi},
\]
where
\[
B_t^{N,\varphi}:=\int_0^t\Big(\frac12\langle \Delta\varphi,\pi_N(\xi_s^N)\rangle+\mathfrak D_{s,\varphi}^{N,K}(\xi_s^N)+\varepsilon_{N,\varphi,H,K}(s)\Big)\,ds
\]
for some deterministic error satisfying $\sup_{0\le s\le T}|\varepsilon_{N,\varphi,H,K}(s)|\to 0$.

We first estimate the drift. Since $\varphi$, $H$ and $\chi_K$ are smooth and $\tau_K\le K$, there exists $C_{\varphi,H,K}<\infty$ such that
\[
|\mathfrak D_{t,\varphi}^{N,K}(\xi)|
\le \frac{C_{\varphi,H,K}}{N^d}\sum_{x\sim y}({\xi(x)+\xi(y)})
\le C_{\varphi,H,K}\langle 1,\pi_N(\xi)\rangle
\le C_{\varphi,H,K}a.
\]
The same bound clearly holds for $\langle \Delta\varphi,\pi_N(\xi_t^N)\rangle$, and for the error term $\epsilon_{N,\varphi,H,K}$, and together $B^{N,\varphi}_t$ is almost surely Lipschitz continuous in time. For the martingale part, Doob's inequality and \eqref{eq:quad-var} give
\[
\E_{\QQ^{N,K}}\Big[\sup_{0\le t\le T}|M_t^{N,\varphi}|^2\Big]
\le 4\E_{\QQ^{N,K}}\big[[M^{N,\varphi}]_T\big]
\le \frac{C_{\varphi,H,K}}{N^d}
\E_{\QQ^{N,K}}\Big[\int_0^T \|\xi_s^N\|_{L^2_N}^2\,ds\Big].
\]
By Proposition~\ref{prop:l2-estimate}, the right-hand side tends to $0$. Hence $M^{N,\varphi}$ converges to $0$ in $L^2(\QQ^{N,K};D([0,T]))$.
\\\\ It follows that, for each fixed $\varphi\in C^2(\TTd)$, the sequence $X^{N,\varphi}$ is tight in $D([0,T],\R)$, and the tightness follows by Mitoma's criterion. The continuity of subsequential limits follows by the vanishing of $M^{N,\varphi}$, as all other terms are already continuous in time. \end{proof}

\begin{lemma}[Identification of the drift term]\label{lem:replacement-drift}
Let $(N_j)_{j\ge 1}$ be a subsequence and suppose that, on some probability space where $\xi^N$ has the same law as under $\QQ^{N,K}$, the corresponding empirical measures satisfy
\[
\pi_{N_j}(\xi^{N_j})\longrightarrow \Xi
\qquad\text{almost surely in }D([0,T],\cM_+(\TTd)).
\]
Then, for every $\varphi\in C^{1,2}([0,T)\times \TTd)$,\begin{equation}\label{eq:replacement-drift-measure}
\int_0^T \mathfrak D_{s,\varphi(s,\cdot)}^{N_j,K}(\xi_s^{N_j})\,ds
\longrightarrow
\int_0^T \big\langle \chi_K(s,\cdot)\nabla H(s,\cdot)\cdot\nabla\varphi(s,\cdot),\Theta_K(\Xi_s)\big\rangle\,ds
\end{equation}
in probability.
\end{lemma}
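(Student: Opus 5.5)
The proof proceeds in three stages: a Taylor/summation-by-parts step that rewrites the drift as a local function integrated against a smooth weight, the replacement estimate of Proposition~\ref{prop:replacement}, and the convergence of the resulting mesoscopic functional to the measure-valued nonlinearity $\Theta_K(\Xi_s)$.

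\textbf{Step 1: reduction to a local function.} Since $\varphi(s,\cdot)$ and $H(s,\cdot)$ are smooth, for an edge $x\sim y$ in direction $e_i$ we have $(\varphi(y)-\varphi(x))(H(x)-H(y))=-N^{-2}\partial_i\varphi\,\partial_iH(s,x)+O(N^{-3})$. Moreover $|\Psi_K(a,b)|\le C_K(a+b)$, so the error summed against the conserved mass bound $\langle 1,\pi_N\rangle\le a$ contributes $O(N^{-1})$ uniformly. Up to a vanishing error, we may therefore write
\[
\mathfrak D^{N,K}_{s,\varphi}(\xi)=\frac1{N^d}\sum_{x}\sum_{i=1}^d\psi_i(s,x)\,\tau_xF_i(\xi),\qquad F_i(\xi):=-\Psi_K(\xi(0),\xi(e_i)),\quad \psi_i:=\chi_K\,\partial_iH\,\partial_i\varphi .
\]
Here we have also replaced $\chi_K$ at the midpoint by its value at $x$. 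Each $F_i$ is local, locally Lipschitz, and of linear growth, because one factor in $\Psi_K$ is truncated at $K$. A direct computation with independent exponentials of mean $\rho$, using $\int_0^1p(1-p)\,dp=1/6$ and $\int_0^1 p^2\,dp=1/3$, gives $\bar F_i(\rho)=\frac{2}{3}m_{1,1}^K(\rho)-\frac13\rho m_1^K(\rho)$, and also accounts for the sign. This is exactly how $\Theta_K$ in \eqref{eq:def-Theta-Gamma-K} was designed, so $\bar F_i=\Theta_K$.

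\textbf{Step 2: replacement.} Apply Proposition~\ref{prop:replacement} with $F_i$ and test function $\psi_i$ (continuous; the time-dependence is harmless). This shows that $\int_0^T\mathfrak D\,ds$ is close in $\QQ^{N,K}$-probability to
\[
\int_0^T\frac1{N^d}\sum_x\psi(s,x)\,\Theta_K\big(\bar\xi^{N\eps}_s(x)\big)\,ds
\]
uniformly as $N\to\infty$, and then as $\eps\to0$.

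\textbf{Step 3: passage to the limit.} The remaining task is to show that the last expression converges, as $N_j\to\infty$ and then $\eps\to0$, to $\int\langle\psi,\Theta_K(\Xi_s)\rangle\,ds$. We expect this to be the main obstacle, since $\Theta_K$ is nonlinear and $\Xi_s$ may have a singular part. For fixed $\eps$, $\bar\xi^{N\eps}_s(x)$ equals $\langle\iota_\eps(\cdot-x),\pi_N(\xi_s)\rangle$ up to $O(1/(N\eps))$, where $\iota_\eps$ is the normalised box kernel. We replace the box by a continuous approximation, which costs a small amount through the Lipschitz continuity of $\Theta_K$ and the TV bound. Almost-sure convergence in $D$, with continuous limits by Lemma~\ref{lem:tightness2}, then gives convergence to $\int_0^T\int\psi\,\Theta_K(\iota_\eps*\Xi_s)\,dx\,ds$ by dominated convergence, using the linear growth of $\Theta_K$ and the mass bound.

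Finally we let $\eps\to0$. On the absolutely continuous part, $\iota_\eps*v\to v$ in $L^1$, so Lipschitz continuity handles it. On the singular part the mollified density blows up on a set of shrinking Lebesgue measure, and $\Theta_K(r)/r\to K/3=\Theta_K^\infty$. I would first try to show directly that $\Theta_K(\iota_\eps*\Xi_s)\,dx\to\Theta_K(\Xi_s)$ weakly for every finite measure, i.e.\ a Reshetnyak/recession-function type continuity for the linear-growth convex-like function $\Theta_K$. One would split $\Theta_K(r)=\Theta_K^\infty r-(\Theta_K^\infty r-\Theta_K(r))$, where the second term is bounded and Lipschitz on $[0,\infty)$ and is negligible on the singular concentration set because that set has vanishing Lebesgue measure. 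The remaining argument is a careful separation of the Lebesgue decomposition; dominated convergence in $s$ then concludes the proof.
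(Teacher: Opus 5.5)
Your proof is correct. Steps 1--2 match the paper's argument: the paper applies the replacement to $\Psi_K$ with weight $-\chi_K\partial_iH\partial_i\varphi$ and uses $\bar\Psi_K=-\Theta_K$, which is your computation with the sign moved. Your treatment of the discontinuous box kernel, via a continuous approximation paid for by the Lipschitz bound on $\Theta_K$ and the mass bound, spells out what the paper asserts in one line.

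The genuine difference is the limit $\eps\to0$. The paper avoids singular parts altogether. From Proposition~\ref{prop:l2-estimate}, Fatou and lower semicontinuity it deduces $\Xi\in L^2_{t,x}$ almost surely. So for a.e.\ $s$ the measure $\Xi_s$ has an $L^2_x$ density, $\iota_\eps*\Xi_s\to\Xi_s$ in $L^1_x$, and Lipschitz continuity of $\Theta_K$ concludes.

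You instead prove $\Theta_K(\iota_\eps*\mu)\,dx\to\Theta_K(\mu)$ weakly for every finite measure $\mu$, via the splitting $\Theta_K(r)=\frac K3 r+B(r)$ with $B$ bounded and Lipschitz (in fact $B(r)\to K^2/6$). This is valid. The quickest way to make your ``careful separation'' rigorous is the Lebesgue differentiation theorem. Writing $\mu=v\,dx+\mu^\perp$, one has $\iota_\eps*\mu\to v$ Lebesgue-a.e., so $B(\iota_\eps*\mu)\to B(v)$ in $L^1_x$ by bounded convergence. Meanwhile $\frac K3\,(\iota_\eps*\mu)\,dx\to\frac K3\mu$ weakly.

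What each route buys: yours gives the statement for genuinely measure-valued limits, which is exactly the setting the convention \eqref{eq:def-Theta-on-measures} and the uniqueness Lemma~\ref{lem:unique-tfp} are designed for. The paper's route is shorter, because the $L^2$ estimate is already needed for the martingale term and inside the replacement lemma. It also shows that the recession value $\Theta_K^\infty$ is never activated for the limits arising under $\QQ^{N,K}$.
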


\begin{proof}
 For $i\in\{1,\dots,d\}$, write $e_i^N:=N^{-1}e_i$. Recalling the definition \eqref{eq:def-psiK-early}, we rewrite the edge sum in \eqref{eq:def-local-drift-functional} by coordinate directions to find
\[
\mathfrak D_{s,\varphi(s,\cdot)}^{N,K}(\xi)
=\frac1{N^d}\sum_{i=1}^d\sum_{x\in\TND}
\alpha_{i}^{N}(s,x)\,\Psi_K\big(\xi(x),\xi(x+e_i^N)\big),
\]
where
\[
\alpha_i^N(s,x):=
N^2\chi_K\!\left(s,x+\frac{e_i^N}{2}\right)
\big(H(s,x)-H(s,x+e_i^N)\big)
\big(\varphi(s,x+e_i^N)-\varphi(s,x)\big).
\]
By smoothness of $\chi_K$, $H$ and $\varphi$,
\begin{equation}\label{eq:alpha-conv}
\sup_{(s,x)\in [0,T]\times\TND}
\left|\alpha_i^N(s,x)+\chi_K(s,x)\partial_iH(s,x)\partial_i\varphi(s,\cdot)(x)\right|\longrightarrow 0.
\end{equation}

Let
\[
\bar\Psi_K(\rho):=\E_{\nu_\rho^N\otimes \nu_\rho^N}\big[\Psi_K(X,Y)\big].
\]
A direct computation using the definitions of $m_1^K$ and $m_{1,1}^K$ yields
\[
\bar\Psi_K(\rho)=\frac13\rho m_1^K(\rho)-\frac23 m_{1,1}^K(\rho)=-\Theta_K(\rho).
\]
Fix $\varepsilon>0$. Applying Proposition~\ref{prop:replacement} to the local Lipschitz function
\[
F_i(\xi):=\Psi_K\big(\xi(0),\xi(e_i^N)\big),
\]
with the test function $-\chi_K(s,x)\partial_iH(s,x)\partial_i\varphi(s,\cdot)(x)$ and using \eqref{eq:alpha-conv}, we obtain
\begin{align}
&\int_0^T \mathfrak D_{s,\varphi(s,\cdot)}^{N,K}(\xi_s^N)\,ds 
\\ &=-\sum_{i=1}^d \frac1{N^d}\sum_{x\in\TND}\int_0^T
\chi_K(s,x)\partial_iH(s,x)\partial_i\varphi(s,\cdot)(x)\,\bar\Psi_K\big(\bar\xi^{N\varepsilon}_s(x)\big)\,ds+\mathcal{T}_{N,\eps}\nonumber
\\
&=\sum_{i=1}^d \frac1{N^d}\sum_{x\in\TND}\int_0^T
\chi_K(s,x)\partial_iH(s,x)\partial_i\varphi(s,\cdot)(x)\,\Theta_K\big(\bar\xi^{N\varepsilon}_s(x)\big)\,ds+\mathcal{T}_{N,\eps} \nonumber
\label{eq:drift-after-replacement}
\end{align} for error terms $\mathcal{T}_{N,\eps}$ which vanish in probability in the double-limit $N\to \infty, \eps\to 0$ in the sense of Proposition \ref{prop:replacement}. In the sequel we use the same notation for an error, possibly varying from line to line, with this property.\\ \\ 
For fixed $\varepsilon$, the almost sure convergence of $\pi_{N_j}(\xi^{N_j})$ to $\Xi$ implies the convergence of the local averages to the mollified density for almost all times
\[
\bar\xi_s^{N_j\varepsilon}(x)\longrightarrow (\iota_\varepsilon*\Xi_s)(x)
\]
 where $\iota_\varepsilon$ denotes the normalised indicator of the cube of radius $\varepsilon$, and we obtain\begin{equation*}\begin{split}
\int_0^T \mathfrak D_{s,\varphi(s,\cdot)}^{N_j,K}(\xi_s^{N_j})\,ds &= 
\int_0^T\!\int_{\TTd}
\chi_K(s,x)\nabla H(s,x)\cdot\nabla\varphi(s,x)
\,\Theta_K\big((\iota_\varepsilon*\Xi_s)(x)\big)\,dx\,ds \\&\hspace{3cm} + \mathcal{T}_{N,\eps}.
\end{split}\end{equation*}
Finally, thanks to Proposition \ref{prop:l2-estimate} and lower semicontinuity, $\Xi$ is almost surely valued in $L^2_{t,x}\subset \cX$. It follows that $\iota_\varepsilon*\Xi_s$ converge to $\Xi_s$ in $L^2_{x}$ and hence in $L^1_{x}$ for almost all $t$, and for such $t$ the same holds for $\Theta_K(\iota_\varepsilon*\Xi_s)\to\Theta_K(\Xi_s)$ by \eqref{eq:Theta-measure-growth}.  As $\varepsilon\downarrow 0$, the integral in the previous display thus converges almost surely, and hence in probability, to the integral appearing in \eqref{eq:replacement-drift-measure}, and the proof is complete.
\end{proof}

We are now ready to assemble the various lemmata and give the

\begin{proof}[Proof of Proposition \ref{prop:convergence-under-Q}]
By Lemma~\ref{lem:tightness2}, the sequence $\pi_N(\xi^N)$ is tight in $D([0,T],\cM_+(\TTd))$. Let $(N_j)_{j\ge 1}$ be a subsequence along which there is convergence in distribution. By Skorokhod's representation theorem, we may realise this subsequence and a limit process $\Xi$ on a common probability space in such a way that
\[
\pi_{N_j}(\xi^{N_j})\longrightarrow \Xi
\qquad\text{almost surely in }D([0,T],\cM_+(\TTd)).
\]
By Lemma~\ref{lem:svle}, the initial data converge in probability to $u_0\,dx$, hence
\begin{equation}\label{eq:init-convergence-propQ}
\Xi_0=u_0\,dx
\qquad\text{almost surely.}
\end{equation}

By the second part of Lemma \ref{lem:tightness2}, $\Xi\in C([0,T],\cM_+(\TTd))$ almost surely. Fix $\varphi\in C^{1,2}([0,T)\times \TTd)$. By Lemma~\ref{lem:gen-smooth-evaluation},
\begin{align}
0
&=\langle \varphi_0,\pi_{N_j}(\xi_0^{N_j})\rangle
+\int_0^T\left\langle \partial_s\varphi(s,\cdot)+\frac12\Delta\varphi(s,\cdot),\pi_{N_j}(\xi_s^{N_j})\right\rangle\,ds
 \nonumber\\
&\hspace{3cm}+\int_0^T \mathfrak D_{s,\varphi(s,\cdot)}^{N_j,K}(\xi_s^{N_j})\,ds+M_T^{N_j,\varphi}+\int_0^T \varepsilon_{N_j,\varphi,H,K}(s)\,ds.
\label{eq:martingale-identity-cutoff}
\end{align}
The error $\int \varepsilon$ tends to $0$ uniformly in $t$. For the martingale term, Doob's inequality, \eqref{eq:quad-var} and Proposition~\ref{prop:l2-estimate} imply
\[
\E\Big[\sup_{0\le s\le T}|M_s^{N_j,\varphi}|^2\Big]
\le \frac{C_{\varphi,H,K}}{N_j^d}
\E\Big[\int_0^T \|\xi_s^{N_j}\|_{L^2_N}^2\,ds\Big]\longrightarrow 0.
\]
Therefore
\begin{equation}\label{eq:martingale-vanishes-cutoff}
\sup_{0\le s\le T}|M_s^{N_j,\varphi}|\longrightarrow 0
\qquad\text{in probability.}
\end{equation}
By the almost sure convergence of $\pi_{N_j}(\xi^{N_j})$ to $\Xi$ and  Lemma~\ref{lem:replacement-drift}, the two integrals on the first line of \eqref{eq:martingale-identity-cutoff} converge in probability to
\[
\int_0^T \left\langle \partial_s\varphi(s,\cdot)+\frac12\Delta\varphi(s,\cdot),\Xi_s\right \rangle\,ds +\int_0^T \big\langle \chi_K(s,\cdot)\nabla H(s,\cdot)\cdot\nabla\varphi(s,\cdot),\Theta_K(\Xi_s)\big\rangle\,ds.
\]
Passing to the limit in \eqref{eq:martingale-identity-cutoff} and using \eqref{eq:init-convergence-propQ}, we conclude that, almost surely,\begin{equation*}\begin{split}
0
&=\langle \varphi,u_0\rangle
+\int_0^T\left\langle (\partial_s+\frac12\Delta)\varphi(s,\cdot),\Xi_s\right\rangle\,ds
\\&\hspace{3cm}+\int_0^t\big\langle \chi_K(s,\cdot)\nabla H(s,\cdot)\cdot\nabla\varphi(s,\cdot),\Theta_K(\Xi_s)\big\rangle\,ds.
\end{split}\end{equation*}
By a diagonal argument, a further subsequence may be chosen such that the above convergence is almost sure, rather than in probability, for every $\varphi$ belonging to a countable, dense subset $\{\varphi_\ell: \ell\in \mathbb{N}\}\subset C^{1,2}([0,T)\times\TTd)$, and it follows that the limit process $\Xi$ is almost surely a weak solution to \eqref{eq:tfp}. By Lemma~\ref{lem:unique-tfp}, the only such solution is $u\,dx$. Hence $\Xi_t=u(t,\cdot)\,dx$ for every $t\in[0,T]$ almost surely, and since convergence in $D([0,T],\cM_+(\TTd))$ implies convergence in $\cX$, Proposition \ref{prop:convergence-under-Q} is complete\end{proof}

\subsection{Convergence of the Entropy}
The goal of this subsection is to prove the following, which provides the second input to the entropy method and thus completes the proof of the restricted lower bound. \begin{lemma}\label{lem:entropy-est}[Entropy estimate for $\QQ^{N,K}$]
The probability measures $\QQ^{N,K}$ produced by \eqref{eq:RND-tilt-3} satisfy
\begin{equation}
	\begin{split}
\limsup_{N\to\infty} \frac1{N^d}{\rm Ent}(\QQ^{N,K}|\PP)&\le S_\rho(u_0)+\frac12\int_0^T\!\int_{\TTd}R_K(u(t,x))u(t,x)^2|\nabla H(t,x)|^2\,\dd x\,\dd t
\\& \le \mathcal{I}_\rho(u)+\delta.
\end{split}\end{equation}
\end{lemma}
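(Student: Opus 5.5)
The plan is to split the relative entropy as in the proof of Lemma~\ref{lem:ent-crude},
\[
\Ent(\QQ^{N,K}|\PP)=\EE_{\QQ^{N,K}}[\log Y^N_0]+\EE_{\QQ^{N,K}}[\log Z^N_T],
\]
and to identify the limit of each term after dividing by $N^d$. The static term should produce $S_\rho(u_0)$. The dynamic term should produce $\frac12\int R_K(u)u^2|\nabla H|^2$. The final inequality is then immediate from the choice of $K$.

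\textbf{Static term.} By \eqref{eq:RND-tilt-1},
\[
\log Y^N_0=-\sum_x\Big[\frac{\xi^N_0(x)}{u_0(x)}-\frac{\xi^N_0(x)}{\rho}+\log\frac{u_0(x)}{\rho}\Big]-\log\nu^N_{u_0}\big(\langle 1,\pi_N\rangle\le a\big)+\log 1(\cdot).
\]
On the support of $\QQ^{N,K}$ the indicator equals one. Since $a>\langle 1,u_0\rangle$, the law of large numbers gives $\nu^N_{u_0}(\langle 1,\pi_N\rangle\le a)\to 1$, so the normalisation contributes $o(N^d)$.

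By Lemma~\ref{lem:girsanov}, $\xi^N_0\sim\nu^N_{u_0,a}$. The linear functional $\frac1{N^d}\sum_x(\frac1{u_0}-\frac1\rho)(x)\xi^N_0(x)$ is bounded by $a\|\frac1{u_0}-\frac1\rho\|_\infty$ on the support. By Lemma~\ref{lem:svle} it converges in probability to $\int(1-u_0/\rho)\,dx$, so bounded convergence gives convergence of its expectation. The logarithmic term is a Riemann sum of a smooth function. Together,
\[
\frac1{N^d}\EE_{\QQ^{N,K}}[\log Y_0^N]\to\int\Big(\frac{u_0}{\rho}-1-\log\frac{u_0}{\rho}\Big)dx=S_\rho(u_0).
\]

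\textbf{Dynamic term.} As in Lemma~\ref{lem:ent-crude}, the dynamic term equals
\[
\EE_{\QQ^{N,K}}\Big[N^2\int\sum_{x\sim y}\int_0^1(r\log r-r+1)\,dp\,ds\Big].
\]
Using \eqref{eq:theta-small}, we have $r\log r-r+1=\frac12\vartheta^2+O(N^{-3})$ uniformly. Summed over $dN^d$ edges with the factor $N^2$, the cubic error is $O(N^{d-1})=o(N^d)$. For the quadratic part, the elementary integrals $\int p^2=\int(1-p)^2=\frac13$ and $\int p(1-p)=\frac16$ give
\[
\int_0^1\big(p\tau_K(b)-(1-p)\tau_K(a)\big)^2dp=\tfrac13\big(\tau_K(a)^2+\tau_K(b)^2-\tau_K(a)\tau_K(b)\big)=:G_K(a,b).
\]
Under $\nu_\rho\otimes\nu_\rho$ this has mean $\frac23m_2^K-\frac13(m_1^K)^2=\Gamma_K(\rho)$. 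Using $N(H(x)-H(y))\approx-\partial_iH$, the dynamic term becomes, up to $o(N^d)$,
\[
\frac{N^d}{2}\,\EE_{\QQ^{N,K}}\Big[\int_0^T\frac1{N^d}\sum_i\sum_x\chi_K^2(\partial_iH)^2\,G_K\big(\xi(x),\xi(x+e_i^N)\big)\,ds\Big].
\]
$G_K$ is local, bounded and Lipschitz, so Proposition~\ref{prop:replacement} replaces it by $\Gamma_K(\bar\xi^{N\eps})$ up to an error vanishing in probability. Since all integrands are bounded by $C_{H,K}K^2$, convergence in probability upgrades to convergence of expectations by bounded convergence.

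Next, $\Gamma_K$ is bounded and continuous. By Proposition~\ref{prop:convergence-under-Q}, $\bar\xi^{N\eps}_s\to\iota_\eps*u_s$ for a.e.\ $s$, and then $\iota_\eps*u\to u$ uniformly as $\eps\to0$. Hence the limit is
\[
\frac12\int\chi_K^2|\nabla H|^2\Gamma_K(u).
\]
Finally, $\chi_K^2\Gamma_K(u)=u^4\Gamma_K(u)/\Theta_K(u)^2=R_K(u)u^2$, which gives the first inequality.

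\textbf{Second inequality.} By Lemma~\ref{lem:optimal-drift-smooth}, $g=u\nabla H$ is optimal, so $\frac12\int u^2|\nabla H|^2=\cJ(u)$. By \eqref{eq:choice-of-K}, $R_K\le1+\delta(1+\cJ(u))^{-1}$ on $[m,M]$, hence
\[
\frac12\int R_K(u)u^2|\nabla H|^2\le\cJ(u)+\delta.
\]
Adding the static term yields $\cI_\rho(u)+\delta$.

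\textbf{Main obstacle.} The delicate step is the passage from the in-probability statement of Proposition~\ref{prop:replacement} to convergence of expectations, together with the identification of the limit of the local averages under $\QQ^{N,K}$. I expect this to go through precisely because the cutoff $\tau_K$ makes every integrand in the dynamic entropy uniformly bounded, so no $L^2$ control beyond Proposition~\ref{prop:l2-estimate} is needed here.
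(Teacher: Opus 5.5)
Your proposal is correct and follows the paper's route: the same static/dynamic split of the entropy, the expansion $r\log r-r+1=\frac12\vartheta^2+O(N^{-3})$, replacement of $\Upsilon_K$ (your $G_K$) by $\Gamma_K$ of the local average, and the identity $\chi_K^2\Gamma_K(u)=R_K(u)u^2$ followed by the choice of $K$. The differences are cosmetic. You inline the argument the paper packages as Lemma \ref{lem:replacement-entropy}, and you make explicit the bounded-convergence step that upgrades convergence in probability to convergence of expectations, which the paper leaves implicit.
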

The next lemma isolates the local functions entering, respectively, the drift and the entropy of the cutoff tilt. We set
\begin{equation}\label{eq:def-xiK}
\Upsilon_K(a,b):=\int_0^1 \big(p(b\wedge K)-(1-p)(a\wedge K)\big)^2\,\dd p.
\end{equation}

\begin{lemma}[Replacement for the entropy local function]\label{lem:replacement-entropy}
As $N\to \infty$, we have the convergence in $\QQ^{N,K}$-probability
\begin{equation}\label{eq:replacement-entropy}\begin{split} &
\frac1{N^d}\int_0^T\!N^2\sum_{x\sim y}\big(H(t,x)-H(t,y)\big)^2\chi_K\!\left(t,\frac{x+y}{2}\right)^2\Upsilon_K\big(\xi_t^N(x),\xi_t^N(y)\big)\,\dd t
\\& \hspace{5cm}\to \int_0^T\!\int_{\TTd}\chi_K(t,x)^2\Gamma_K(u(t,x))|\nabla H(t,x)|^2\,\dd x\,\dd t. \end{split}
\end{equation}
\end{lemma}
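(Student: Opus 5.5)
The plan is to copy the structure of the proof of Lemma~\ref{lem:replacement-drift}, now for the local function $\Upsilon_K$ in place of $\Psi_K$, and then identify the limit using Proposition~\ref{prop:convergence-under-Q}.

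\textbf{Step 1: rewrite the edge sum.} Split the edge sum by coordinate directions. This gives
\[
\frac1{N^d}\sum_{i=1}^d\sum_{x\in\TND}\beta_i^N(t,x)\,\Upsilon_K\big(\xi(x),\xi(x+e_i^N)\big),
\]
with coefficients
\[
\beta_i^N(t,x):=N^2\big(H(t,x)-H(t,x+e_i^N)\big)^2\chi_K\big(t,x+\tfrac{e_i^N}{2}\big)^2 .
\]
By smoothness of $H$ and $\chi_K$, the $\beta_i^N$ converge uniformly to $\chi_K^2(\partial_iH)^2$. Since $\Upsilon_K\le 2K^2$ is bounded, the error from replacing $\beta_i^N$ by this limit is $o(1)$ deterministically. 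Here boundedness helps: no mass or $L^2$ bound is even needed for this step.

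\textbf{Step 2: compute the equilibrium average.} The function $F_i(\xi):=\Upsilon_K(\xi(0),\xi(e_i^N))$ is local, bounded and Lipschitz, so it certainly satisfies the hypotheses of Proposition~\ref{prop:replacement}. Under $\nu_\rho\otimes\nu_\rho$, expand the square and use $\int_0^1p^2\,dp=\int_0^1(1-p)^2\,dp=\tfrac13$ and $\int_0^1p(1-p)\,dp=\tfrac16$. This gives
\[
\bar\Upsilon_K(\rho)=\tfrac23 m_2^K(\rho)-\tfrac13\big(m_1^K(\rho)\big)^2=\Gamma_K(\rho).
\]

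\textbf{Step 3: apply the replacement estimate.} Proposition~\ref{prop:replacement}, with test function $\chi_K^2(\partial_iH)^2$, shows that the left-hand side of \eqref{eq:replacement-entropy} equals
\[
\int_0^T\frac1{N^d}\sum_x\chi_K^2|\nabla H|^2\,\Gamma_K\big(\bar\xi^{N\eps}_t(x)\big)\,\dd t+\mathcal T_{N,\eps},
\]
where the error $\mathcal T_{N,\eps}$ vanishes in probability in the double limit, in the sense of Proposition~\ref{prop:replacement}.

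\textbf{Step 4: pass to the limit at fixed $\eps$.} By Proposition~\ref{prop:convergence-under-Q}, $\pi_N(\xi^N)\to u$ in probability in $\cX$. The proof also gives convergence in $D([0,T],\cM_+(\TTd))$ along subsequences, with a deterministic limit, so it holds along the full sequence. For fixed $\eps$, $\bar\xi^{N\eps}_t(x)$ is the pairing of $\pi_N(\xi^N_t)$ with a normalised box indicator. The boundary of the box carries no mass under the limit $u_t\,dx$, so for every $t$ it converges uniformly in $x$ to $(\iota_\eps*u_t)(x)$. The function $\Gamma_K$ is bounded by $2K^2$ and continuous, so dominated convergence gives convergence of the main term to
\[
\int_0^T\!\int_{\TTd}\chi_K^2|\nabla H|^2\,\Gamma_K(\iota_\eps*u_t)\,\dd x\,\dd t .
\]

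\textbf{Step 5: let $\eps\to0$.} Since $u$ is smooth, $\iota_\eps*u\to u$ uniformly, and this integral tends to the right-hand side of \eqref{eq:replacement-entropy}.

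\textbf{Main obstacle.} The delicate step is the order of limits: the replacement estimate only controls the error $\mathcal T_{N,\eps}$ in the iterated limit $\limsup_\eps\limsup_N$. To handle this, fix $\delta'>0$. First choose $\eps$ so that both $\limsup_N\QQ^{N,K}(|\mathcal T_{N,\eps}|>\delta')<\delta'$ and the $\eps$-approximation error of Step 5 is below $\delta'$. Then take $N\to\infty$ using Step 4. This yields convergence in probability. The second potential difficulty, the uniform convergence of box averages for measures, is harmless here because the limit is absolutely continuous with smooth density.
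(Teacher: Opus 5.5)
Your proposal is correct and follows essentially the same route as the paper. Like the paper, you split the edge sum by coordinate directions with coefficients $\beta_i^N$, identify $\bar\Upsilon_K=\Gamma_K$, apply Proposition~\ref{prop:replacement}, and identify the limit via Proposition~\ref{prop:convergence-under-Q}, as in Lemma~\ref{lem:replacement-drift}. Where the paper only appeals to ``essentially the same arguments'', you supply the details: the order of limits in $\eps$ and $N$, and the need for convergence in $D([0,T],\cM_+(\TTd))$ to the continuous deterministic limit $u$, rather than mere convergence in $\cX$, to handle the nonlinear functional.
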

\begin{proof}
We rewrite the discrete functional in \eqref{eq:replacement-entropy} as
\[
\sum_{i=1}^d\frac1{N^d}\sum_{x\in\TND}\int_0^T \beta_i^N(t,x)\,
\Upsilon_K\big(\xi_t^N(x),\xi_t^N(x+e_i^N)\big)\,dt,
\]
where
\[
\beta_i^N(t,x):=N^2\big(H(t,x)-H(t,x+e_i^N)\big)^2\chi_K\!\left(t,x+\frac{e_i^N}{2}\right)^2.
\]
By smoothness of $H$ and $\chi_K$,
\[
\sup_{(t,x)}\left|\beta_i^N(t,x)-\chi_K(t,x)^2|\partial_i H(t,x)|^2\right|\longrightarrow 0.
\]
Applying Proposition~\ref{prop:replacement} to the local Lipschitz function
\[
F_i(\xi):=\Upsilon_K\big(\xi(0),\xi(e_i^N)\big),
\]
we may replace $F_i$ by its local-equilibrium average. A direct computation using the definition of $\Gamma_K$ gives
\[
\bar F_i(\rho)=\E_{\nu_\rho^N\otimes \nu_\rho^N}[\Upsilon_K(X,Y)]=\Gamma_K(\rho).
\]
The replacement estimate and the convergence under $\QQ^{N,K}$ from Proposition~\ref{prop:convergence-under-Q} therefore imply, by essentially the same arguments as Lemma \ref{lem:replacement-drift}, that the discrete entropy functional converges to
\[
\int_0^T\!\int_{\TTd}\chi_K(t,x)^2\Gamma_K(u(t,x))|\nabla H(t,x)|^2\,dx\,dt,
\]
which is exactly \eqref{eq:replacement-entropy}.
\end{proof}

\begin{proof}[Proof of Lemma \ref{lem:entropy-est}]
We start by decomposing the entropy into the static and dynamic cost:
\[
\frac1{N^d}\log \frac{d\QQ^{N,K}}{d\PP}=\frac1{N^d}\log Y_0^N+\frac1{N^d}\log Z_T^N.
\]
For the first term, $\QQ^N$-almost surely, we have \begin{equation}\begin{split}\label{eq:logyno}
	\frac1{N^d}\log Y^N_0 &=\frac1{N^d}\sum_{x\in \TND}\left[\frac{\xi^N_0(x)}{\rho}-\log\left(\frac{u_0(x)}{\rho}\right)-\frac{\xi^N_0(x)}{u_0(x)}\right]\\ & -\frac1{N^d}\log \nu^N_{u_0}(\langle 1, \pi^N(\xi^N_0)\rangle \le a). \end{split}
\end{equation} In the first term, since $\xi^N_0\sim \nu^N_{u_0,a}$, we may use Lemma \ref{lem:svle} and convergence against the continuous test function $\rho^{-1}-u_0^{-1}$, and the Riemann sum approximation to the integral for $-\log(u_0/\rho)$, to obtain the convergence of the first line to $S_\rho(u_0)$ in $\QQ^N$-probability. Thanks to the total mass bound in the construction of $\QQ^N$, the left-hand side is $\QQ^N$-almost surely bounded, say by $$a\sup_x|\rho^{-1}-u_0^{-1}|+\sup|\log(u_0/\rho)|$$  and bounded convergence yields the same convergence in expectation. Meanwhile, since $a>\langle 1, u_0\rangle$, Lemma \ref{lem:svle} shows that the conditioning factor in the second line of \eqref{eq:logyno} converges to $1$, and hence the contribution to \eqref{eq:logyno} vanishes in the limit. All together, we have proven that \begin{equation}\begin{split}\label{eq:conv-init-cost}&\limsup_N \mathbb{E}_{\QQ^N}\left[\frac1{N^d}\log Y^N_0\right]=S_\rho(u_0). \end{split}\end{equation}
We now turn to the dynamic cost arising from $Z_T^N$. By Lemma~\ref{lem:girsanov},
\begin{equation}\label{eq:entropy-exact}\begin{split}
&\E_{\QQ^{N,K}} \log Z_T^N
\\&=\E_{\QQ^{N,K}}\Bigg[\int_0^T\!N^2\sum_{x\sim y}\int_0^1
\Big(r_t(x,y,p,\xi_t^N)\log r_t(x,y,p,\xi_t^N)-r_t(x,y,p,\xi_t^N)+1\Big)\,\dd p\,\dd t\Bigg].\end{split}
\end{equation}
Since $u$ and $H$ are smooth and the cutoff is bounded by $K$, one has $|\vartheta_t^N(x,y,p,\xi)|\le C_{H,K}N^{-1}$ uniformly, and hence
\[
\Big|r_t(x,y,p,\xi)\log r_t(x,y,p,\xi)-r_t(x,y,p,\xi)+1-\frac12\vartheta_t^N(x,y,p,\xi)^2\Big| \le C_{H,K}N^{-3}
\]
uniformly in $t, \{x,y\},p$ and $\xi\in \Lambda_{N,a}$. The remaining error is negligible after multiplying by $N^2$ and averaging over edges. Expanding the square and integrating in $p$ gives the local function $\Upsilon_K$, so that Lemma~\ref{lem:replacement-entropy} yields
\[
\limsup_{N\to\infty}\frac1{N^d}\E_{\QQ^{N,K}}\log Z_T^N
\le \frac12\int_0^T\!\int_{\TTd}\chi_K(t,x)^2\Gamma_K(u(t,x))|\nabla H(t,x)|^2\,\dd x\,\dd t.
\]
Since $\chi_K=A_K\circ u$, the definitions of $A_K, R_K$ at \eqref{eq:def-AK-RK} produce
\[
\frac12\int_0^T\!\int_{\TTd}R_K(u(t,x))u(t,x)^2|\nabla H(t,x)|^2\,\dd x\,\dd t.
\]
Combining this with \eqref{eq:conv-init-cost} and the choice \eqref{eq:choice-of-K} of $K$ proves the claim.
\end{proof}

\subsection{Proof of Propositions \ref{prop:replacement} - \ref{prop:l2-estimate}}\label{subsec:proof-technical-inputs}
We now prove the two technical inputs stated above, starting with \ref{prop:l2-estimate}. The strategy is to close a pathwise $L^\infty_tH^{-1}_x$-estimate, exploiting the boundedness of the drift. To do this, let $G_N$ be the Green kernel for the (massive) discrete Laplacian $-\Delta_N+1$, that is,
\begin{equation}\label{eq:green-def}
(-\Delta_{N,x}+1)G_N(x,y)=N^d1(x=y).
\end{equation} We use the facts, in the sequel, that, in any dimension, $G_N(x,y)=G_N(x-y)$ is a translationally invariant function, satisfying \begin{equation}
	\label{eq: useful facts about g} \frac1{N^d}\sum_{x\in \TND} G_N(x)=1; \qquad \limsup_N \frac{G_N(0)}{N^d}<\infty
\end{equation} and that, for any $f:\TND\to \RR$, \begin{equation} \label{eq: GN pos def}
	\langle f, G_Nf\rangle_N\ge 0; \qquad G_Nf(x):=\frac1{N^d}\sum_{y\in \TND}G_N(x-y)f(y).
\end{equation}
\begin{lemma}[Quadratic Lyapunov estimate]\label{lem:quadratic-generator}
Define
\begin{equation}\label{eq:quadratic-functional}
F_N(\xi):=\frac1{N^{2d}}\sum_{x,y\in\TND} G_N(x-y)\xi(x)\xi(y)
=\langle \xi,G_N\xi\rangle_N.
\end{equation}
There exist constants $c,C<\infty$, depending only on $K,\|\nabla H\|_{L^\infty_{t,x}}$, such that for every $N$ sufficiently large, every $t\in [0,T]$,
\begin{equation}\label{eq:quadratic-generator-estimate}
\cL_t^{N,H,K}F_N(\xi)
\le CF_N(\xi)-c\|\xi\|_{L^2_N}^2.
\end{equation}
\end{lemma}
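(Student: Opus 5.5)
We compute $\cL_t^{N,H,K}F_N$ edge by edge and split it into an untilted part and a tilt correction. For a single jump across $\{x,y\}$ with parameter $p$, write $\Delta_{xy}=(1-p)\xi(x)-p\xi(y)$. The jump adds $-\Delta_{xy}$ at $x$ and $+\Delta_{xy}$ at $y$. Since $F_N$ is quadratic, we get exactly
\[
F_N(\xi^{x,y,p})-F_N(\xi)=\frac{2}{N^d}\Delta_{xy}\big(G_N\xi(y)-G_N\xi(x)\big)+\frac{\Delta_{xy}^2}{N^{2d}}\big(2G_N(0)-2G_N(e)\big),
\]
where $e=y-x$. We then multiply by $r_t=1+(r_t-1)$, integrate in $p$, and sum with the prefactor $N^2$.

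\textbf{Untilted part.} Using $\int_0^1\Delta_{xy}\,dp=\frac12(\xi(x)-\xi(y))$ and summation by parts, the linear term becomes
\[
\frac{N^2}{N^d}\sum_{x\sim y}(\xi(x)-\xi(y))(G_N\xi(y)-G_N\xi(x))=\langle \xi,\Delta_N G_N\xi\rangle_N .
\]
By \eqref{eq:green-def}, $\Delta_NG_N\xi=G_N\xi-\xi$, so this term equals $F_N(\xi)-\|\xi\|_{L^2_N}^2$. This is the source of the negative term $-c\|\xi\|^2_{L^2_N}$.

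The quadratic term is a positive error of size $N^{2-2d}(G_N(0)-G_N(e))\sum_{x\sim y}\int_0^1\Delta_{xy}^2\,dp$. It is at most $C N^{2-d}\,\frac{G_N(0)-G_N(e)}{N^d}\|\xi\|_{L^2_N}^2$. We must show $N^2(G_N(0)-G_N(e))/N^d$ is bounded by a constant strictly below $1$ (after summing over the $d$ directions), so that it does not cancel the $-\|\xi\|^2$. To see this, evaluate \eqref{eq:green-def} at $x=y$: this gives $2dN^2(G_N(0)-G_N(e))/N^{d}\cdot\frac{1}{2d}\cdot\ldots$, precisely
\[
\sum_{e}N^2\big(G_N(0)-G_N(e)\big)=N^d-G_N(0).
\]
So this error totals $\frac{1}{N^d}(N^d-G_N(0))\cdot\kappa\|\xi\|^2_{L^2_N}$, where $\kappa=\sup\int_0^1\Delta^2\,dp/(\xi(x)^2+\xi(y)^2)\le\frac13$. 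The diffusive term therefore contributes at most
\[
F_N(\xi)-\Big(1-\tfrac13\Big)\|\xi\|_{L^2_N}^2 .
\]
This constant bookkeeping, checking that the second-order term really is strictly dominated, is the step I expect to be most delicate. It may need the sharper identity $\int_0^1\Delta^2\,dp=\frac13(\xi(x)^2+\xi(y)^2)-\frac13\xi(x)\xi(y)$ together with the symmetry of the sum.

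\textbf{Tilt correction.} By \eqref{eq:theta-small}, $|r_t-1|\le C_{H,K}N^{-1}$. Moreover $r_t-1=\vartheta^N_t+O(N^{-2})$ with $|\vartheta^N_t|\le C_{H,K}K N^{-1}$ uniformly in $\xi$, because of the cutoff. So the tilt contributes
\[
\frac{2N^2}{N^d}\sum_{x\sim y}\int_0^1(r_t-1)\Delta_{xy}\big(G_N\xi(y)-G_N\xi(x)\big)\,dp ,
\]
plus a quadratic piece of relative size $N^{-1}$ compared with the term already handled, which is absorbed for large $N$. The main piece is bounded by
\[
C_{H,K}\frac{N}{N^d}\sum_{x\sim y}(\xi(x)+\xi(y))\,|G_N\xi(y)-G_N\xi(x)| .
\]
Using Young's inequality, this is at most $\eta\|\xi\|_{L^2_N}^2+C_\eta\,\frac{1}{N^d}\sum_{x\sim y}N^2|G_N\xi(y)-G_N\xi(x)|^2$. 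The last sum equals $\langle G_N\xi,-\Delta_NG_N\xi\rangle_N\le\langle G_N\xi,(-\Delta_N+1)G_N\xi\rangle_N=F_N(\xi)$.

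Choosing $\eta$ small and combining the two parts yields \eqref{eq:quadratic-generator-estimate}, with $C=1+C_\eta$ and $c=\frac23-\eta-O(N^{-1})$.
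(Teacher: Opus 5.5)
Your proposal is correct and follows essentially the paper's route: you expand $F_N$ exactly to second order along a jump, use Green's identity to turn the untilted linear term into $F_N(\xi)-\|\xi\|_{L^2_N}^2$, use the relation $2dN^2\Gamma_N=N^d-G_N(0)$ from the Green equation at the origin to control the second-order term, and handle the tilt with $|r_t-1|\le C_{H,K}N^{-1}$, Young's inequality and the Dirichlet-form bound by $F_N(\xi)$. The one slip is a lost factor $2$ in the second-order term: it is bounded by $2\kappa\,(1-G_N(0)/N^d)\|\xi\|_{L^2_N}^2\le\frac23\|\xi\|_{L^2_N}^2$, so you get $c=\frac13-\eta-O(N^{-1})$ rather than $\frac23-\eta$; this is still positive, so your conclusion stands (the paper's cruder bound $e^{\vartheta}\le\frac54$ gives a margin of $\frac16-\zeta$).
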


\begin{remark}
	This is the point at which the regularity of $H$, inherited from $u\in \cR$ via Lemma \ref{lem:optimal-drift-smooth}, enters the argument. For general drift fields, the constant $C$ would diverge and we would not be able to recover Proposition \ref{prop:convergence-under-Q}. 
\end{remark}

\begin{proof}
Fix an edge $\{x,y\}$ and $p\in [0,1]$. To simplify notation, write
\[
s_{xy}:=\xi(x)+\xi(y),\qquad d_{xy}:=\xi(y)-\xi(x),\qquad m:=p-\frac12,\qquad \psi:=G_N\xi.
\]
In this notation, the jump increment $\delta^{x,y,p}:=\xi^{x,y,p}-\xi$ satisfies
\[
\delta^{x,y,p}(x)=ms_{xy}+\frac12 d_{xy},
\qquad
\delta^{x,y,p}(y)=-ms_{xy}-\frac12 d_{xy},
\]
and vanishes elsewhere. Since $G_N$ is symmetric,
\[
F_N(\xi^{x,y,p})-F_N(\xi)
=2\langle \delta^{x,y,p},G_N\xi\rangle_N+\langle \delta^{x,y,p},G_N\delta^{x,y,p}\rangle_N.
\]
We analyse these two terms separately. For the linear part, because $\delta^{x,y,p}$ is supported on $\{x,y\}$,
\[
2\langle \delta^{x,y,p},G_N\xi\rangle_N
=\frac{2}{N^d}\Big(ms_{xy}+\frac12 d_{xy}\Big)\big(\psi(x)-\psi(y)\big).
\]
By \eqref{eq:theta-small}, the tilting factor $r_t$ in \eqref{eq:def-r-cutoff}, \eqref{eq:tilted-generator-2} satisfies\[
|r_t(x,y,\xi,p)-1|\le C_{H,K}N^{-1}
\]
uniformly in all variables. After multiplication by $N^2$ and integration in $p$, this gives
\begin{align}
&N^2\int_0^1 r_t(x,y,\xi,p)2\langle \delta^{x,y,p},G_N\xi\rangle_N\,dp \nonumber\\
&\qquad=\frac{N^2}{N^d}\big(\psi(x)-\psi(y)\big)d_{xy}+\widetilde R_{N,t}^{(1)}(x,y;\xi),
\label{eq:linear-part-newtilt}
\end{align}
where the remainder satisfies
\begin{equation}\label{eq:linear-remainder-newtilt}
|\widetilde R_{N,t}^{(1)}(x,y;\xi)|
\le \frac{C_{H,K}}{N^{d}}\,s_{xy}\,N|\psi(y)-\psi(x)|.
\end{equation}
Summing the principal term in \eqref{eq:linear-part-newtilt} over all edges, using discrete
summation by parts and the definition of $\psi$, we obtain
\begin{equation} \label{eq:exact-diffusion-quadratic-new}
\frac{N^2}{N^d}\sum_{x\sim y}
(\psi(x)-\psi(y))d_{xy}
=
F_N(\xi)-\|\xi\|_{L^2_N}^2.
\end{equation}

To control the remainder, we apply Cauchy--Schwarz:
\begin{align*}
\sum_{x\sim y}|\widetilde R_{N,t}^{(1)}(x,y;\xi)|
&\le C_{H,K}\Big(\frac1{N^d}\sum_{x\sim y}s_{xy}^2\Big)^{1/2}
\Big(\frac1{N^d}\sum_{x\sim y}N^2|\psi(y)-\psi(x)|^2\Big)^{1/2} \\
&\le C_{H,K}\,\|\xi\|_{L^2_N}\,F_N(\xi)^{1/2},
\end{align*}
where we used the discrete Green identity and the definition of $\psi$ to identify the second term:
\[
\frac1{2N^d}\sum_{x\sim y}N^2|\psi(y)-\psi(x)|^2+\|\psi\|_{L^2_N}^2
=\langle \psi,(-\Delta_N+1)\psi\rangle_N
=F_N(\xi).
\]
Using Peter-Paul, for all $\zeta>0$, we find a constant $C_{\zeta,H,K}$ such that
\begin{equation}\label{eq:linear-remainder-absorbed}
\sum_{x\sim y}|\widetilde R_{N,t}^{(1)}(x,y;\xi)|
\le \zeta \|\xi\|_{L^2_N}^2+C_{\zeta,H,K}F(\xi).
\end{equation}

For the quadratic part, translation invariance gives
\[
\langle\delta_{x,y,p},G_N\delta_{x,y,p}\rangle_N
=
\frac{2\Gamma_N}{N^{2d}}
\left(ms_{xy}+\frac12 d_{xy}\right)^2
; \qquad 
\Gamma_N:=G_N(0)-G_N(e_1/N).
\]
Evaluating the Green equation at the origin yields
\[
2dN^2\Gamma_N+G_N(0)=N^d, \quad \text{ and hence }\quad  0\le \Gamma_N\le \frac{N^{d-2}}{2d}.
\]

Since $e^{\vartheta_{xy}^N(t,p,\xi)}\le \frac54$ for large \(N\),

\begin{equation}\begin{split}
N^2\sum_{x\sim y}\int_0^1
e^{\vartheta^N_{xy}(t,p,\xi)}
\langle\delta_{x,y,p},G_N\delta_{x,y,p}\rangle_N\,dp 
&\le
\frac{5}{4dN^d}
\sum_{x\sim y}
\int_0^1
\left(ms_{xy}+\frac12 d_{xy}\right)^2\,dp \\
&\le
\frac{5}{12dN^d}
\sum_{x\sim y}
\bigl(\xi(x)^2+\xi(y)^2\bigr)
=
\frac56\|\xi\|_{L^2_N}^2.
\end{split}\end{equation}
Combining this with \eqref{eq:exact-diffusion-quadratic-new} and \eqref{eq:linear-remainder-absorbed}, and choosing $\zeta<\frac16$ yields \eqref{eq:quadratic-generator-estimate}.
\end{proof}

\begin{proof}[Proof of Proposition~\ref{prop:l2-estimate}]
Let $F_N, C, c$ be as in Lemma \ref{lem:quadratic-generator}, and set
\[
H_t^N:=e^{-Ct}F_N(\xi_t^N).
\]
By Dynkin's formula,
\[
H_t^N-H_0^N-\int_0^t e^{-Cs}\big((\cL_s^{N,H,K}-C)F_N\big)(\xi_s^N)\,ds
\]
is a martingale under $\QQ^{N,K}$. Taking expectations and using Lemma~\ref{lem:quadratic-generator}, we obtain
\[
\E_{\QQ^{N,K}}[H_T^N]
+c\E_{\QQ^{N,K}}\Big[\int_0^T e^{-Cs}\|\xi_s^N\|_{L^2_N}^2\,ds\Big]
\le \E_{\QQ^{N,K}}[H_0^N]+C\int_0^T e^{-Cs}\,ds.
\]
The evaluation at $T$ is nonnegative and can be dropped thanks to \eqref{eq: GN pos def}. The initial term is uniformly bounded by recalling that $\xi^N_0\sim \nu^N_{u_0,a}$ under $\QQ^{N,K}$, which implies that, for sufficiently large $N$, $$ \mathbb{E}_{\QQ^{N,K}} \xi_0(x)\xi_0(y)\le 2u_0(x)(u_0(y)+1(x=y)u_0(x)). $$ For all such $N$, \begin{equation}\begin{split}
	\EE_{\QQ^{N,K}} F(\xi^N_0) &=\frac1{N^{2d}}\sum_{x,y\in \TND} G_N(x-y)\EE_{\QQ^{N,K}}\xi^N(x)\xi^N(y) \\ & \le 2\|u_0\|_{L^\infty_x}^2\left(\frac1{N^d}\sum_{x\in \TND} G_N(x)\right)+\frac{G_N(0)}{N^d}\|u_0\|_{L^\infty_x}^2
\end{split} \end{equation} which is bounded $\le C\|u_0\|_{L^\infty_x}^2$ as $N\to \infty$ thanks to \eqref{eq: useful facts about g}. We conclude that, for a new constant $C'$,
\[
c\E_{\QQ^{N,K}}\Big[\int_0^T \|\xi_s^N\|_{L^2_N}^2\,ds\Big]
\le C'
\]
and the proof of \eqref{eq:l2-estimate-main} is complete.
\end{proof}

Before turning to the proof of Proposition~\ref{prop:replacement}, we give the statement of superexponential estimate for {\em bounded, Lipschitz} observables.  \begin{lemma} \label{lem:supex} Let $F$ be a bounded, Lipschitz local function $F:\Lambda_N\to \RR$. Then, for every test function $\varphi\in C([0,T]\times \TTd)$ and  every $\delta>0$, the errors $V^{F,N,\varphi}_\eps(t,\xi)$ defined in \eqref{eq: def V} satisfy
\begin{equation}\label{eq:bounded-superexp-replacement}
\limsup_{\eps\downarrow0}\limsup_{N\to\infty}
\frac1{N^d}
\log \PP\left(
\left|
\int_0^T V^{F,N,\varphi}_\eps(t,\xi^N_t)\,\dd t
\right|>\delta
\right)
=-\infty .
\end{equation}

\end{lemma}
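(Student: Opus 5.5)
The plan is to follow the classical Kipnis--Olla--Varadhan scheme for the superexponential one-block and two-block estimates (as in \cite{kipnis1989hydrodynamics}, \cite[Chapter 10]{kipnis1998scaling}), adapted to the continuous, non-compact single-site space $[0,\infty)$. We take $\PP$ to be the law of the untilted KMP process started from $\nu^N_\rho$, which is reversible. Splitting $\varphi$ into positive and negative parts, and using the exponential Chebyshev inequality with parameter $\gamma N^d$, the claim reduces to showing that
\[
\limsup_{\eps\downarrow 0}\limsup_N \frac1{N^d}\log \EE\exp\Big(\gamma N^d\Big|\int_0^T V^{F,N,\varphi}_\eps\,\dd t\Big|\Big)\le C_\gamma
\]
for every $\gamma>0$, with $C_\gamma$ independent of $\gamma$ (or at least $o(\gamma)$). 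This gives $-\infty$ after letting $\gamma\to\infty$. By the Feynman--Kac formula and reversibility of $\nu^N_\rho$, the exponential moment is bounded by $\exp\big(T\sup_f\{\gamma N^d\langle V,f\rangle_{\nu_\rho}-N^2\mathcal D_N(\sqrt f)\}\big)$, where $\mathcal D_N$ is the Dirichlet form of $\sum_{x\sim y}L_{x,y}$ and the supremum runs over densities $f$ with respect to $\nu^N_\rho$. Since $V$ depends on time only through $\varphi$, we first approximate $\varphi$ by a piecewise-constant-in-time function. This reduces the problem to a static variational problem in which it suffices to show that
\[
\sup_{f:\ \mathcal D_N(\sqrt f)\le C\gamma N^{d-2}} \langle V_\eps,f\rangle_{\nu_\rho}\to 0 .
\]

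This is the standard entropy/Dirichlet-form reduction. Because $F$ is bounded, $|V|\le 2\|F\|_\infty\|\varphi\|_\infty$, so the variational expression is finite and the cost of restricting to $f$ with bounded Dirichlet form is harmless. Next we carry out the one-block step. Translation-averaging $f$ and projecting onto a box of side $2\ell+1$ gives a density $\bar f_\ell$ on $[0,\infty)^{\Lambda_\ell}$ whose box Dirichlet form is $O(\gamma\ell^d N^{-2})\to 0$. Any weak limit is therefore invariant for the box KMP dynamics, and so it is a mixture over total energy of microcanonical measures; this is where ergodicity on each simplex $\partial\Lambda_{N,a}$, noted in the preliminaries, enters. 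The equivalence of ensembles then replaces the microcanonical expectation of $F$ by $\bar F$ evaluated at the block average, as $\ell\to\infty$, and $\bar F$ is continuous because $F$ is bounded Lipschitz. The non-compactness of $[0,\infty)$ has to be handled at this point. We truncate at large block densities, using that $F$ and $\bar F$ are bounded, so that the contribution from $\{\bar\xi^\ell>A\}$ is at most $2\|F\|_\infty$ times its $\bar f$-probability. Since the entropy of $f$ is at most $CN^d$ (here we use the exponential tails of $\nu_\rho$), this probability is small uniformly as $A\to\infty$. Weak compactness is also fine, because tightness of $\bar f_\ell\,\dd\nu_\rho$ follows from the same entropy bound.

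The two-block step compares $\bar\xi^\ell(x)$ with $\bar\xi^{N\eps}(x)$ via $\bar\xi^\ell(y)$ for $|x-y|\le N\eps$. Transporting energy between distant boxes is done by the usual path-moving argument: the Dirichlet form for the exchange of whole configurations between two boxes at distance $N\eps$ is bounded by $(N\eps)^2$ times the nearest-neighbour form. We then repeat the one-block compactness on pairs of boxes, which forces the two block averages to coincide in the limit. The Lipschitz property of $\bar F$ closes the argument. I expect the main obstacle to be precisely this step for a continuous-energy, non-exclusion model: the path-moving bound has to be carried out for KMP redistribution moves, which are not swaps. To handle it, I would first note that the uniform resampling on an edge dominates, in the sense of Dirichlet forms, the swap $\xi(x)\leftrightarrow\xi(y)$, since $p=1-\xi(x)/s_{xy}$ realises the swap and the density of $p$ is bounded below. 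This lets us reduce to the standard exchange-based comparison.
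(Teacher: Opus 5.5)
Your overall scheme (exponential Chebyshev, Feynman--Kac, then one- and two-block estimates via compactness and equivalence of ensembles) is the one the paper defers to in \cite{benois1995large,kipnis1998scaling}. However, the step you single out as KMP-specific, the control of high energies, rests on a bound that is not available. In the superexponential argument the supremum in the Feynman--Kac bound runs over \emph{all} densities $f$ with respect to $\nu^N_\rho$. Such an $f$ is not the time-averaged law of the process, so there is no estimate $\Ent(f\nu^N_\rho\,|\,\nu^N_\rho)\le CN^d$, and the Dirichlet-form constraint does not replace it. Concretely, take $\rho'>\rho$ and $f=d\nu^N_{\rho'}/d\nu^N_\rho$. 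This $f$ is a function of the conserved total energy, so $\mathcal D_N(\sqrt f)=0$. Yet $f\nu^N_\rho=\nu^N_{\rho'}$ gives probability tending to one to $\{\bar\xi^\ell(0)>A\}$ as $\rho'\to\infty$, and its box marginals are not tight. Hence both claims fail for the class of $f$ you actually have to handle: the $\bar f$-probability of $\{\bar\xi^\ell>A\}$ is not uniformly small, and $\bar f_\ell\,d\nu_\rho$ is not tight. The compactness step of the one-block estimate, and likewise the two-block estimate, is therefore not justified as written.

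The missing idea is to impose an energy bound by hand before applying Feynman--Kac.
\begin{itemize}
\item By Cram\'er's theorem, $\PP(\langle 1,\pi^N(\xi^N_0)\rangle>a)\le e^{-N^dI_\rho(a)}$, with $I_\rho(a)=a/\rho-1-\log(a/\rho)\to\infty$ as $a\to\infty$.
\item Since $\Lambda_{N,a}$ is invariant, the process on $\Lambda_{N,a}$ is reversible for $\nu^N_\rho(\cdot\,|\,\Lambda_{N,a})$. Run your argument with this measure, so every admissible $f$ is supported in $\Lambda_{N,a}$.
\item The translation-averaged box marginals then have first moment at most $a(2\ell+1)^d$, which gives tightness.
\item Deterministically $N^{-d}\#\{x:\bar\xi^\ell(x)>A\}\le a/A$, so the high-density part of $V$ is at most $2\|F\|_\infty\|\varphi\|_\infty a/A$. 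This, rather than an entropy bound, is where the boundedness of $F$ enters.
\end{itemize}
One then concludes by letting $\gamma\to\infty$ and then $a\to\infty$.

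Two smaller points, writing $\sigma^{xy}$ for the swap of $\xi(x),\xi(y)$ and $D_{xy}$ for the KMP edge Dirichlet form. First, the swap domination is true, but not because the swapping value of $p$ has positive density: it is a single point, hence a null set. The correct reason is that $\xi^{x,y,p}=(\sigma^{xy}\xi)^{x,y,p}$. Passing through this common resampled state and using the swap-invariance of $\nu_\rho$ gives $D^{\rm swap}_{xy}\le 4D_{xy}$. Second, in the two-block step the long-range move must be a single-site swap, for example between the two box centres. Exchanging entire box configurations, together with the within-box dynamics, only symmetrises the law of the two box energies and does not force the block averages to coincide. A centre-to-centre swap, by contrast, makes the dynamics on the union of the two boxes ergodic on each energy simplex.
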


The method is essentially the same as in \cite[Theorem 2]{benois1995large}, \cite[Lemma 1.10]{kipnis1998scaling}, and so we will not reproduce it. We remark that the boundedness of $F$ is essential in order to complete the compactness arguments necessary for both the one-block estimate \cite[Lemma 4.2]{benois1995large},\cite[Lemma 3.1]{kipnis1998scaling} and the two-block estimate \cite[Lemma 4.3]{benois1995large}. We are now ready to give the \begin{proof}[Proof of Proposition~\ref{prop:replacement}]
Let $F$ be a local, Lipschitz function, and $\varphi \in C([0,T]\times\TTd)$ as in the statement of the proposition, and fix $\delta, \zeta>0$. The proof proceeds by truncation, using Proposition \ref{prop:l2-estimate}, and then transferring Lemma \ref{lem:supex}, applied to truncated functions $F^{(L)}$, to $\QQ^{N,K}$. \\ \\ We first truncate at large values. By hypothesis, there exists a finite set $A$ and a constant $C$ such that $F$ depends only on the values of $\xi$ on $A$, and
\begin{equation}\label{eq:linear-growth-local-F}
    |F(\xi)|\le C\Big(1+\sum_{z\in A}\xi(z)\Big).
\end{equation} For $L>0$, we define a local function $F^{(L)}$ by
\[
    F^{(L)}(\xi):=F(\xi\land L)
\] with the minimum understood coordinatewise, so $(\xi\land L)(x):=\xi(x)\land L$. Since $F$ is locally Lipschitz in $\xi|_A\in [0,\infty)^A$, $F^{(L)}$ is bounded and Lipschitz, and agrees with $F$ unless at least one coordinate exceeds $L$. Thanks to \eqref{eq:linear-growth-local-F}, possibly increasing the constant, we therefore obtain
\begin{equation} \label{eq: truncate F error}
    |F(\xi)-F^{(L)}(\xi)|
    \le
    \frac{C}{L}
    \Big(1+\sum_{z\in A}\xi(z)\Big)^2 .
\end{equation}

Letting $\bar{F}^{(L)}$ be the associated function of the local average, we have the decomposition \begin{equation} \begin{split} \label{eq: decompose replacement trunctation} |V^{F,N,\varphi}_\eps(t,\xi)| \le |V^{F^L,N,\varphi}_\eps(t,\xi)|&+\frac{\|\varphi\|_\infty}{N^d}\sum_{x\in\TND}\int_0^T
\big|\tau_xF(\xi)-\tau_xF^{(L)}(\xi)\big|  \\ & +\frac1{N^d}\sum_{x\in\TND}\int_0^T
\big|\bar{F}(\bar\xi^{N\eps}(x))-\bar F^{(L)}(\bar\xi^{N\eps}(x))\big|.
\end{split}\end{equation} The first term is controlled by \eqref{eq: truncate F error} and the second moment bound:
\begin{align}
&\E_{\QQ^{N,K}}\left[
\frac1{N^d}\sum_{x\in\TND}\int_0^T
\big|\tau_xF(\xi_t^N)-\tau_xF^{(L)}(\xi_t^N)\big|
\,\dd t
\right] \nonumber \\
&\qquad\le
\frac{C_F}{L}
\E_{\QQ^{N,K}}\left[
\int_0^T
\frac1{N^d}\sum_{x\in\TND}
\Big(1+\sum_{z\in A}\xi_t^N(x+z)\Big)^2
\,\dd t
\right] \nonumber \\
&\qquad\le
\frac{C_{F,A}}{L}
\E_{\QQ^{N,K}}\left[
\int_0^T
\big(1+\|\xi_t^N\|_{L^2_N}^2\big)\,\dd t
\right].
\label{eq:replacement-tail-microscopic}
\end{align}
By Proposition~\ref{prop:l2-estimate}, the right-hand side is bounded by
$C_{F,A,K}/L$, uniformly in $N$. For the fixed $\delta, \zeta>0$, we may thus choose $L$ large enough that \begin{equation} \label{eq: cutoff 1}
	\QQ^{N,K}\left(\frac1{N^d}\sum_{x\in\TND}\int_0^T
\big|\tau_xF(\xi_t^N)-\tau_xF^{(L)}(\xi_t^N)\big|
\,\dd t> \frac{\delta}{3\|\varphi\|_\infty} \right) \le \frac{\zeta}3.
\end{equation} For the functions $\bar{F}, \bar{F}^{(L)}$, taking $\nu_\rho$-expectations of \eqref{eq: truncate F error} gives
\begin{equation}\label{eq:barF-tail-bound}
    |\bar F(\rho)-\bar F^{(L)}(\rho)|
    \le
    \frac{C_{F,A}}{L}(1+\rho^2),
    \qquad \rho\ge 0 .\end{equation}
Using Jensen's inequality for the block average, and enlarging $L$ if necessary, the same argument produces, for sufficiently large $L$,\begin{equation} 	\QQ^{N,K}\left(\frac1{N^d}\sum_{x\in\TND}\int_0^T
\big|\bar{F}(\bar\xi_t^{N\eps}(x))-\bar F^{(L)}(\bar\xi_t^{N\eps}(x))\big|
\,\dd t> \frac{\delta}{3\|\varphi\|_\infty} \right) \le \frac{\zeta}3. \label{eq: cutoff 2}
\end{equation}

Choosing $L$ large enough that both \eqref{eq: cutoff 1} - \eqref{eq: cutoff 2} hold, we now deal with the first term in \eqref{eq: decompose replacement trunctation}, using Lemmata \ref{lem:ent-crude} and \ref{lem:supex}. Write $A_{N,L,\eps,\delta}$ for the event in \eqref{eq:bounded-superexp-replacement} with $F^L$ replacing $F$, $\frac\delta3$ replacing $\delta$: $$A_{N,L,\eps,
\delta}:=\left\{\left|\int_0^T V^{F^L,N,\varphi}_\eps(t,\xi^N(t)) dt\right|>\frac{\delta}{3}\right\}. $$ The entropy inequality produces
\[
    \QQ^{N,K}(A_{N,L,\eps,\delta})
    \le
    \frac{\Ent(\QQ^{N,K}\mid\PP)+\log 2}
         {\log(1+\PP(A_{N,L,\eps,\delta})^{-1})}.
\]
Thanks to Lemma \ref{lem:ent-crude}, the numerator is at most $C_{H,K}N^d$ for sufficiently large $N$ and some constant $C_{H,K}$ independent of $\eps$. Meanwhile, Lemma \ref{lem:supex} applies to $F^L$, and so we may choose $\eps_0>0$, depending on $L$, such that the denominator is at least $3C_{H,K}N^d/\zeta$ for all $0<\eps<\eps_0$ and $N$ sufficiently large, depending on $\eps$. With these choices, we have, for all $0<\eps<\eps_0$,\begin{equation}\label{eq:bounded-replacement-under-Q}
\limsup_{N\to\infty}
\QQ^{N,K}\left(A_{N,L,\eps,\delta}\right)\le \frac\zeta3.
\end{equation} Gathering \eqref{eq: cutoff 1}, \eqref{eq: cutoff 2}, \eqref{eq:bounded-replacement-under-Q} and returning to \eqref{eq: decompose replacement trunctation}, it follows that, for all sufficiently small $\eps>0$, \begin{equation}
	\limsup_{N\to \infty}\QQ^{N,K}\left(\left|\int_0^T V^{F,N,\varphi}_\eps(t,\xi^N_t) dt\right|\ge \delta \right) \le \zeta
\end{equation} and, since $\delta, \zeta>0$ were arbitrary, the proof of Proposition \ref{prop:replacement} is complete. 
\end{proof}
\section{Construction of Pathological Trajectories} \label{sec:pathological}

In this section we prove the three cases of Theorem \ref{thm:intro-pathological}. In light of Theorem \ref{thm:intro-restricted-lb}, it is sufficient to argue at the level of the lower semicontinuous envelope of the set $\cR$. For each part of the theorem, we construct $u^n\in \cR$ converging to the desired $\Xi$, and with \begin{equation} \limsup_n \cJ(u^n)\begin{cases} <\infty, & d=1,2;\\ =0, &d\ge 3.\end{cases} \end{equation} In each case, we will construct $u^n$ with a lower bound $u^n_0\ge c>0$, and the statements in the theorem follow using the bound $$S_\rho(u^n_0)\le \frac{\langle 1, u^n_0\rangle}{\rho} + (\log c/\rho)_-.$$
It is convenient to introduce some machinery common to all three cases. In any dimension, let $\rho:\RRd\to [0,\infty)$ be a smooth, radial function, with $\int \rho dx=1$ and $\text{supp}(\rho)\subset \{|x|< \frac12\}$, which we further take to satisfy, for every $\theta>0$ \begin{equation}\label{eq: power law integrability}\int_{\RRd}\frac{|\nabla \rho(x)|^2}{\rho(x)^{2-\theta}}dx<\infty. \end{equation} For $\sigma\in(0,1]$, define
\begin{equation}\label{eq:rho-sigma}
\rho_\sigma(x):=\frac1{\sigma^d}\rho\left(\frac{x}{\sigma}\right)\end{equation} which we now understand to be a function on the torus $\TTd$, thanks to the support bound and the restriction on $\sigma$. We also fix a smooth, nondecreasing temporal cutoff function $\psi$ satisfying \begin{equation}
	\label{eq:psi construct} \begin{cases} \psi(t)=1, & t\le \frac32; \\ \psi(t)=t, & t\ge 2. \end{cases}
\end{equation}

We first give a general lemma. \begin{lemma}
	\label{lem:rho-disipation-est} Let $\rho$ be a cutoff satisfying \eqref{eq: power law integrability}. Then, for any $\theta\in(0,1)$, there exists a constant $C<\infty$ such that, for all $\epsilon>0, \sigma\in(0,\frac12)$, it holds that \begin{equation}
		\int_{\TTd} \frac{\epsilon \rho_\sigma(x)}{1+\epsilon \rho_\sigma(x)}|\nabla \log \rho_\sigma(x)|^2 dx \le C\epsilon^{1-\theta}\sigma^{d\theta-2}.
	\end{equation}
\end{lemma}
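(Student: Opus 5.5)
The plan is to rescale to the unit profile $\rho$ and then interpolate pointwise between the two trivial bounds on the weight. Since $\rho_\sigma$ is supported in a ball of radius $\sigma/2$, the integrand vanishes outside this support. There we have $\nabla\log\rho_\sigma(x)=\sigma^{-1}(\nabla\log\rho)(x/\sigma)$.

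The key elementary inequality is that, for every $\theta\in(0,1)$ and $s\ge 0$,
\[
\frac{s}{1+s}\le s^{1-\theta}.
\]
This holds because $\frac{s}{1+s}\le\min(1,s)\le s^{1-\theta}$: when $s\le1$ we use $s\le s^{1-\theta}$, and when $s\ge1$ we use $1\le s^{1-\theta}$. We apply it with $s=\epsilon\rho_\sigma(x)$ to bound the integrand by
\[
\epsilon^{1-\theta}\rho_\sigma(x)^{1-\theta}\,\frac{|\nabla\rho_\sigma(x)|^2}{\rho_\sigma(x)^2}
=\epsilon^{1-\theta}\frac{|\nabla\rho_\sigma(x)|^2}{\rho_\sigma(x)^{1+\theta}}.
\]

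Next we change variables $x=\sigma y$, so that $dx=\sigma^d\,dy$. We have $\rho_\sigma(\sigma y)=\sigma^{-d}\rho(y)$ and $\nabla\rho_\sigma(\sigma y)=\sigma^{-d-1}\nabla\rho(y)$. Collecting powers of $\sigma$ gives $\sigma^{-2d-2}\cdot\sigma^{d(1+\theta)}\cdot\sigma^{d}=\sigma^{d\theta-2}$, so the integral is at most
\[
\epsilon^{1-\theta}\sigma^{d\theta-2}\int_{\RRd}\frac{|\nabla\rho(y)|^2}{\rho(y)^{1+\theta}}\,dy .
\]
Here we used that, for $\sigma<\tfrac12$, the support of $\rho_\sigma$ lies in a fundamental domain, so the torus integral equals the integral over $\RRd$.

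The remaining integral is exactly \eqref{eq: power law integrability} with exponent $2-\theta'$, where $\theta'=1-\theta\in(0,1)$. It is therefore a finite constant $C$ depending only on $\theta$ and $\rho$. This gives the claim, uniformly in $\epsilon>0$ and $\sigma\in(0,\tfrac12)$.

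The only delicate point is the set where $\rho=0$ inside the support, where $\log\rho$ is singular. We interpret the integrand as $0$ wherever $\rho_\sigma=0$, which is consistent with the weight vanishing there. The finiteness of the resulting integral is precisely what assumption \eqref{eq: power law integrability} guarantees, so no further work is needed.
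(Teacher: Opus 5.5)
Your proof is correct and is essentially the paper's argument: the elementary bound $\frac{s}{1+s}\le s^{1-\theta}$, the scaling $x=\sigma y$, and \eqref{eq: power law integrability} applied with exponent $1+\theta$. The only difference is the order of steps, which is cosmetic: the paper rescales first and applies the bound to $z=\epsilon\rho/\sigma^d$, whereas you apply it to $\epsilon\rho_\sigma$ and then rescale.
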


\begin{proof} Write $I(\epsilon,\sigma)$ for the integral on the left-hand side. Using the definition of $\rho_\sigma$ and changing variables, we obtain \begin{equation}
	\begin{split} I(\epsilon,\sigma)=\sigma^{d-2}\int_{\RRd} \frac{\epsilon \rho(x)}{\sigma^d+\epsilon \rho(x)}|\nabla \log \rho(x)|^2 dx.\end{split}
\end{equation} Using the inequality $\frac{z}{1+z}\le z^{1-\theta}$ for $z\ge 0$, the prefactor is at most $$ \frac{\epsilon \rho(x)}{\sigma^d+\epsilon \rho(x)}\le \epsilon^{1-\theta} \sigma^{-d(1-\theta)}\rho(x)^{1-\theta}$$ and substituting this bound into the previous display produces \begin{equation}
	I(\epsilon,\sigma)\le \epsilon^{1-\theta}\sigma^{d\theta-2}\int_{\RRd}\frac{|\nabla \rho(x)|^2}{\rho(x)^{1+\theta}}dx \le C_\theta \epsilon^{1-\theta}\sigma^{d\theta-2}
\end{equation} where $C_\theta$ is the remaining integral, which is finite thanks to the hypothesis \eqref{eq: power law integrability}.
	
\end{proof}

\subsection{Dimension $d=1$: Appearance of Singularities}

We now prove \begin{proposition}
	Let $t_0\in [0,T]$. Then there exist a sequence of paths $u^n\in \cR, u^n_0\ge 1$ converging to $\Xi\in C([0,T],\cM_+(\TT))$ such that the unique continuous representative $\Xi(dt,dx)=\xi_t(dx)dt$ is singular with respect to Lebesgue measure at time $t_0$, and so that \begin{equation}
		\limsup_n \cJ(u^n)<\infty.
	\end{equation}
\end{proposition}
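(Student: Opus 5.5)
The plan is to write down an explicit profile that concentrates a fixed amount of mass into a shrinking bump as $t\to t_0$. We then split the optimal drift into a ``diffusive'' part and a ``transport'' part, and check that both are square-integrable in time once the shrinking rate is chosen correctly. Fix $m>0$ and the mollifier $\rho$ of \eqref{eq: power law integrability}, and an exponent $\alpha\in(\frac13,\frac12)$. Set $\sigma(t):=\frac14|t-t_0|^\alpha\wedge\frac14$, and let $\sigma_n(t)$ be a smooth regularisation of $\sigma(t)\vee\frac1n$ with $|\sigma_n'|\lesssim|\sigma'|$ and $\sigma_n\ge\frac1n$. Define
\[
u^n(t,x):=1+m\,\rho_{\sigma_n(t)}(x),\qquad \xi_t:=dx+m\,\rho_{\sigma(t)}\,dx\ (t\neq t_0),\qquad \xi_{t_0}:=dx+m\,\delta_0 .
\]
Each $u^n$ is smooth in $(t,x)$ and bounded below by $1$, with constant mass $1+m$, so $u^n\in\cR$ and $u^n_0\ge1$. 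Since $\rho_{\sigma}\to\delta_0$ weakly as $\sigma\downarrow0$, the map $t\mapsto\xi_t$ is continuous for $W$. Moreover $u^n\to\Xi$ in $\cX$ by dominated convergence against continuous test functions. By construction $\xi_{t_0}$ has a singular part $m\delta_0$, and $\xi_t$ has a smooth density for $t\ne t_0$.

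For the cost we use only the upper bound $\cJ(u^n)\le\frac12\|g^n\|^2_{L^2_{t,x}}$ for any admissible $g^n$. Since $\partial_t\rho_{\sigma}=-\partial_x\big(\rho_\sigma\, w_\sigma\big)$ with the dilation velocity $w_\sigma(t,x)=\frac{\sigma'(t)}{\sigma(t)}x$, we take
\[
g^n:=\tfrac12\partial_x\log u^n+\frac{m\rho_{\sigma_n}}{u^n}\,\frac{\sigma_n'}{\sigma_n}\,x .
\]
A direct computation gives $\partial_x(u^ng^n)=\frac12\partial_x^2u^n-\partial_tu^n$, so $u^n$ solves \eqref{eq: sk}.

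The diffusive part is controlled by Lemma \ref{lem:rho-disipation-est} with $d=1$, $\epsilon=m$, since $|\partial_x\log u^n|^2=\big(\frac{m\rho_\sigma}{1+m\rho_\sigma}\big)^2|\partial_x\log\rho_\sigma|^2$ and the square of a fraction below $1$ is bounded by the fraction itself. This yields
\[
\int_{\TT}|\partial_x\log u^n|^2dx\le C\sigma_n^{\theta-2}\le C|t-t_0|^{-\alpha(2-\theta)},
\]
which is integrable in $t$ uniformly in $n$ once $\theta>0$ is small, because $\alpha<\frac12$.

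For the transport part we bound $\big(\frac{m\rho_\sigma}{u^n}\big)^2\le \mathbf 1_{|x|<\sigma/2}$. Hence
\[
\int_\TT\Big|\frac{m\rho_{\sigma_n}}{u^n}\frac{\sigma_n'}{\sigma_n}x\Big|^2dx\le C\frac{(\sigma_n')^2}{\sigma_n^2}\sigma_n^3=C(\sigma_n')^2\sigma_n\lesssim |t-t_0|^{3\alpha-2},
\]
which is integrable uniformly in $n$ because $\alpha>\frac13$. Together these give $\limsup_n\cJ(u^n)<\infty$.

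The main obstacle is exactly this balancing. The crude bound $\frac{m\rho_\sigma}{u}\le m\rho_\sigma$ on the transport term would force $\alpha>\frac12$, while the entropy dissipation forces $\alpha<\frac12$. The point is to use the saturation $\frac{m\rho_\sigma}{u}\le1$ on the support, which gains the extra factor $\sigma$. One must also check that the regularisation $\sigma_n$ does not destroy the bounds near $t_0$. On the plateau $\{\sigma_n=\frac1n\}$ we have $\sigma_n'=0$, so only the dissipation term is present there, and it is bounded by $Cn^{2-\theta}$ on a time interval of length $\sim n^{-1/\alpha}$. Its contribution $n^{2-\theta-1/\alpha}$ therefore vanishes since $\alpha<\frac12$.
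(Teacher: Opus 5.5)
Your proof is correct and is essentially the paper's argument. It uses the same profile $1+\rho_{\sigma_n(t)}$ with a plateau regularisation near $t_0$, and the same admissible control $\frac12\partial_x\log u^n$ plus the dilation velocity weighted by $m\rho_{\sigma_n}/u^n$. As in the paper, Lemma~\ref{lem:rho-disipation-est} handles the first term and the saturation bound $(\sigma_n')^2\sigma_n$ handles the second. The only difference is that the paper takes $\alpha=\frac12$, via $r_{n,t}=\sigma_0 n^{-1}\psi(n|t-t_0|^{1/2})$, where you take $\alpha\in(\frac13,\frac12)$.

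Your closing heuristic is numerically off, though this does not affect the proof. Lemma~\ref{lem:rho-disipation-est} with $\theta$ close to $1$ allows any $\alpha<1$, so $\alpha=\frac12$ is fine. The crude bound $m\rho_\sigma/u\le m\rho_\sigma$ gives $(\sigma')^2/\sigma\sim|t-t_0|^{\alpha-2}$, which would require $\alpha>1$ rather than $\alpha>\frac12$.
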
 \begin{proof}
	Fix $t_0\in [0,T], x_0\in \TT$, and, for $\rho_\sigma, \psi$ given by \eqref{eq:rho-sigma}, \eqref{eq:psi construct}, define \begin{equation}
		\label{eq: def un d1} u^n(t,x):=1+\rho_{r_{n,t}}(x-x_0); \qquad r_{n,t}:=\sigma_0 n^{-1}\psi(n|t-t_0|^{1/2})
	\end{equation} where $\sigma_0<1/(2\sqrt{T})$, ensuring that $r_{n,t}<\frac12$ for all $n,t$. Thanks to the smoothness of $\rho, \psi$, each $u^n\in \cR$. Defining further \begin{equation}
		\xi_t(dx)=\begin{cases}
			(1+\rho_{\sigma_0|t-t_0|^{1/2}}(x-x_0))dx; & t\neq t_0; \\ dx+\delta_{x_0}(dx), &t=t_0
		\end{cases}
	\end{equation} it holds that $\xi\in C([0,T],\cM_+(\TTd))$, and that $\xi_{t_0}$ is singular with respect to Lebesgue measure. To see the convergence $u^n\to \xi$, we note that $t\neq t_0$, $u^n(t,x)dx=\xi_t(dx)$ as soon as $n>2|t-t_0|^{-1/2}$, which implies $$d(u^n,\xi)\le \frac8{n^2}.$$ It remains to estimate the dynamic cost. By direct computation, $u^n$ satisfies the skeleton equation \eqref{eq: sk} for the control \begin{equation}
		g^n(t,x):=\frac{\nabla \rho_{r_{n,t}}(x-x_0)}{2(1+\rho_{r_{n,t}}(x-x_0))}+\frac{\rho_{r_{n,t}}}{1+\rho_{r_{n,t}}}\frac{r'_{n,t}}{r_{n,t}}(x-x_0)1(|x-x_0|\le r_{n,t})
	\end{equation} understood as a function of $x\in \TT$ by covering the support of $\rho_{r_{n,t}}(x-x_0)$ by a single coordinate chart. For fixed time, the first term is smaller than the $L^2_x$-norm estimated by Lemma \ref{lem:rho-disipation-est} as \begin{equation}
		\left\|\frac{\nabla \rho_{r_{n,t}}(\cdot-x_0)}{1+\rho_{r_{n,t}}(\cdot-x_0)}\right\|_{L^2_x}^2 \le C_\theta r_{n,t}^{\theta-2}
	\end{equation} for any $\theta>0$, while the squared $L^2_x$-norm of the second term is $\sim (r'_{n,t})^2r_{n,t}$, and all together \begin{equation}
		\int_{\TT}|g^n(t,x)|^2dx \le C(r_{n,t}^{\theta-2}+(r'_{n,t})^2r_{n,t}).
	\end{equation} The first term is integrable in $t$, uniformly in $n$, for any $\theta>0$. In the second term, we have the uniform-in-$n$ comparison \begin{equation}
		|r'_{n,t}|\le C|t-t_0|^{-1/2}; \qquad |r_{n,t}|1(r'_{n,t}\neq 0)\le C|t-t_0|^{1/2}
	\end{equation} which implies that $(r_{n,t}')^2r_{n,t} \le C|t-t_0|^{-1/2}$ is also integrable. All together, we find that \begin{equation}
		\limsup_n \cJ(u^n)\le \limsup_n \|g^n\|_{L^2_{t,x}}^2 <\infty
	\end{equation} and the proof is complete.
\end{proof}
\subsection{Dimension $d=2$: Countably Many Jumps}

In this subsection we prove 

\begin{proposition}
	\label{thm: prescribed jumps d2}
Let \((t_k)_{k\ge1}\subset(0,T)\) be a sequence of distinct times and let $\epsilon_k, k\ge 0$ satisfy
\begin{equation} \label{eq: jump summability}
        \sum_{k=1}^\infty\epsilon_k^{\gamma}<\infty \qquad \text{ for some }0<\gamma<1.
\end{equation}
Then there exist $u^n\in \cR$, with $\sup_{n\ge1}\mathcal J(u^n)<\infty$ and satisfying $u^n_0\ge 1$, converging to a limit $\Xi\in \cX$ whose unique c{\`a}dl{\`a}g representative $\xi$ satisfies
\[
        \|\xi_{t_k}-\xi_{t_k-}\|_{\rm TV}=2\varepsilon_k,
        \qquad k\ge1 .
\]
\end{proposition}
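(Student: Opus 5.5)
We construct a single elementary building block: a path which, over a short time window, moves mass $\epsilon$ from one location to a macroscopically separated one at bounded cost. We then superpose countably many such blocks, one per jump time $t_k$, with disjoint time windows shrinking in $n$.

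\textbf{Building block.} Fix $k$ and two points $x_k\ne y_k$ in $\TT^2$. Take a concentration scale $\sigma=\sigma_{k,n}\to 0$ and a transport window $I_{k,n}=[t_k-\tau_{k,n},t_k]$ with $\tau_{k,n}\to0$. Define $u^{n,k}$ in three phases, all on top of the background $1$:
\begin{enumerate}
\item For $t\le t_k-\tau_{k,n}$, a bump $\epsilon_k\rho_{r(t)}(\cdot-x_k)$. Its radius $r(t)$ decreases from order $1$ to $\sigma$, smoothly on a time scale independent of $n$, much as in the $d=1$ construction.
\item On $I_{k,n}$, a translate $\epsilon_k\rho_\sigma(\cdot-\gamma(t))$, where $\gamma$ moves linearly from $x_k$ to $y_k$.
\item After $t_k$, re-spread at $y_k$ by the time-reverse of phase 1.
\end{enumerate}
In the limit $n\to\infty$, phase 1 converges on $[0,t_k)$ to a path collapsing to $\epsilon_k\delta_{x_k}$ as $t\uparrow t_k$. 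Phase 3 converges to a path starting from $\epsilon_k\delta_{y_k}$ at $t=t_k$. Hence the c\`adl\`ag limit has $\xi_{t_k-}-\xi_{t_k}=\epsilon_k(\delta_{x_k}-\delta_{y_k})$, whose total variation norm is $2\epsilon_k$. Convergence in $\cX$ holds because $|I_{k,n}|\to 0$ and the masses are bounded.

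\textbf{Cost estimates.} A valid control is the sum of two terms:
\begin{itemize}
\item a diffusive term $\frac{\nabla u}{2u}$;
\item a transport term $\frac{\epsilon_k\rho_\sigma}{1+\epsilon_k\rho_\sigma}\,v$, where $v=\dot\gamma$ during translation, or the radial velocity $\frac{r'}{r}(x-x_0)$ during dilation.
\end{itemize}
The time derivative of $u$ equals $-\nabla\cdot(\epsilon_k\rho_\sigma v)$, and writing it this way is exact.

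By Lemma~\ref{lem:rho-disipation-est} with $d=2$, the diffusive part costs at most $C\epsilon_k^{1-\theta}r^{2\theta-2}$ per unit time. During the translation this is $C\epsilon_k^{1-\theta}\sigma^{2\theta-2}\tau$. In phases 1 and 3 we choose $r(t)$ comparable to $|t-t_k|^{1/2}$ (cut off at $\sigma$), so $\int r^{2\theta-2}\,dt\lesssim\int|t-t_k|^{\theta-1}\,dt<\infty$, uniformly in $n$.

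The transport term during translation has squared $L^2_x$ norm at most $|\dot\gamma|^2\int \frac{(\epsilon_k\rho_\sigma)^2}{(1+\epsilon_k\rho_\sigma)^2}$. This is bounded by $\tau^{-2}\min(\epsilon_k^2\sigma^{-2},\sigma^2)\lesssim \tau^{-2}\epsilon_k$, using $\frac{z^2}{(1+z)^2}\le z$. Integrated over $I_{k,n}$ it gives $C\epsilon_k\tau^{-1}$.

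This is the main obstacle: the transport cost blows up as $\tau\to0$, so a genuine jump seems costly. The remedy is the 2D saturation: on the set where $\epsilon_k\rho_\sigma\gg1$, the factor $\frac{z}{1+z}$ is $\approx1$, so the cost is really $\tau^{-1}|\{\epsilon_k\rho_\sigma\gtrsim1\}|\approx\tau^{-1}\sigma^2$, up to $\epsilon_k$-dependent constants. Choosing $\sigma^2\ll\tau\to0$ makes the transport cost vanish, while the diffusive cost $\epsilon_k^{1-\theta}\sigma^{2\theta-2}\tau$ requires $\tau\ll\sigma^{2-2\theta}$. Both constraints are compatible since $\sigma^2\ll\sigma^{2-2\theta}$. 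The dilation transport cost is handled as in $d=1$: it is about $\int (r')^2\min(\epsilon_k,r^2)\,dt\lesssim\epsilon_k^{1-\theta'}\int|t-t_k|^{-1}r^{2\theta'}\,dt<\infty$, by interpolation.

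\textbf{Summation.} Using $\frac{z}{1+z}\le z^{1-\theta}$ throughout, each block $k$ costs at most $C_\theta\epsilon_k^{1-\theta}$. Choose $\theta=1-\gamma$, so that $\sum_k\epsilon_k^{\gamma}<\infty$ gives $\sup_n\mathcal J(u^n)<\infty$. Two further points need checking:
\begin{itemize}
\item \emph{Overlapping supports.} Choose the radii of the phase-1 and phase-3 bumps so that blocks with overlapping supports contribute subadditively to the cost. Where supports overlap, the Cauchy--Schwarz form of the cost, $\|\text{flux}\|^2/u$, is convex, so adding the fluxes and dividing by the total $u\ge1$ only helps.
\item \emph{Truncation.} Work with finitely many $k\le n$ for $u^n$, so that $u^n\in\cR$, and set $u^n_0\ge1$.
\end{itemize}
The limit is then c\`adl\`ag with the prescribed jumps, and the remaining step is the convergence of $u^n$ in $\cX$.
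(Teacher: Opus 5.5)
Your proposal is correct and follows the paper's construction essentially step for step: a spike collapsing at rate $|t-t^*|^{1/2}$, translated at fixed radius $\sigma$ over a window of length $\tau$ with $\sigma^2\ll\tau\ll\sigma^{2-2\theta}$ (these are the paper's $\tau^2/\lambda$ and $\lambda\tau^{2\theta-2}$ terms), then re-spread, with the blocks superposed via the convexity bound $|g|^2\le\sum_k\frac{v^k}{1+v^k}|h^k|^2$ and $\theta\le 1-\gamma$. Two small fixes are needed. First, the phase-1 collapse must be centred at the start of the translation window, e.g.\ $r(t)=\sigma\psi\bigl((t_k-\tau-t)^{1/2}/\sigma\bigr)$, rather than at $t_k$, since otherwise $r(t_k-\tau)\sim\tau^{1/2}\gg\sigma$. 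Second, the translation should follow a smooth path with all derivatives vanishing at the endpoints instead of linear motion, so that $u^n$ is genuinely $C^\infty$ in time and lies in $\cR$.
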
 

\begin{proof} We proceed by steps. In a first step, we construct a basic building block of a rapidly translated spike, with a modified cost estimate. In subsequent steps we construct a sequence of trajectories $\bar{u}^n$, each with only finitely many jumps and whose limit has the desired properties, and then approximate each $\bar{u}^n$ with smooth trajectories. 

\step{1. Definition of a Rapidly Translated Spike} We first define a three-parameter family of trajectories. As well as the functions $\rho(x,\sigma)=\rho_\sigma(x)$ introduced in \eqref{eq:rho-sigma}, let $\psi$ be as given in \eqref{eq:psi construct}, and for $a,b\in \TT^2$, let $\varphi(t,a,b)$ be a smooth path, starting at $\varphi(0,a,b)=a$ and ending at $b$, with all derivatives vanishing at the endpoints $t=0,1$. We now set\begin{equation}
	\label{eq: def moving bump} v_t(x; \epsilon,t_0,\tau,\lambda,a,b):=\epsilon \rho_{r_t}(x-s_t) \end{equation} where the radius $r_t$ and centre $s_t$ of the bump are given by \begin{equation}\label{eq: rt} r_t:=\begin{cases}
		\sigma_0\tau \psi((t_0-\lambda-t)^{1/2})/\tau), & t< t_0-\lambda; \\ \sigma_0\tau, & t_0-\lambda\le t< t_0; \\ \sigma_0\tau\psi((t-t_0)^{1/2}/\tau), & t\ge t_0
	\end{cases}
\end{equation} where $\sigma_0<1/(2\sqrt{T})$, and \begin{equation} s_t:=\begin{cases} a, & t< t_0-\lambda; \\ \varphi((t-t_0+\lambda)/\lambda, a, b), & t_0-\lambda\le t< t_0; \\ b & t\ge t_0.\end{cases}\end{equation} We write also $\bar{v}_t=\bar{v}_t(dx;\epsilon, t_0,a,b)$ for the discontinuous but c{\`a}dl{\`a}g path of measures obtained in the limit, relative to the topology of $\cX$, as $\tau,\lambda\downarrow 0$: \begin{equation}\label{eq: barv}
	\bar{v}_t(dx):=\begin{cases} \epsilon \rho_{\sigma_0(t_0-t)^{1/2}}(x-a)dx, & t<t_0; \\ \epsilon \delta_b(dx), & t=t_0\\  \epsilon \rho_{\sigma_0(t-t_0)^{1/2}}(x-b)dx, &t> t_0. \end{cases}
\end{equation} We note that each such $\bar{v}_t$ is c{\`a}dl{\`a}g, and continuous except at $t_0$, where $\bar{v}_{t_0-}=\epsilon \delta_a$. \\\\ \step{2. Inhomogeneous Cost Estimate} By construction, each such $v_t(x)$ is smooth, and we now exhibit a vector field $h$ such that $v$ solves the skeleton equation \eqref{eq: sk} for $h$, with an estimate on the weighted $L^2_{t,x}$-norm, for any $c> 0$ and $\theta\in (0,1)$, \begin{equation}
	\label{eq: inhomogenous cost} \int_0^T\int_{\TT^2} \frac{v}{v+c}|h(t,x)|^2 dx dt \le C\left[\epsilon^{1-\theta}\left(1+\frac{\lambda}{\tau^{2-2\theta}}\right) +\epsilon(1+|\log \epsilon|)+\frac{\tau^2}{\lambda}\right]
\end{equation}	for some constant $C$ depending on $c, \theta$ and the functions $\psi, \varphi$ and their derivatives, but not on $\epsilon, \tau,\sigma$ or the coordinates $t_0, a, b$. For the remainder of this step, we write $C$ for such a constant, which is allowed to change from line to line. First, we observe that $v_t$ satisfies the advection equation \begin{equation}
	\label{eq: adv} \partial_tv = -\nabla\cdot(v_t b_t)
\end{equation} for the drift field given by \begin{equation}
	b_t(x):=\left[\frac{r'_t}{r_t}(x-s_t)+s'_t\right]1(|x-s_t|\le r_t)
\end{equation} understood as a function of $x\in \TT^2$ by covering the support of $v_t$ by a single coordinate chart. We remark that, although $b_t$ is not continuous at the boundary of its support, the product $v_tb_t$ is smooth, thanks to the support condition on $\rho$.  We define the candidate $h$ on the interior of the support of $v$ by \begin{equation}
	\label{eq: def candidate h} h(t,x):=\frac12\nabla \log v_t(x)+b_t(x)
\end{equation} extending $h$ by zero outside of this support. We therefore estimate the norm appearing in \eqref{eq: inhomogenous cost} by \begin{equation}\begin{split}
	\int_0^T \int_{\TT^2}\frac{v(t,x)}{v(t,x)+c}|h(t,x)|^2 dx dt &\le \frac12 \int_0^T \int_{\TT^2}\frac{v(t,x)}{v(t,x)+c}|\nabla \log v(t,x)|^2 dx dt \\[0.5ex] & +2\int_0^T \int_{\TT^2}\frac{v(t,x)}{v(t,x)+c}|b(t,x)|^2 dx dt. \end{split}
\end{equation} The first term is estimated by Lemma \ref{lem:rho-disipation-est}, and produces, for any $\theta\in(0,1)$, \begin{equation}
	\int_{\TT^2} \frac{v(t,x)}{v(t,x)+c}|\nabla \log v(t,x)|^2 dx  \le C\epsilon^{1-\theta}r_t^{2\theta-2}.\end{equation} Integrating in time, and accounting for the definitions of $r_t$ on the different intervals gives \begin{equation} \begin{split}\label{eq: first term of inhom cost}  \int_0^T \int_{\TT^2}\frac{v(t,x)}{c+v(t,x)}|\nabla \log v(t,x)|^2 dx dt &\le {C\epsilon^{1-\theta}}\left(\lambda \tau^{2\theta-2}+2\int_0^Tt^{\theta-1}dt\right) \\& \le {C\epsilon^{1-\theta}}\left(1+\lambda\tau^{2\theta-2}\right).\end{split}\end{equation} Meanwhile, in the second term, squaring and integrating over the domain $|x-s_t|\le r_t$ produces \begin{equation} \label{eq: integration of b}
\begin{split}
	\int_{\TT^2}\frac{v}{v+c}|b_t|^2dx
	&=
	r_t^2
	\int_{B(0,1)}
	\frac{\epsilon r_t^{-2}\rho(y)}
	     {\epsilon r_t^{-2}\rho(y)+c}
	|r_t'y+s_t'|^2dy                                      \\
	&\le C\left[
	(r_t')^2 r_t^2
	\int_{B(0,1)}
	\frac{\epsilon \rho(y)}
	     {\epsilon \rho(y)+c r_t^2}
	|y|^2dy
	+
	|s_t'|^2 r_t^2
	\int_{B(0,1)}
	\frac{\epsilon \rho(y)}
	     {\epsilon \rho(y)+c r_t^2}
	dy \right].
\end{split}
\end{equation} Splitting the integral, depending on whether or not $\epsilon \rho(y)/(cr_t^2)>1 $, we obtain \begin{equation}
\label{eq:weighted-drift-pointwise}
	\int_{\TT^2}\frac{v}{v+c}|b_t|^2dx
	\le 
	C|s_t'|^2\min\left\{r_t^2,\frac{\epsilon}{c}\right\}+ C(r_t')^2\min\left\{r_t^2,\frac{\epsilon}{c}\right\}.
\end{equation}We now integrate in time and estimate the two contributions separately. The first term vanishes except in the interval $[t_0-\lambda,t_0]$, while on this interval
$r_t=\tau$ and $|s_t'|\le C \lambda^{-1}$, giving the contribution
\begin{equation}
\begin{split}
	\label{eq:translation-drift-cost}
	\int_0^T |s_t'|^2\min\left\{r_t^2,\frac\epsilon c\right\}\,dt
		&\le 
	\frac{C\tau^2}\lambda.
\end{split}
\end{equation}
Meanwhile, the second term of \eqref{eq:weighted-drift-pointwise} vanishes on
$[t_0-\lambda,t_0]$ and only the two outer intervals matter. We deal with $t\le t_0-\lambda$; the interval $t\ge t_0$ is identical. Using the definition \eqref{eq: rt}, we have the comparisons \begin{equation}
	\label{eq: comparisons for rt} |r'_t|\le C(t_0-\lambda-t)^{-1/2}; \qquad |r_t|1(r'_t\neq 0)\le C(t_0-\lambda-t)^{1/2}
\end{equation} for some $C$ depending on $\psi$. Substituting these bounds into the term of \eqref{eq:weighted-drift-pointwise} involving $r'_t$ and splitting the integral, the contribution from the time interval considered is at most \begin{equation} \label{eq: shrink cost}
	\begin{split}   \int_0^{t_0-\lambda} (r'_t)^2\min\left\{r_t^2, \frac \epsilon c \right\} dt  &\le C\int_0^{t_0}u^{-1}\min\left\{u, \frac{\epsilon}{c}\right\} du  \\ & \hspace{1cm}\le C\epsilon(1+|\log \epsilon|). \end{split} 
\end{equation}  The contribution to  $\int_{t,x} \frac{v}{v+c}|b|^2$ from the time interval $[t_0,T]$ is identical. Gathering (\ref{eq:weighted-drift-pointwise}, \ref{eq:translation-drift-cost}, \ref{eq: shrink cost}), we conclude that
\begin{equation}
	\int_0^T\int_{\TT^2}
		\frac{v}{v+c}|b_t|^2\,dxdt
	\le C\left[
	\epsilon(1+|\log \epsilon|)
	+
	\frac{\tau^2}{\lambda}\right].
\end{equation} Together with \eqref{eq: first term of inhom cost}, we have proven the claim \eqref{eq: inhomogenous cost}. \\\\ \step{3. Construction of Jump Trajectory \& Smooth Approximations.} \\ We are  now ready to give the construction of a path $u$ with countably many jumps, as well as smooth approximations. For $\gamma$ as in the hypothesis \eqref{eq: jump summability}, choose $0<\theta\le 1-\gamma$, so that $1-\theta\ge \gamma$. Recalling the notation $\bar{v}_t$ from step 1, we choose any sequences of distinct points $a_k\neq b_k$, and set \begin{equation}
	\xi_t(dx):=dx+\sum_{k\ge 1}\bar{v}_t(dx;\epsilon_k,t_k,a_k, b_k)
\end{equation} as well as \begin{equation}
	\bar{u}^n_t(dx):=dx+\sum_{k=1}^n \bar{v}_t(dx;\epsilon_k,t_k,a_k,b_k);
\end{equation} \begin{equation} \label{eq: construct smooth paths}
	{u}^{n,m}_t(x):=1+\sum_{k=1}^n {v}_t(x;\epsilon_k,t_k,\tau_{k,m}, \lambda_{k,m},a_k,b_k)
\end{equation} taking the parameters to be\begin{equation}
	\label{eq: clever param choices} \tau_{k,m}:=\frac{1}{m}; \qquad \lambda_{k,m}:=\frac{\epsilon_k^{(\theta-1)/2}}{m^{2-\theta}}.
\end{equation} To shorten notation, we write $v^{k,m}$ for the summand in \eqref{eq: construct smooth paths}, and let $\Xi(dt,dx)=\xi_t(dx)dt$. Each $u^{n,m}\in \cR$ has the desired regularity, and from the remark at the end of step 1, $$d(u^{n,m},\bar{u}^n)\to 0$$ as $m\to \infty$ for $n$ fixed, while the summability of $\epsilon_k$ ensures that $u$ is well-defined, and that $$\sup_t W(\bar{u}^n_t,\xi_t)\to 0,$$ which further implies that $d(\bar{u}^n, \Xi)\to 0$. Each $\bar{u}^n$ is c{\`a}dl{\`a}g, and hence so is $\xi$, by uniform convergence. For each $k$ and each $n\ge k$, every term with $r\neq k$ in the sum defining $\bar{u}^n$ is continuous at $t_k\neq t_r$, so $ \bar{u}^n_{t_k}-\bar{u}^n_{t_k-}=\epsilon_k(\delta_{b_k}-\delta_{a_k})$, and using the uniformity of convergence again to send $n\to \infty$, the same is true for $\xi_{t_k}-\xi_{t_k-}$. \\\\ It remains to show that $\limsup_n \limsup_m \cJ(u^{n,m})<\infty$ remains finite in the double-limit where $m\to \infty$ first with $n$ fixed, and then $n\to \infty$. With this estimate in hand, the claim follows by taking $u^n:=u^{n,m_n}$ for some suitably fast-growing $m_n\to \infty$. For the estimate of the dynamic cost, we observe that a (not necessarily optimal) $g$ for which $u^{n,m}$ solves \eqref{eq: sk} is given by \begin{equation}
	g^{n,m}:=\frac{\sum_{k=1}^n v^{k,m}h^{k,m}}{1+\sum_{k=1}^n v^{k,m}}
\end{equation} where $h^{k,m}$ are the controls given in Step 2 for each moving bump $v^{k,m}$, with the parameter choices \eqref{eq: clever param choices}. Since each $v^{k,m}\ge 0$, convexity gives the pointwise inequality \begin{equation}
	|g^{n,m}|^2\le \sum_{k=1}^n \frac{v^{k,m}}{1+\sum_{p=1}^n v^{p,m}}|h^{k,m}|^2 \le \sum_{k=1}^n\frac{v^{k,m}}{1+v^{k,m}}|h^{k,m}|^2.
\end{equation}The space-time integral of each term on the right-hand side is estimated by \eqref{eq: inhomogenous cost}, and substituting the parameter choices \eqref{eq: clever param choices} produces \begin{equation}\begin{split} \label{eq: cost asymptotic}
	&\int_0^T\int_{\TT^2} \frac{v^{k,m}}{1+v^{k,m}}|h^{k,m}|^2 dt dx \\& \hspace{1cm}\le C\left[\epsilon_k^{1-\theta}(1+\epsilon_k^{(\theta-1)/2}m^{-\theta})+\epsilon_k(1+|\log \epsilon_k|)+\epsilon_k^{(1-\theta)/2}m^{-\theta}\right] \\ & \hspace{1cm}\le C(\epsilon_k^{1-\theta}+\epsilon_k(1+|\log \epsilon_k|))+Cm^{-\theta}\epsilon_k^{(1-\theta)/2}.\end{split}
\end{equation} Since $1-\theta\ge \gamma$ and $\sum \epsilon_k^\gamma<\infty$ by hypothesis, the first two terms are summable in $k$, uniformly in $n,m$, and we find \begin{equation}
	\cJ(u^{n,m})\le \|g^{n,m}\|_{L^2_{t,x}}^2 \le C\sum_{k=1}^\infty(\epsilon_k^{1-\theta}+\epsilon_k(1+|\log \epsilon_k|))+Cm^{-\theta}\sum_{k=1}^n \epsilon_k^{(1-\theta)/2}.
\end{equation} For each fixed $n$, we may thus choose $m=m_n$ large enough that the contribution of the last term is at most $n^{-1}$ to conclude that \begin{equation}
	\limsup_{n\to\infty}\limsup_{m\to \infty}\cJ(u^{n,m})<\infty
\end{equation} and the proof is complete.\end{proof}

\subsection{Dimension $d\ge3$: Arbitrary Trajectories with Vanishing Dynamical Cost}

In this section we prove \begin{proposition}\label{thm:dge3-arbitrary-relaxed-profiles}
Let $d\ge3$ and fix $c,M\in(0,\infty)$. Let $\Xi\in \cX_M$, and suppose that
\[
\Xi(\dd t,\dd x)\ge c\,\dd t\,\dd x.
\] Then there exists a sequence $u^n\in \cR, u^n\to \Xi$, such that \begin{equation}
	\label{eq: J is awful in d>=3} \limsup_n \cJ(u^n)=0;\qquad u^n_0(x)\ge c.
\end{equation}
\end{proposition}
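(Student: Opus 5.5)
The plan is to realise $\Xi$ as a constant background $c\,dx$ plus a large number of tiny, highly concentrated spikes. The spikes sit still for most of the time and occasionally jump quickly to new positions. This builds on the moving-bump construction of Step~1 in the proof of Proposition~\ref{thm: prescribed jumps d2}, but now with a fixed radius: there is no shrinking phase, only a translation phase. The background $c\,dx$ is stationary for \eqref{eq: sk} with $g=0$, so it costs nothing. The whole cost comes from the spikes, and it will be made to vanish by sending the number of spikes to infinity while the radius shrinks polynomially faster than the mass per spike. This is possible only when $d\ge 3$.

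\textbf{Step 1 (discretisation of $\Xi$).} Write $\mu:=\Xi-c\,dt\,dx\ge 0$. This has time marginal $m\,dt$ with $m:=e(\Xi)-c\le M$. Fix a time step $h=T/L$ and let $\mu^{(\ell)}$ be the time average of the disintegration $\mu_t$ over the block $[(\ell-1)h,\ell h)$. Each $\mu^{(\ell)}$ is a measure of mass $m$ on $\TTd$. Pairing with a continuous $\phi(t,x)$ and using its uniform continuity in $t$ shows that the block-averaged path converges to $\Xi$ in $\cX$ as $h\to 0$. Next I quantise: choose $K$ points $s^{j,\ell}$ such that $\frac mK\sum_j\delta_{s^{j,\ell}}$ is within $W$-distance $o(1)$ of $\mu^{(\ell)}$ as $K\to\infty$, uniformly in $\ell$ for fixed $L$. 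This is standard empirical quantisation on the compact torus.

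\textbf{Step 2 (smooth paths).} Set $\epsilon:=m/K$, $\sigma:=\epsilon^\beta$ with $\beta$ large, and choose a transition time $\lambda\ll h$. Then define
\[
u^n(t,x):=c+\sum_{j=1}^K \epsilon\,\rho_\sigma(x-s^j_t).
\]
Here $s^j_t$ equals $s^{j,\ell}$ on the block $\ell$, except during a window of length $\lambda$ at the start of each block. In that window it moves smoothly to $s^{j,\ell}$ along a geodesic segment, reparametrised by $\varphi$ as in Step~1, so that all derivatives vanish at the endpoints. Each $u^n$ lies in $\cR$ and satisfies $u^n\ge c$, in particular $u^n_0\ge c$. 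As $\sigma\to 0$ and $\lambda/h\to0$, it converges to the discretised path, and hence by a diagonal argument to $\Xi$.

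\textbf{Step 3 (cost).} As in Step~3 of the proof of Proposition~\ref{thm: prescribed jumps d2}, I take the control $g=\sum_j v^j h^j/(c+\sum_j v^j)$. Here $v^j=\epsilon\rho_\sigma(\cdot-s^j_t)$ and $h^j=\frac12\nabla\log v^j+s^{j\prime}_t 1(|x-s^j_t|\le\sigma)$. The same convexity argument gives $|g|^2\le\sum_j \frac{v^j}{c+v^j}|h^j|^2$, so overlaps between spikes do no harm. Lemma~\ref{lem:rho-disipation-est} (with the constant $c$ absorbed by rescaling) bounds the dissipation part by
\[
C T K\epsilon^{1-\theta}\sigma^{d\theta-2}=CTm\,\epsilon^{-\theta+\beta(d\theta-2)}.
\]
Since $d\ge3$, I can choose $\theta\in(2/d,1)$ and then $\beta>\theta/(d\theta-2)$, which makes this term tend to $0$. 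By \eqref{eq:weighted-drift-pointwise}, the translation part is at most
\[
C\,K\cdot L\cdot\lambda\cdot\lambda^{-2}\sigma^2=C\,m\,T\,\epsilon^{2\beta-1}/(h\lambda).
\]
This tends to $0$ once $\beta>1/2$ and $\epsilon\to0$ fast enough relative to $h,\lambda$. Choosing $h\to0$, then $\lambda=o(h)$, then $K\to\infty$ along a diagonal sequence gives $\cJ(u^n)\le\|g\|^2_{L^2_{t,x}}\to0$.

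\textbf{Main obstacle.} I expect the delicate point to be the bookkeeping in the diagonal choice, so that convergence in $\cX$ and the cost bound hold simultaneously. In particular, the quantisation of Step~1 must be uniform over the $L$ blocks, and the transition windows must carry negligible mass-time. The cost computation itself is a direct transfer of Step~2 of the proof of Proposition~\ref{thm: prescribed jumps d2}, with $d\ge3$ entering only through the requirement $d\theta>2$ with $\theta<1$.
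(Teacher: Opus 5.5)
Your proof is correct, and its overall plan matches the paper's. You block-average in time, quantise $\Xi-c\,dt\,dx$ into equal point masses, smooth them with $\rho_\sigma$, and insert short transition layers. The dissipation part of the control is then bounded by Lemma~\ref{lem:rho-disipation-est} with $\theta\in(2/d,1)$, which is exactly where $d\ge3$ enters.

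\textbf{Where you differ: the transition layers.}

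\emph{Paper.} It uses $g=\frac12\nabla\log u+b_t$, where $b_t$ is a divergence-free field that equals the spike velocity near each spike and is supported in $\sigma$-balls. This forces distinct points, disjoint supports, and collision-free paths $y^k_\ell$ separated by more than $2\sigma$. It also needs an elliptic construction of $b_t$, and the resulting constant $C_L$ depends on the chosen paths.

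\emph{Yours.} You reuse the superposition control $g=\sum_j v^jh^j/(c+\sum_j v^j)$ from the proof of Proposition~\ref{thm: prescribed jumps d2}. The transport part of $ug$ is then exactly $\sum_j v^j s^{j\prime}_t$, and its divergence is $-\partial_t u$ directly, so no solenoidal extension is needed. Cauchy--Schwarz gives $|g|^2\le\sum_j\frac{v^j}{c+v^j}|h^j|^2$ even when spikes overlap. Hence you may move spikes along straight torus segments with the uniform speed bound $|s^{j\prime}_t|\le C/\lambda$, independent of the configuration. This is more elementary and treats $d=2$ and $d\ge3$ by a single mechanism.

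\textbf{Two minor remarks.}

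First, \eqref{eq:weighted-drift-pointwise} was derived in $d=2$. In $d\ge3$ the same computation gives the factor $\min\{C\sigma^d,\epsilon/c\}$ instead. Since $\sigma^d\le\sigma^2$, your bound still holds.

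Second, the coupling $\sigma=\epsilon^\beta$ is unnecessary. For a fixed number $K$ of spikes and fixed $h,\lambda$, your two cost terms are $CK^{\theta}\sigma^{d\theta-2}$ and $CK(T/h)\sigma^d/\lambda$. Both vanish as $\sigma\to0$ alone. This is how the paper concludes: it fixes the number of spikes and the time scales, and sends $\sigma\to0$ last. So $K\to\infty$ is needed only for the quantisation, not for the cost.
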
 
\begin{proof}

Fix $\Xi$. We first define a four-parameter family $\Xi^{\tau,L,\sigma,\tau'}=u^{\tau,L,\sigma,\tau'}dtdx$ through successive approximation steps, and then show that we recover convergence to $\Xi$ in any scaling regime $\tau\to 0, \tau'\to 0, L\to \infty, \sigma\to 0$ provided that $\tau'\ll \tau$. Next, we derive an estimate for $\cJ(u^{\tau,L,\sigma,\tau'})\to 0$ as $\sigma\to 0$ with the other parameters fixed. The conclusion then follows by a diagonal argument. \\ \step{1. Approximation by Relaxed Paths with Finitely Many Values.} We first approximate with a relaxed measure $\Xi^\tau$ which is constant in time. For $\tau>0$, partition the time interval $[0,T]$ by $I_k=[(k-1)\tau,k\tau), 0\le k\le K_\tau=\lceil T/\tau\rceil$, and define \[
\bar\Xi^\tau_{I_k}:=\frac1{|I_k\cap[0,T]|}\int_{I_k\cap[0,T]}\bar\Xi_s\,\dd s,
\]
we obtain a piecewise constant measure
\begin{equation}
\label{eq:pw-constant-relaxed-measure}
\Xi^\tau:=\sum_{k=1}^{K_\tau}\bar\Xi^\tau_{I_k}(\dd x)1(t\in I_k)\,\dd t
\end{equation}
which converges weakly to $\Xi$ as $\tau\downarrow0$. \\ \\ \step{2. Approximation by Regularised Dirac Masses.} Letting $m\ge 0$ be the (constant) mass of $\Xi-c dtdx$, and hence of $\bar\Xi^\tau$, we choose distinct $x^k_\ell$ to attain \begin{equation}\label{eq:approximate-by-point-mass} \begin{split}W\left(\frac{m}{L}\sum_{\ell=1}^{L}\delta_{x^k_\ell},\bar{\Xi}^\tau_{I_k}-cdx\right)& \le \inf_{y^k_\ell \in \TTd} W\left(\frac{m}{L}\sum_{\ell=1}^{L}\delta_{x^k_\ell},\bar{\Xi}^\tau_{I_k}-cdx\right) + L^{-1} \\[1ex]& \le m\delta_L +L^{-1} \end{split}\end{equation} for some sequence $\delta_L\to 0$ as $L\to \infty$. Next, for $\sigma>0$ sufficiently small, consider the smoothed profile
\[
u^{\tau,L,\sigma}_{I_k}:= c+ \frac{m}L\sum_{\ell=1}^L \rho_\sigma(x-x^k_\ell)
\]
where $\rho_\sigma$ is the smoothed point mass \eqref{eq:rho-sigma}, and let $\Xi^{\tau,L,\sigma}$ be the associated relaxed measure, replacing $\bar{\Xi}^\tau_{I_k}$ by $u^{\tau,L,\sigma}_{I_k}$ in \eqref{eq:pw-constant-relaxed-measure}. Provided that $\sigma$ is chosen smaller than $\sigma<\frac12\min_{k,\ell\neq\ell'}|x^k_\ell-x^k_{\ell'}|$, the supports of $\rho_\sigma(\cdot-x^k_l)$ are disjoint for all time, and by construction 
\begin{equation} \label{eq:approximate-by-smooth-mass}
	W\left(u^{\tau,L,\sigma}_{I_k},c+\frac m L \sum_{\ell=1}^L \delta_{x^k_\ell}\right) \le m\sigma.
\end{equation}
\step{3. Approximation by Paths Regular in Time.} Finally, we insert short transition layers.  For every $k=1,\dots, K_\tau-1$, construct a family $\{y^k_\ell(t): 1\le \ell \le L\} \subset C^\infty([0,1],\TTd)$, with all derivatives vanishing at the endpoints, and such that $$y^k_\ell(0)=x^k_\ell\qquad  y^k_\ell(1)=x^{k+1}_\ell $$ and such that $y^k_\ell(t)\neq y^k_{\ell'}(t)$ for any $t\in [0,1], \ell\neq \ell'$. We further restrict $\sigma<\frac12 \min_{k, \ell\neq \ell', t}|y^k_\ell(t)-y^k_{\ell'}(t)|$, and finally define \begin{equation}
	u^{\tau,L,\sigma,\tau'}_t:=\begin{cases} u^{\tau,L,\sigma}_{I_k}	, & t\in I_k \setminus (k\tau-\tau', k\tau]; \\ c+\frac m L \sum_{\ell=1}^L \rho_\sigma(x-y^k_\ell(\frac{t-k\tau+\tau'}{\tau'})), & t\in (k\tau-\tau', k\tau]
\end{cases} \end{equation} and $\Xi^{\tau,L,\sigma,\tau'}(dx,dt):=u^{\tau,L,\sigma,\tau'}_t(x) dx dt$.\\ \\ \step{4. Recovery of the Specified Path $\Xi$.} By construction, each $\Xi^{\tau,L,\sigma,\tau'} \in \cR$ and the density $u^{\tau,L,\sigma,\tau'}\ge c$ everywhere. In this and the subsequent step we verify that, provided the approximation parameters $\tau\to 0, \tau'\to 0, L\to \infty, \sigma\to 0$ in a suitable regime, we have the convergence \begin{equation}
	\label{eq: LSC envelope 1} \Xi^{\tau,L,\sigma,\tau'}\to \Xi
\end{equation} and \begin{equation}
	\label{eq: LSC envelope 2} \cJ(\Xi^{\tau,L,\sigma,\tau'})\to 0.
\end{equation}
First, let $f:[0,T]\times \TTd\to \RR$ be 1-bounded and 1-Lipschitz in both variables, and estimate the error in $\langle f, \Xi\rangle$ at each step of the approximation. Thanks to \eqref{eq:pw-constant-relaxed-measure}, we get $|\langle f, \Xi-\Xi^\tau\rangle| \le (c+m)\tau$, and thanks to (\ref{eq:approximate-by-point-mass}-\ref{eq:approximate-by-smooth-mass}), we obtain $$|\langle f, \Xi^\tau-\Xi^{\tau,L,\sigma}\rangle| \le m(\sigma+\delta_L)+L^{-1}. $$ Finally, the total measure of the times for which $\Xi^{\tau,L,\sigma}$ and $\Xi^{\tau,L,\sigma,\tau'}$ do not coincide is of the order $\tau'(1+(T/\tau))$, and the contribution from these regions is at most $2(m+c)\tau'(1+(T/\tau))$ thanks to the mass bound. Combining everything, and since $f$ was arbitrary, we obtain \begin{equation} \label{eq: final approximation}
	d(\Xi,\Xi^{\tau,L,\sigma,\tau'})\le (c+m)\tau+m(\sigma+\delta_L)+L^{-1}+2(m+c)\tau'\left(1+\frac{T}{\tau}\right) \to 0
\end{equation} provided that the time increments are chosen so that $\tau'\ll \tau$, and \eqref{eq: LSC envelope 1} is proven. \\\\ \step{5. Estimate of Dynamic Cost.} For the second claim \eqref{eq: LSC envelope 2}, we identify a candidate $g$, dividing the times between the intervals where $u^{\tau,L,\sigma,\tau'}$ is stationary, and where the masses are moved. For $t\in I_k\setminus (k\tau-\tau',k\tau]$, we take $$ g:=\frac12 \nabla \log u^{\tau,L,\sigma,\tau'}.$$ Since the supports of the smoothed masses are disjoint, we use Lemma \ref{lem:rho-disipation-est} to estimate, for any $\theta\in (2/d,1)$, \begin{equation}
	\label{eq: competitor g 1} \begin{split}\|g(t)\|^2_{L^2_x}&=\frac14\sum_{l=1}^L \int_{\TTd} \left|\nabla \log \left(c+\frac{m}{L}\rho_\sigma(x-x^k_\ell)\right)\right|^2dx\\ & \le \frac14\sum_{l=1}^L \int_{\TTd} \frac{(m/cL)\rho_\sigma(x-x^k_\ell)}{1+(m/cL)\rho_\sigma(x-x^k_\ell)}\left|\nabla \log \rho_\sigma(x-x^k_\ell)\right|^2dx\\ & \le C L^\theta\sigma^{d\theta-2}. \end{split}
\end{equation} For the transition layers $t\in (k\tau-\tau',k\tau]$, we note that $u^{\tau,L,\sigma,\tau'}$ satisfies \begin{equation}
	\partial_t u^{\tau,L,\sigma,\tau'} =-b(t)\cdot \nabla u^{\tau,L,\sigma,\tau'}
\end{equation} whenever $b_t$ is a vector field equal to \begin{equation}\label{eq: prototype b}(\tau')^{-1}(y^k_\ell)'\left(\frac{t-k\tau+\tau'}{\tau'}\right)\end{equation} on a neighbourhood of the support of $\rho_\sigma(x-y^k_\ell(t))$. Recalling that each $y^k_\ell, y^k_{\ell'}$ are separated by strictly more than $2\sigma$, while the support of $\rho_\sigma(\cdot - y^k_\ell(t))$ is a ball of radius at most $\frac12\sigma$, we may, by standard elliptic regularity theory, construct $b(t)$ satisfying \eqref{eq: prototype b} on the inner balls of radius $\frac{\sigma}{2}$, such that $b(t)$ is supported only on $\cup_\ell \{|x-y^k_\ell(t)|\le\sigma\}$, with $\nabla \cdot b_t=0$ and enjoying the bound \begin{equation}\label{eq: b bound} \|b(t)\|_\infty \le C (\tau')^{-1}\max_{\ell} \left|(y^k_\ell)'\left(\frac{t-k\tau+\tau'}{\tau'}\right)\right|.\end{equation} Consequently, $u^{\tau,L,\sigma,\tau'}$ solves \begin{equation}
	\partial_t u^{\tau,L,\sigma,\tau'} = -\nabla\cdot(b_t u^{\tau,L,\sigma,\tau'})=\frac12\Delta u^{\tau,L,\sigma,\tau'} - \nabla\cdot( u^{\tau,L,\sigma,\tau'} g)
\end{equation}  where we define \begin{equation}
	g:= \frac12\nabla \log u^{\tau,L,\sigma,\tau'} + b_t.
\end{equation} The first term is already estimated in $L^2_x$ by \eqref{eq: competitor g 1}, while using \eqref{eq: b bound} and noting that the size of the support is at most $CL\sigma^d$, the second has norm at most \begin{equation}
	\label{eq: norm of b} \|b(t)\|_{L^2_x}^2 \le C_L (L+1)\sigma^d (\tau')^{-2}
\end{equation} where $C_L$ depends on $y^k_\ell$ and hence $x^k_\ell$ through bounds on the $|(y^k_\ell)'|$ as in  \eqref{eq: b bound}, but is independent of $\tau', \sigma$. Gathering (\ref{eq: competitor g 1}, \ref{eq: norm of b}), we have, for any $\theta\in (2/d,1)$ and some $C(c,m,L,\theta)<\infty$,  \begin{equation}\label{eq: g small}
	\|g\|_{L^2_{t,x}}^2 \le C(c,m,L,\theta)(\sigma^{d\theta-2}+\tau^{-1}(\tau')^{-1}\sigma^d)
\end{equation} which, for fixed $\tau, L, \tau'$ may be made arbitrarily small by sending $\sigma\to 0$. Together with \eqref{eq: final approximation}, the proof is completed by choosing \begin{equation}
	\label{eq: choose parameters} \tau=\frac{T}{n}; \qquad \tau'=\frac{T}{n^2}; \qquad L=n
\end{equation} and choosing $\sigma=\sigma_n \to 0$ fast enough that \eqref{eq: g small} converges to 0 for this choice of $L,\tau'$.\end{proof}

\section*{Acknowledgements} I am grateful to L. Bertini, B. Fehrman, B. Gess and A. Schlichting for interesting and useful discussions of the problem. This work was supported by a Fellowship of the Royal Commission for the Exhibition of 1851.

\bibliographystyle{plain}
\bibliography{literature}

\begin{thebibliography}{10}

\bibitem{adams2013large}
Stefan Adams, Nicolas Dirr, Mark Peletier, and Johannes Zimmer.
\newblock Large deviations and gradient flows.
\newblock {\em Philos. Trans. R. Soc. Lond. Ser. A Math. Phys. Eng. Sci.},
  371(2005):20120341, 17, 2013.

\bibitem{adams2011large}
Stefan Adams, Nicolas Dirr, Mark~A Peletier, and Johannes Zimmer.
\newblock From a large-deviations principle to the wasserstein gradient flow: a
  new micro-macro passage.
\newblock {\em Communications in Mathematical Physics}, 307(3):791--815, 2011.

\bibitem{ambrosio2004transport}
Luigi Ambrosio.
\newblock Transport equation and cauchy problem for bv vector fields.
\newblock {\em Inventiones mathematicae}, 158(2), 2004.

\bibitem{bendahmane2004renormalized}
Mostafa Bendahmane and Kenneth~H Karlsen.
\newblock Renormalized entropy solutions for quasi-linear anisotropic
  degenerate parabolic equations.
\newblock {\em SIAM journal on mathematical analysis}, 36(2):405--422, 2004.

\bibitem{benois1995large}
O~Benois, C~Kipnis, and C~Landim.
\newblock Large deviations from the hydrodynamical limit of mean zero
  asymmetric zero range processes.
\newblock {\em Stochastic processes and their applications}, 55(1):65--89,
  1995.

\bibitem{bernardin2007hydrodynamics}
C{\'e}dric Bernardin.
\newblock Hydrodynamics for a system of harmonic oscillators perturbed by a
  conservative noise.
\newblock {\em Stochastic processes and their applications}, 117(4):487--513,
  2007.

\bibitem{bernardin2008stationary}
C{\'e}dric Bernardin.
\newblock Stationary nonequilibrium properties for a heat conduction model.
\newblock {\em Physical Review E-Statistical, Nonlinear, and Soft Matter
  Physics}, 78(2):021134, 2008.

\bibitem{bernardin2005fourier}
C{\'e}dric Bernardin and Stefano Olla.
\newblock Fourier's law for a microscopic model of heat conduction.
\newblock {\em Journal of Statistical Physics}, 121(3):271--289, 2005.

\bibitem{bertini2006non}
L.~Bertini, A.~De~Sole, D.~Gabrielli, G.~Jona-Lasinio, and C.~Landim.
\newblock Non equilibrium current fluctuations in stochastic lattice gases.
\newblock {\em J. Stat. Phys.}, 123(2):237--276, 2006.

\bibitem{bertini2025private}
Lorenzo Bertini.
\newblock Private communication, 2025.

\bibitem{bertini2015macroscopic}
Lorenzo Bertini, Alberto De~Sole, Davide Gabrielli, Giovanni Jona-Lasinio, and
  Claudio Landim.
\newblock Macroscopic fluctuation theory.
\newblock {\em Rev. Modern Phys.}, 87(2):593--636, 2015.

\bibitem{bertini2005large}
Lorenzo Bertini, Davide Gabrielli, and Joel~L Lebowitz.
\newblock Large deviations for a stochastic model of heat flow.
\newblock {\em Journal of statistical physics}, 121(5):843--885, 2005.

\bibitem{carinci2024solvable}
Gioia Carinci, Chiara Franceschini, Davide Gabrielli, Cristian Giardin{\`a},
  and Dimitrios Tsagkarogiannis.
\newblock Solvable stationary non equilibrium states.
\newblock {\em Journal of Statistical Physics}, 191(1):10, 2024.

\bibitem{carinci2013duality}
Gioia Carinci, Cristian Giardina, Claudio Giberti, and Frank Redig.
\newblock Duality for stochastic models of transport.
\newblock {\em Journal of Statistical Physics}, 152(4):657--697, 2013.

\bibitem{chen2003well}
Gui-Qiang Chen and Beno{\^\i}t Perthame.
\newblock Well-posedness for non-isotropic degenerate parabolic-hyperbolic
  equations.
\newblock In {\em Annales de l'IHP Analyse non lin{\'e}aire}, volume~20, pages
  645--668, 2003.

\bibitem{crippa2008estimates}
Gianluca Crippa, Camillo De~Lellis, et~al.
\newblock Estimates and regularity results for the diperna-lions flow.
\newblock {\em Journal fur die reine und angewandte Mathematik}, 616:15--46,
  2008.

\bibitem{de2008ordinary}
Camillo De~Lellis.
\newblock Ordinary differential equations with rough coefficients and the
  renormalization theorem of ambrosio [after ambrosio, diperna, lions].
\newblock {\em Ast{\'e}risque}, 317(972):175--203, 2008.

\bibitem{de2024hidden}
Anna De~Masi, Pablo~A Ferrari, and Davide Gabrielli.
\newblock Hidden temperature in the kmp model.
\newblock {\em Journal of Statistical Physics}, 191(11):150, 2024.

\bibitem{derrida2007non}
Bernard Derrida.
\newblock Non-equilibrium steady states: fluctuations and large deviations of
  the density and of the current.
\newblock {\em J. Stat. Mech. Theory Exp.}, (7):P07023, 45, 2007.

\bibitem{diperna1989ordinary}
Ronald~J DiPerna and Pierre-Louis Lions.
\newblock Ordinary differential equations, transport theory and sobolev spaces.
\newblock {\em Inventiones mathematicae}, 98(3):511--547, 1989.

\bibitem{dirr2020conservative}
Nicolas Dirr, Benjamin Fehrman, and Benjamin Gess.
\newblock Conservative stochastic pde and fluctuations of the symmetric simple
  exclusion process.
\newblock {\em arXiv preprint arXiv:2012.02126}, 2020.

\bibitem{donsker1989large}
MD~Donsker and SRS Varadhan.
\newblock Large deviations from a hydrodynamic scaling limit.
\newblock {\em Communications on Pure and Applied Mathematics}, 42(3):243--270,
  1989.

\bibitem{duong2013wasserstein}
Manh~Hong Duong, Vaios Laschos, and Michiel Renger.
\newblock Wasserstein gradient flows from large deviations of many-particle
  limits.
\newblock {\em ESAIM: Control, Optimisation and Calculus of Variations},
  19(4):1166--1188, 2013.

\bibitem{fehrman2019large}
Benjamin Fehrman and Benjamin Gess.
\newblock Large deviations for conservative stochastic pde and non-equilibrium
  fluctuations.
\newblock {\em arXiv preprint arXiv:1910.11860}, 2019.

\bibitem{fehrman2021well}
Benjamin Fehrman and Benjamin Gess.
\newblock Well-posedness of the dean-kawasaki and the nonlinear dawson-watanabe
  equation with correlated noise.
\newblock {\em arXiv preprint arXiv:2108.08858}, 2021.

\bibitem{FG22}
Benjamin Fehrman and Benjamin Gess.
\newblock Non-equilibrium large deviations and parabolic-hyperbolic {PDE} with
  irregular drift.
\newblock {\em Invent. Math.}, 234(2):573--636, 2023.

\bibitem{fehrman2025matching}
Benjamin Fehrman, Benjamin Gess, and Daniel Heydecker.
\newblock Matching large deviation bounds of the zero-range process in the
  whole space.
\newblock {\em arXiv preprint arXiv:2507.23452}, 2025.

\bibitem{feng1997microscopic}
Shui Feng, Ian Iscoe, and Timo Sepp{\"a}l{\"a}inen.
\newblock A microscopic mechanism for the porous medium equation.
\newblock {\em Stochastic Processes and their Applications}, 66(2):147--182,
  1997.

\bibitem{gess2023rescaled}
Benjamin Gess and Daniel Heydecker.
\newblock The porous medium equation: Large deviations and gradient flow with
  degenerate and unbounded diffusion.
\newblock {\em arXiv preprint arXiv:2303.11289}, 2023.

\bibitem{gess2026porous}
Benjamin Gess and Daniel Heydecker.
\newblock The porous medium equation: Multiscale integrability in large
  deviations.
\newblock {\em arXiv preprint arXiv:2602.09547}, 2026.

\bibitem{giacomin1999deterministic}
Giambattista Giacomin, Joel~L. Lebowitz, and Errico Presutti.
\newblock Deterministic and stochastic hydrodynamic equations arising from
  simple microscopic model systems.
\newblock In {\em Stochastic partial differential equations: six perspectives},
  volume~64 of {\em Math. Surveys Monogr.}, pages 107--152. Amer. Math. Soc.,
  Providence, RI, 1999.

\bibitem{giardina2009duality}
Cristian Giardina, Jorge Kurchan, Frank Redig, and Kiamars Vafayi.
\newblock Duality and hidden symmetries in interacting particle systems.
\newblock {\em Journal of Statistical Physics}, 135(1):25--55, 2009.

\bibitem{heydecker2023large}
Daniel Heydecker.
\newblock Large deviations of kac's conservative particle system and energy
  nonconserving solutions to the boltzmann equation: A counterexample to the
  predicted rate function.
\newblock {\em The Annals of Applied Probability}, 33(3):1758--1826, 2023.

\bibitem{jabin2010differential}
Pierre-Emmanuel Jabin.
\newblock Differential equations with singular fields.
\newblock {\em Journal de math{\'e}matiques pures et appliqu{\'e}es},
  94(6):597--621, 2010.

\bibitem{jabin2016critical}
Pierre-Emmanuel Jabin.
\newblock Critical non-sobolev regularity for continuity equations with rough
  velocity fields.
\newblock {\em Journal of Differential Equations}, 260(5):4739--4757, 2016.

\bibitem{karlsen2003uniqueness}
Kenneth~Hvistendahl Karlsen and Nils~Henrik Risebro.
\newblock On the uniqueness and stability of entropy solutions of nonlinear
  degenerate parabolic equations with rough coefficients.
\newblock {\em Discrete and Continuous Dynamical Systems}, 9(5):1081--1104,
  2003.

\bibitem{kim2025spectral}
Seonwoo Kim, Matteo Quattropani, and Federico Sau.
\newblock Spectral gap of the kmp and other stochastic exchange models on
  arbitrary graphs.
\newblock {\em arXiv preprint arXiv:2505.02400}, 2025.

\bibitem{kipnis1998scaling}
Claude Kipnis and Claudio Landim.
\newblock {\em Scaling limits of interacting particle systems}, volume 320.
\newblock Springer Science \& Business Media, 1998.

\bibitem{kipnis1982heat}
Claude Kipnis, Carlo Marchioro, and Errico Presutti.
\newblock Heat flow in an exactly solvable model.
\newblock {\em Journal of Statistical Physics}, 27(1):65--74, 1982.

\bibitem{kipnis1989hydrodynamics}
Claude Kipnis, Stefano Olla, and SRS Varadhan.
\newblock Hydrodynamics and large deviation for simple exclusion processes.
\newblock {\em Communications on Pure and Applied Mathematics}, 42(2):115--137,
  1989.

\bibitem{ladyzhenskaia1968linear}
Olga~Aleksandrovna Ladyzhenskaia, Vsevolod~Alekseevich Solonnikov, and Nina~N
  Ural'tseva.
\newblock {\em Linear and quasi-linear equations of parabolic type}, volume~23.
\newblock American Mathematical Soc., 1968.

\bibitem{lellis2003structure}
Camillo~De Lellis, Felix Otto, and Michael Westdickenberg.
\newblock Structure of entropy solutions for multi-dimensional scalar
  conservation laws.
\newblock {\em Archive for rational mechanics and analysis}, 170(2):137--184,
  2003.

\bibitem{LPT94}
P.-L. Lions, B.~Perthame, and E.~Tadmor.
\newblock A kinetic formulation of multidimensional scalar conservation laws
  and related equations.
\newblock {\em J. Amer. Math. Soc.}, 7(1):169--191, 1994.

\bibitem{mariani2010large}
Mauro Mariani.
\newblock Large deviations principles for stochastic scalar conservation laws.
\newblock {\em Probability theory and related fields}, 147:607--648, 2010.

\bibitem{modena2020convex}
Stefano Modena and Gabriel Sattig.
\newblock Convex integration solutions to the transport equation with full
  dimensional concentration.
\newblock In {\em Annales de l'Institut Henri Poincar{\'e} C, Analyse non
  lin{\'e}aire}, volume~37, pages 1075--1108. Elsevier, 2020.

\bibitem{modena2018non}
Stefano Modena and L{\'a}szl{\'o} Sz{\'e}kelyhidi~Jr.
\newblock Non-uniqueness for the transport equation with sobolev vector fields.
\newblock {\em Annals of PDE}, 4(2):18, 2018.

\bibitem{peletier2014large}
Mark~A Peletier, Frank Redig, and Kiamars Vafayi.
\newblock Large deviations in stochastic heat-conduction processes provide a
  gradient-flow structure for heat conduction.
\newblock {\em Journal of Mathematical Physics}, 55(9):093301, 2014.

\bibitem{P02}
Beno\^{\i}t Perthame.
\newblock {\em Kinetic formulation of conservation laws}, volume~21 of {\em
  Oxford Lecture Series in Mathematics and its Applications}.
\newblock Oxford University Press, Oxford, 2002.

\bibitem{quastel1999large}
Jeremy Quastel, Fraydoun Rezakhanlou, and SR~Srinivasa Varadhan.
\newblock Large deviations for the symmetric simple exclusion process in
  dimensions d$\ge$ 3.
\newblock {\em Probability theory and related fields}, 113(1):1--84, 1999.

\bibitem{QY}
Jeremy Quastel and Horng-Tzer Yau.
\newblock Lattice gases, large deviations, and the incompressible
  {N}avier-{S}tokes equations.
\newblock {\em Ann. of Math. (2)}, 148(1):51--108, 1998.

\bibitem{suzuki1993hydrodynamic}
Yuki Suzuki and K\^{o}hei Uchiyama.
\newblock Hydrodynamic limit for a spin system on a multidimensional lattice.
\newblock {\em Probab. Theory Related Fields}, 95(1):47--74, 1993.

\bibitem{varadhan1997diffusive}
SRS Varadhan and Horng-Tzer Yau.
\newblock Diffusive limit of lattice gas with mixing conditions.
\newblock {\em Asian Journal of Mathematics}.

\end{thebibliography}

\end{document}